\pgfplotsset{compat=1.15}
\theoremstyle{plain}
\newtheorem{thm}{Theorem}[section]
\newtheorem{lemma}[thm]{Lemma}
\newtheorem{corollary}[thm]{Corollary}
\newtheorem{prop}[thm]{Proposition}
\newtoks\prt
\theoremstyle{definition}
\newtheorem{remark}[thm]{Remark}
\newtheorem{definition}[thm]{Definition}
\def\eqn#1$$#2$${\begin{equation}\label#1#2
\end{equation}}
\newcommand{\abs}[1]{\lvert#1\rvert}
\numberwithin{equation}{section}
\def\diam{\operatorname{diam}}
\def\dist{\operatorname{dist}}
\def\loc{\operatorname{loc}}
\def\co{\operatorname{co}}
\def\osc{\operatorname{osc}}
\def\BV{\operatorname{BV}}
\def\L{\mathcal{L}}
\def\P{\mathcal{P}}
\def\G{\mathcal{G}}
\def\H{\mathcal{H}}
\def\Q{\mathcal{Q}}
\def\J{\mathbf{J}}
\def\phi{\varphi}
\def\epsilon{\varepsilon}
\def\eps{\varepsilon}
\def\en{\mathbb N}
\def\wstar{\overset{\ast}{\rightharpoonup}} 
\def\er{\mathbb R}
\def\R{\mathbb R}
\def\bA{\textbf{A}}
\def\bB{\textbf{B}}
\def\oint{-\hskip -13pt \int}
\def\id{\operatorname{id}}
\def\rn{\mathbb R^n}
\newtoks\by
\newtoks\paper
\newtoks\book
\newtoks\jour
\newtoks\yr
\newtoks\pages
\newtoks\vol
\newtoks\publ
\def\ota{{\hbox\vol{???}}}
\def\cLear{\by=\ota\paper=\ota\book=\ota\jour=\ota\yr=\ota
\pages=\ota\vol=\ota\publ=\ota}
\def\endpaper{\the\by, {\the\paper},
\textit{\the\jour} \textbf{\the\vol} (\the\yr), \the\pages.\cLear}
\def\endbook{\the\by, \textit{\the\book}, \the\publ.\cLear}
\def\endprep{\the\by, \textit{\the\paper}, \the\jour.\cLear}
\def\endyearprep{\the\by, \textit{\the\paper}, \the\jour, (\the\yr).\cLear}
\def\name#1#2{#2 #1}
\definecolor{ubqqys}{rgb}{0.29411764705882354,0.,0.5098039215686274}
\definecolor{wwqqzz}{rgb}{0.4,0.,0.6}
\definecolor{qqttzz}{rgb}{0.,0.2,0.6}
\definecolor{ttzzqq}{rgb}{0.2,0.6,0.}
\definecolor{yqqqqq}{rgb}{0.5019607843137255,0.,0.}
\definecolor{uuuuuu}{rgb}{0.26666666666666666,0.26666666666666666,0.26666666666666666}
\definecolor{ttttff}{rgb}{0.2,0.2,1.}
\definecolor{ffqqqq}{rgb}{1.,0.,0.}
\definecolor{qqccqq}{rgb}{0.,0.8,0.}
\definecolor{qqwuqq}{rgb}{0.,0.39215686274509803,0.}
\definecolor{qqzzcc}{rgb}{0.,0.6,0.8}
\definecolor{xfqqff}{rgb}{0.4980392156862745,0.,1.}
\definecolor{qqqqff}{rgb}{0.,0.,1.}
\definecolor{qqffqq}{rgb}{0.,1.,0.}
\def\de0#1{\rule[3pt]{#1}{0.4pt} \hspace{-0.1pt} \rule[3.05pt]{0.05pt}{0.4pt} \hspace{-0.1pt} \rule[3.1pt]{0.05pt}{0.4pt} \hspace{-0.1pt} \rule[3.15pt]{0.05pt}{0.4pt} \hspace{-0.1pt} \rule[3.2pt]{0.05pt}{0.4pt} \hspace{-0.1pt} \rule[3.25pt]{0.05pt}{0.4pt} \hspace{-0.1pt} \rule[3.3pt]{0.05pt}{0.4pt} \hspace{-0.1pt} \rule[3.35pt]{0.05pt}{0.4pt} \hspace{-0.1pt} \rule[3.4pt]{0.05pt}{0.4pt} \hspace{-0.1pt} \rule[3.45pt]{0.05pt}{0.4pt} \hspace{-0.1pt} \rule[3.5pt]{0.05pt}{0.4pt} \hspace{-0.1pt} \rule[3.55pt]{0.05pt}{0.4pt} \hspace{-0.1pt} \rule[3.6pt]{0.05pt}{0.4pt} \hspace{-0.1pt} \rule[3.65pt]{0.05pt}{0.4pt} \hspace{-0.1pt} \rule[3.7pt]{0.05pt}{0.4pt} \hspace{-0.1pt} \rule[3.75pt]{0.05pt}{0.4pt} \hspace{-0.1pt} \rule[3.8pt]{0.05pt}{0.4pt} \hspace{-0.1pt} \rule[3.85pt]{0.05pt}{0.4pt} \hspace{-0.1pt} \rule[3.9pt]{0.05pt}{0.4pt} \hspace{-0.1pt} \rule[3.95pt]{0.05pt}{0.4pt} \hspace{-0.1pt} \rule[4.0pt]{0.05pt}{0.4pt} \hspace{-0.1pt} \rule[4.05pt]{0.05pt}{0.4pt} \hspace{-0.1pt} \rule[4.1pt]{0.05pt}{0.4pt} \hspace{-0.1pt} \rule[4.15pt]{0.05pt}{0.4pt} \hspace{-0.1pt} \rule[4.2pt]{0.05pt}{0.4pt} \hspace{-0.1pt} \rule[4.25pt]{0.05pt}{0.4pt} \hspace{-0.1pt} \rule[4.3pt]{0.05pt}{0.4pt} \hspace{-0.1pt} \rule[4.35pt]{0.05pt}{0.4pt} \hspace{-0.1pt} \rule[4.4pt]{0.05pt}{0.4pt} \hspace{-0.1pt} \rule[4.45pt]{0.05pt}{0.4pt} \hspace{-0.1pt} \rule[4.5pt]{0.05pt}{0.4pt} \hspace{-0.1pt} \rule[4.55pt]{0.05pt}{0.4pt} \hspace{-0.1pt} \rule[4.6pt]{0.05pt}{0.4pt} \hspace{-0.1pt} \rule[4.65pt]{0.05pt}{0.4pt} \hspace{-0.1pt} \rule[4.7pt]{0.05pt}{0.4pt} \hspace{-0.1pt} \rule[4.75pt]{0.05pt}{0.4pt} \hspace{-0.1pt} \rule[4.8pt]{0.05pt}{0.4pt} \hspace{-0.1pt} \rule[4.85pt]{0.05pt}{0.4pt} \hspace{-0.1pt} \rule[4.9pt]{0.05pt}{0.4pt} \hspace{-0.1pt} \rule[4.95pt]{0.05pt}{0.4pt} \hspace{-0.1pt} \rule[5.0pt]{0.05pt}{0.4pt} \hspace{-0.1pt} \rule[5.05pt]{0.05pt}{0.4pt} \hspace{-0.1pt} \rule[5.1pt]{0.05pt}{0.4pt} \hspace{-0.1pt} \rule[5.15pt]{0.05pt}{0.4pt} \hspace{-0.1pt} \rule[5.2pt]{0.05pt}{0.4pt} \hspace{-0.1pt} \rule[5.25pt]{0.05pt}{0.4pt} \hspace{-0.1pt} \rule[5.3pt]{0.05pt}{0.4pt} \hspace{-0.1pt} \rule[5.35pt]{0.05pt}{0.4pt} \hspace{-0.1pt} \rule[5.4pt]{0.05pt}{0.4pt} \hspace{-0.1pt} \rule[5.45pt]{0.05pt}{0.4pt} \hspace{-0.1pt} \rule[5.5pt]{0.05pt}{0.4pt} \hspace{-0.1pt} \rule[5.55pt]{0.05pt}{0.4pt} \hspace{-0.1pt} \rule[5.6pt]{0.05pt}{0.4pt} \hspace{-0.1pt} \rule[5.65pt]{0.05pt}{0.4pt} \hspace{-0.1pt} \rule[5.7pt]{0.05pt}{0.4pt} \hspace{-0.1pt} \rule[5.75pt]{0.05pt}{0.4pt} \hspace{-0.1pt} \rule[5.8pt]{0.05pt}{0.4pt} \hspace{-0.1pt} \rule[5.85pt]{0.05pt}{0.4pt} \hspace{-0.1pt} \rule[5.9pt]{0.05pt}{0.4pt} \hspace{-0.1pt} \rule[5.95pt]{0.05pt}{0.4pt} \hspace{-0.1pt} \rule[6.0pt]{0.05pt}{0.4pt}}	
\def\debst{\mathop{\hspace{5.2pt} ^* \hspace{-11.2pt} \de0{16pt}~}\limits}	
\newcommand{\labeltext}[2]{%
	\@bsphack
	\def\@currentlabel{#1}{\label{#2}}%
	\@esphack
}
\def\step#1#2#3{\par \noindent{{\\ \bf Step~\labeltext{#1}{#3}#1. }{\bf #2. }}}
\begin{document}

\title[Classification of area-strict limits of planar BV homeomorphisms]{Classification of area-strict limits of planar BV homeomorphisms}

\author[D. Campbell]{Daniel Campbell}
\address{D.~Campbell: Department of Mathematics, University of Hradec Kr\' alov\' e, Rokitansk\'eho 62, 500 03 Hradec Kr\'alov\'e, Czech Republic} 
\email{daniel.campbell@uhk.cz}

\author[A. Kauranen]{Aapo Kauranen}
\address{A.~Kauranen: Department of Mathematics and Statistics, University of Jyv\"askyl\"a, PL 35, 40014 Jyv\"askly\"an yliopisto, Finland}
\email{aapo.p.kauranen@jyu.fi}

\author[E. Radici]{Emanuela Radici}
\address{E.~Radici: DISIM - Department of Information Engineering, Computer Science and Mathematics, University of L’Aquila, Via Vetoio 1 (Coppito), 67100 L’Aquila (AQ), Italy}
\email{emanuela.radici@univaq.it}

\thanks{The first author was supported by the grant GACR 20-19018Y}

\subjclass[2010]{Primary 46E35; Secondary 30E10, 58E20}
\keywords{No-crossing condition, homeomorphisms, BV mappings, Strict closure}

\begin{abstract}
	We present a classification of area-strict limits of planar $BV$ homeomorphisms. This class of mappings allows for cavitations and fractures but fulfil a suitable generalization of the INV condition. As pointed out by J. Ball \cite{B}, these features are expected in limit configurations of elastic deformations. In \cite{PP}, De Philippis and Pratelli introduced the \emph{no-crossing} condition which characterizes the $W^{1,p}$ closure of planar homeomorphisms. In the current paper we show that a suitable version of this concept is equivalent with a map, $f$, being the area-strict limit of BV homeomorphisms. This extends our results from \cite{CKR}, where we proved that the \emph{no-crossing BV} condition for a BV map was equivalent with the map being the m-strict limit of homeomorphisms (i.e. $f_k \debst f$ and $|D_1f_k|(\Omega)+|D_2f_k|(\Omega) \to |D_1f|(\Omega)+|D_2f|(\Omega)$). Further we show that the \emph{no-crossing BV} condition is equivalent with a seemingly stronger version of the same condition.
\end{abstract} 

\maketitle
\section{Introduction}

	Over the past few years the classification of weak and strong limits of Sobolev diffeomorphisms has attracted a lot of interest for its relevance to variational models of nonlinear elasticity and geometric function theory. 
	The pioneering work in the area was by Iwaniec and Onninen \cite{IO} followed by the more recent result of De Philippis and Pratelli \cite{PP}. Thanks to these results, the classification of weak and strong Sobolev limits of Sobolev homeomorphisms in the planar setting is now well understood. 
	
	In the case that $p\geq 2$ the authors of \cite{IO} utilise the approximation techniques of \cite{IKO1,IKO2} to prove that it is exactly monotone maps which characterize the weak closure of $W^{1,p}$ homeomorphisms.
	
	The authors of \cite{PP} however, approach the problem using a different technique introduced in \cite{HP} for the diffeomorphic approximation of $W^{1,1}$ homeomorphisms. They prove that the weak and strong closures of $W^{1,p}$ homeomorphisms coincide for all $1 \leq p <\infty$ (assuming uniform integrability in the case of $p=1$). This result was examined in the case that the mappings in question equal identity on the boundary of a square. When $p<2$ the closure contains mappings with discontinuities and so monotonicity is too restrictive to characterize the class. On the other hand, they, quite surprisingly, proved that the INV condition of M\" uller and Spector \cite{MS} is satisfied by maps that cannot be the weak Sobolev limits of homeomorphisms (see \cite[Section 5.2]{PP}) and the situation is not saved even by the restriction of $J_f>0$-a.e. for the limit maps (see \cite[Section 5.3]{PP}).
	
	Therefore, it was necessary to introduce a new condition which the authors called the \emph{no-crossing} condition. 
	In essence a Sobolev mapping belongs to the (weak or strong) closure of homeomorphisms if and only if its restriction to `almost any' grid of horizontal and vertical lines can be uniformly approximated by continuous injective maps. Let us note it was proved in \cite{CPR} that the condition that the injective approximation of the map on a grid cannot be weakened to the condition where the map can be uniformly approximated by injective maps on a single injective Lipschitz curve. One advantage of the no-crossing condition is that it does not require the map to be defined everywhere but only up to a $\mathcal{H}^1$-negligible set. The necessity of this was demonstrated in \cite{PP} where they construct limits of planar Sobolev homeomorphisms presenting cavitations. Therefore, studying the closure of planar homeomorphisms in the $BV$ setting opens itself as a natural question. As was shown in \cite{CHKR} there are maps in the limit class which exhibit more complicated discontinuities, like fractures.
	
	The continuity and invertibility properties of candidates for energy minimizing deformations in elasticity theory were studied in the pioneering works of Ball \cite{B1,B2}. Thanks to the concept of non-interpenetration of matter, it is natural to minimise in classes of homeomorphisms which satisfy certain boundary conditions. In some models, however, the energy functional does not guarantee the existence of a homeomorphic solution and in these cases one is led to find a larger class which contains the limit maps. While doing so, however, one wants the class to be restrictive enough to demand its maps exhibit required behaviour of elastic deformations; for example, the non-interpenetration of the material. One such condition is the INV condition of M\" uller and Spector from \cite{MS}. This is a kind of monotonicity condition which allows for cavitation, a phenomenon which has been observed experimentally in deformed elastic materials (see \cite[Figure 4]{GL}). Ball proposed generalizing the mathematical model so as to allow for both cavitations and fracture-type singularities. This is motivated by several experimental observations, e.g. on ductile fracture of titanium alloys \cite{Petrinic1,Petrinic2} or in \cite{WS,GW} on elastomers, where the experiments suggest that the strains at the cavity surface produced during cavitation are so large that fracture occurs at the same time.

	One approach to the question of modelling deformations allowing for discontinuities was studied by Henao and Mora Corral by introducing a term in the energy functional which penalizes new surface created by the deformations. In the series of results \cite{HMC1, HMC2, HMC3, HMC4} they prove that the minimizers are one-to-one almost everywhere and can exhibit fractures.
	
	Another approach was proposed by the authors and Hencl in \cite{CHKR}. Our motivation was fuelled by interesting $BV$ relaxation results obtained by Kristensen and Rindler \cite{KR} and Rindler and Shaw \cite{RS} for Dirchlet-type boundary conditions, and by Ba\' ia, Kr\" omer and Kru\v z\' ik \cite{BKK} for Neumann-type conditions. Further motivation came from recent results in \cite{PR1} and \cite{PR2} by the third author and Pratelli on the \emph{strict} and \emph{area-strict} approximation of BV homeomorphisms by diffeomorphisms. We remind the reader that a sequence $f_k: \Omega \to \er^n$ of $BV$ functions converges \emph{strictly} to $f \in BV(\Omega, \er^n)$ if $f_k \to f$ in $L^1(\Omega, \er^n)$ and $|Df_k|(\Omega) \to |Df|(\Omega)$. The sequence converges \emph{area-strictly} if it converges strictly and it is possible to decompose $Df_k$ as the sum of two measures $\mu_k + \nu_k$ such that $|\mu_k - D^a f|(\Omega) \to 0$ and $|\nu_k|(\Omega) \to |D^s f|(\Omega)$, where $D^a f$ and $D^s f$ denote the absolutely and the singular part of $Df$ respectively.

	In this paper we study classes of area-strict limits and the so-dubbed `m-strict' limits of $BV$ homeomorphisms. We say that a sequence of $BV$ maps $f_k$ converges m-strictly to $f$ on $\Omega$ if $f_k \to f$ in $L^1(\Omega)$ and
	\begin{equation}\label{MStrictDef}
		|D_1f_k|(\Omega) + |D_2f_k|(\Omega) \to |D_1f|(\Omega)+ |D_2 f|(\Omega).
	\end{equation}
	It is obvious that area-strict convergence implies strict convergence which in turn implies m-strict convergence. It is not hard to construct examples which show that each convergence is sharply weaker than the previous.
	
	In \cite{CHKR} it was shown that strict limits of planar $BV$ homeomorphisms can exhibit cavities and fractures but still preserve a sort of monotonicity property. Although there is not enough topological information to meaningfully generalize the INV condition, a type of topological image can be defined and it was shown in \cite{CHKR} that the intersection of this topological image of two disjoint sets has zero measure. As remarked in \cite{CKR}, the proof of this fact in \cite{CHKR} used only the m-strict convergence (not strict convergence).
	
	It was proved in \cite{CKR} that the m-strict limits of BV homeomorphisms satisfy the $NCBV$ condition (see Definition~\ref{DefNCBV} for the precise definition and the following paragraph for an intuition, it is a generalization of the NC condition of \cite{PP}) and any map satisfying the $NCBV$ condition can be approximated m-strictly by BV homeomorphisms. Given these facts, there were grounds to claim in \cite{CKR} that the class of m-strict limits of homeomorphisms is an appropriate class within which to conduct $BV$ relaxations which are physically relevant in elasticity. On the other hand, as in the Sobolev case strong and weak closure of diffeomorphisms coincide, also in $BV$ the counterpart of strong and weak closure should behave the same way. The strong closure of diffeomorphisms does not say anything meaningful for elasticity, hence the strong counterpart has to be understood in this weaker sense of area-strict convergence. Roughly speaking, area-strict convergence is equivalent to strong convergence in those portions of the domain where the singular part of the derivative is small, while is equivalent to strict convergence where the singular part is concentrated. These classes are also an attractive option since they are much more widely used and studied. We prove that in fact, somewhat surprisingly the classes coincide (although we do not claim that an m-strict converging sequence must also converge strictly). An extra advantage of this result is that being an m-strict limit may be an easier condition to check since one can restrict oneself to behaviour on almost all lines reducing the dimension of the problem.
	
	We shall now endeavour to give the reader a rough idea of what the $NCBV$ condition is. We consider a BV map $f$ defined on the unit square. On almost every horizontal and vertical line the restriction of a $BV$ map is a $BV$ map from the line. It is not hard to prove that for $\L^2$-almost every point $X$ of the domain the restriction of the BV map to the horizontal and vertical line intersecting $X$ is continuous at  $X$. Call $\Gamma$ the union of a finite number of horizontal and vertical lines on each of which the restriction of $f$ is BV and $f_{\rceil \Gamma}$ is continuous at the intersection point of any two lines. There is a countable number of points outside of which $f_{\rceil \Gamma}$ is continuous and each of these points is a jump and lies on exactly one horizontal or vertical segment. We declare the topological image of each point $X\in \Gamma$ to be the segment connecting the one-sided limits of $f_{\rceil\Gamma}$ at $X$. Then the topological image of each of the horizontal and vertical lines is now a Lipschitz curve. We parametrize this Lipschitz curve by some Lipschitz mapping from the grid $\Gamma$ and call this map the geometric representative of $f$. We say that $f$ satisfies the $NCBV$ condition if for every such $\Gamma$ the geometric representative of $f_{\rceil\Gamma}$ can be uniformly approximated by a continuous injective map. See section~\ref{BVonGrid} for our concept of restricting a $BV$ map onto a grid and the precise definition of the NCBV condition. The $NCBV^+$ condition is essentially the same as the $NCBV$ condition but the lines do not have to be only vertical and horizontal, instead, $\Gamma$ is the union of piecewise linear injective paths that intersect each other at most once and the intersection point of any pair is distinct from another pair.
	
	In the following we denote $Q(c,r)$ the square the square centered at $c$ with side $2r$, in particular $Q(0,1)$ denotes $(-1,1)^2$. Our main result is the following:

	\begin{thm}\label{main}
		Let $f\in BV(Q(0,1); Q(0,1))$ and $f(x,y) = (x,y)$ for every $(x,y)\in \partial Q(0,1)$. Then the following conditions are equivalent
		\begin{enumerate}
			\item $f$ satisfies the $NCBV$ condition,
			\item $f$ satisfies the $NCBV^+$ condition,
			\item there exists a sequence $f_k\in BV(Q(0,1), Q(0,1))$ of diffeomorphisms with $f_k = \id$ on $\partial Q(0,1)$ converging to $f$ area-strictly,
			\item there exists a sequence $f_k\in BV(Q(0,1), Q(0,1))$ of diffeomorphisms with $f_k = \id$ on $\partial Q(0,1)$ converging to $f$ weakly in BV and $|D_1 f_k|(Q(0,1)) + |D_2 f_k|(Q(0,1)) \to |D_1 f|(Q(0,1)) + |D_2 f|(Q(0,1))$.
		\end{enumerate}
	\end{thm}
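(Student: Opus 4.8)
The plan is to prove the four-way equivalence along the cycle $(1)\Rightarrow(3)\Rightarrow(4)\Rightarrow(2)\Rightarrow(1)$, in which only the first arrow carries substantial content. Two of the remaining arrows are immediate: a rectangular grid of finitely many horizontal and vertical lines is an admissible configuration for the $NCBV^+$ condition, so every geometric representative appearing in $NCBV$ already appears in $NCBV^+$, giving $(2)\Rightarrow(1)$; and area-strict convergence implies strict, hence (by Reshetnyak-type continuity) m-strict, convergence, so it entails $L^1$ convergence with uniformly bounded total variation together with \eqref{MStrictDef}, and since a diffeomorphism is a homeomorphism this gives $(3)\Rightarrow(4)$.

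For $(4)\Rightarrow(2)$ — which by $(2)\Rightarrow(1)$ also yields $(4)\Rightarrow(1)$ — we observe that $(4)$ presents $f$ as an m-strict limit of BV homeomorphisms fixing $\partial Q(0,1)$, so $f$ satisfies $NCBV$ by \cite{CKR}. To strengthen this to $NCBV^+$ we re-run the restriction argument behind \cite{CKR}: on every admissible piecewise-linear configuration $\Gamma$, the homeomorphisms $f_k$ restrict to continuous injective curves, and m-strict convergence together with a slicing/Fubini argument along $\Gamma$ forces these restrictions to converge uniformly to the geometric representative of $f_{\rceil\Gamma}$, which supplies the required injective approximations. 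The only change relative to the grid case of \cite{CKR} is the bookkeeping of the intersection combinatorics of the paths, which stays under control precisely because the $NCBV^+$ configurations are defined so that any two paths meet at most once and all intersection points are distinct.

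The heart of the proof is $(1)\Rightarrow(3)$, and it requires two upgrades of the construction of \cite{CKR}: from homeomorphisms to diffeomorphisms, and from m-strict to area-strict convergence. The diffeomorphism upgrade is the softer one: by \cite{PR1,PR2} every BV homeomorphism of $Q(0,1)$ equal to the identity on the boundary is an area-strict limit of diffeomorphisms with the same boundary values, and since area-strict convergence is metrizable, a diagonal extraction turns any sequence of BV homeomorphisms converging to $f$ area-strictly into a sequence of diffeomorphisms converging to $f$ area-strictly. So it suffices to produce BV homeomorphisms $g_k=\id$ on $\partial Q(0,1)$ with $g_k\to f$ \emph{area-strictly}, and for this one must re-enter the construction of \cite{CKR}, which controls only $|D_1 g_k|(Q(0,1))+|D_2 g_k|(Q(0,1))$. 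The plan is to localize: given $\delta>0$, pick a compact $K$ carrying all but $\delta$ of $|D^s f|$; on the complement of a small neighbourhood of $K$, replace the crude interpolation of \cite{CKR} by a strong approximation built from blow-ups of $f$ at Lebesgue points of $\nabla f$, so that there $\nabla g_k\to\nabla f$ in $L^1$ and the associated part of the gradient measure supplies the absolutely continuous term $\mu_k$ in the splitting $Dg_k=\mu_k+\nu_k$; near $K$, keep the surgery of \cite{CKR} but sharpen the estimates so that the newly created variation is measured by the \emph{total} variation rather than by the sum of the two directional variations. This last point is exactly where the (generally strict) inequality $|Dg_k|\le|D_1 g_k|+|D_2 g_k|$ must be promoted to an asymptotic equality on the cells one builds: the local building blocks — interpolations running along the Lipschitz image curves prescribed by the geometric representative — should be chosen essentially one-directional on each small cell (only one of $D_1 g_k$, $D_2 g_k$ being appreciable there), so that the Frobenius variation there agrees with the sum of the directional variations; the cells near the jump and Cantor parts then feed only into $\nu_k$, with $|\nu_k|(Q(0,1))\to|D^s f|(Q(0,1))$, while the gluing across the finitely many interfaces produces only $O(\delta)$ of extra variation, which is sent to zero.

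Once $(1)\Rightarrow(3)$ is established the cycle closes, and in particular $NCBV\Leftrightarrow NCBV^+$ drops out as a corollary. The main obstacle — and the reason one cannot simply combine \cite{CKR}, \cite{PR1,PR2} and a diagonal argument — is exactly this area-strict refinement in $(1)\Rightarrow(3)$: an m-strict limit of homeomorphisms carries no information on the total variation of the approximants beyond the two-sided comparison of $|Dg_k|$ with $|D_1 g_k|+|D_2 g_k|$, so matching the full gradient measure of $f$ and its absolutely continuous/singular decomposition requires genuinely new geometric control of the local constructions near the singular set, forcing us to rebuild, rather than invoke, the scheme of \cite{CKR}.
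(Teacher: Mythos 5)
Your cycle $(1)\Rightarrow(3)\Rightarrow(4)\Rightarrow(2)\Rightarrow(1)$ differs from the paper's $(3)\Rightarrow(4)\Rightarrow(1)\Rightarrow(2)\Rightarrow(3)$, and the two easy arrows you list — $(2)\Rightarrow(1)$ and $(3)\Rightarrow(4)$ — are correct and match the paper. But the remaining two arrows both bury the paper's genuinely new content, and neither of your sketches closes the gap.

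For $(4)\Rightarrow(2)$ you deduce $NCBV$ from \cite{CKR} and then claim $NCBV^{+}$ follows by ``re-running the restriction argument,'' with only ``bookkeeping'' needed for the intersection combinatorics. This substantially misrepresents the difficulty. The slicing/Fubini machinery behind \cite{CKR} controls $f_k$ on \emph{cardinal} lines precisely because m-strict convergence is convergence of $|D_1 f_k|+|D_2 f_k|$; on a segment in a generic direction $\tau$ the relevant quantity is $|\langle Df_k,\tau\rangle|$, for which m-strict convergence gives lower semicontinuity and a crude upper bound but \emph{not} $\limsup_k|\langle Df_k,\tau\rangle|\le|\langle Df,\tau\rangle|$, so uniform convergence of the sliced maps on non-cardinal segments does not follow. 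This is exactly why the paper proves $(1)\Rightarrow(2)$ as a separate theorem (Theorem~\ref{ItsAllLies}) that never returns to the approximating sequence $f_k$: it works directly with $f$, replacing the crossing points of the polyline grid by carefully truncated square spirals (Proposition~\ref{Crossroads}), replacing small-jump arcs by boundaries of tiny rectangles with controlled tangential variation (Lemma~\ref{NoName}), and dealing with the finitely many large jumps via the Lebesgue-point structure of $D^j f$ and an axis-parallel detour near each jump. Nothing in that argument is ``bookkeeping.''

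For $(1)\Rightarrow(3)$ your localization and one-directional-cell idea is in the right spirit (it is close to the paper's use of Alberti's rank-one theorem and the componentwise extension from \cite{CKRExt}), but it is incompatible with using $NCBV$ alone. The direction $v_i$ along which $D^sf$ concentrates on a small cell is arbitrary, not cardinal, and the estimates \eqref{SingEst1}--\eqref{SingEst2} require the cell boundaries to be transversal to $v_i$ and $v_i^\perp$. The paper achieves this by shifting the vertices of the dyadic squares (Theorem~\ref{KulovyBlesk}), which necessarily tilts the sides and produces a \emph{non-straight} grid; this is precisely why the construction is run under hypothesis $(2)$, not $(1)$. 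If you try to keep horizontal and vertical cell boundaries, the case $v_i\in\{(1,0),(0,1)\}$ (e.g.\ a jump concentrated along a horizontal line) breaks the slicing estimate. So as written, your $(1)\Rightarrow(3)$ needs $(2)$ as input, which reintroduces the very arrow you claimed to bypass. The logical cycle would still close if you proved $(1)\Rightarrow(2)$ first, but then you have reproduced the paper's structure, not avoided it.
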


	\subsection{Overview of the proof}
	Since (3) implies (4) is obvious and since (4) implies (1) has been proved in \cite{CKR}, there are two implications to prove, i.e. (1) implies (2) and (2) implies (3).
	
	To prove (1) implies (2) we assume that we have a good non-straight grid for $f$ called $\Gamma$. Our aim is to create a system of horizontal and vertical segments on which $f$ remains close to $f$ on $\Gamma$. The non-straight grid $\Gamma$ is the union of $\gamma_i([0,1])$, where $\gamma_i$ are piecewise linear, injective and continuous. We call $X_{i,j}$ the unique element of $\gamma_i([0,1])\cap \gamma_{j}([0,1])$ whenever the intersection is non-empty.
	
	Our first step is to eliminate the intersection points $X_{i,j}$ of $\Gamma$. These points are chosen so that $\lim_{r\to 0}r^{-1}|Df|(Q(X_{i,j},r)) = 0$. We choose a small $r>0$ and 4 disjoint spirals (see Figure~\ref{Fig:Spiral}) with which we replace part of each segment of $\Gamma$ ending at $X_{i,j}$. By choosing the lines of the spiral carefully and by choosing the $r$ small enough we guarantee that the oscillation of $f_{\rceil \Gamma\cap Q(X_{i,j},r)}$ and the oscillation of $f$ on the spiral are both much smaller than some $\sigma$. Then any injective approximation of $f$ on the spiral with error $\sigma /2$ is also an injective approximation of $f$ on $\Gamma \cap Q(X_{i,j},r)$ with error bounded by $\sigma$. This step is Proposition~\ref{Crossroads}.
	
	For all $X\in \Gamma$ except for a finite number of points where there are `large' jumps of $f_{\rceil \Gamma}$ we have $\limsup_{r\to 0}r^{-1}|Df|(Q(X,r))\ll \sigma$. We can then find a rectangle $R_X$ such that the oscillation of $f$ on $\Gamma \cap R_X$ and the oscillation of $f$ on $\partial R_X$ are both much smaller than $\sigma$. Then any injective approximation of $f$ on an appropriate part of $\partial R_X$ with error $\sigma /2$ is also an injective approximation of $f$ on $\Gamma \cap R_X$ with error bounded by $\sigma$. This step is conducted in the proof of Theorem~\ref{ItsAllLies} by applying Lemma~\ref{NoName}.
	
	It then remains to deal with the finite number of `large' (say larger than $\frac{\sigma}{40}$) jumps of $f_{\rceil \Gamma}$. But these jump points are Lebesgue points of the polar decomposition of $D^jf$ and so it is not hard to find a point very close by which has a very similar jump and construct a piece-wise horizontal and vertical path with endpoints on $\Gamma$ and doing basically just the same jump as $f$. See Figure~\ref{Fig:JumpPoints} for a depiction of how we do this. This step is conducted in the proof of Theorem~\ref{ItsAllLies}. After having done the above 3 steps we have an admissible set of horizontal and vertical segments from which we generate a good straight grid $\tilde{\Gamma}$.
	
	At this point it is easy to generate an injective approximation of the geometric representative of $f$ with error $\sigma$ using the $NCBV$ condition with error sufficiently smaller than $\sigma$ and finding an appropriate correspondence of a subset of $\tilde{\Gamma}$ and $\Gamma$.
	
	The proof that (2) implies (3) is quite involved. Some techniques developed in \cite{CKR} are also implemented here. In comparison with the approximation in \cite{CKR} we have to approximate $D^af$ in $L^1$ and in order to do so we have to use the techniques developed in \cite{HP} (and following papers like \cite{C}). Further the m-strict convergence in \cite{CKR} is strictly weaker than even strict convergence and therefore it is necessary to find a better way to approximate the singular part of the derivative. The new extension result needed for this approximation has been developed in \cite{CKRExt}. It is a rotated version of the main result in \cite{PR1}, not only for rectangles but also for convex polygons. Let us now give a short sketch of how we use these results for the proof.
	
	First we find a small set supporting the vast majority of $|D^sf|$. By dividing this set into very small squares and using a Lebesgue point-type argument for the polar decomposition of $D^sf$ we can apply Theorem~\ref{Componentwise Extension} which is taken from \cite{CKRExt}. Thanks to Alberti's rank-one theorem and the Lebesgue point-type estimate, for each square $Q_i$ we have a unit vector $v_i$ such that $D^sf_{\rceil Q_i}$ is very close to $u_i\otimes v_i |D^sf_{\rceil Q_i}|$ for some unit vector $u_i$. This enables us to make estimates like $ |Df|(Q_i)\leq(1+\epsilon) |D^sf|(Q_i) \leq (1+2\epsilon)|\langle D^sf, v_i\rangle|(Q_i)$ which means that Theorem~\ref{Componentwise Extension} gives an estimate of the energy of our approximation on $Q_i$ by $(1+C\epsilon)|D^sf|(Q_i)$.
	
	From now on we work on the rest of $Q(0,1)$ where the energy of $|D^sf|$ is already very small. We separate this set into very small squares and use a Lebesgue point-type argument for the derivatives to find 4 sets, call them $A_1,A_2,A_3, A_4$. We have that $|Df|(A_1)$ is very small because on every square in $A_1$ we have that $|Df|(Q_i) \leq \epsilon \L^2(Q_i)$. The set $A_2$ is very small and so $|Df|(A_2)$ is also very small. The set $A_3$ is made of squares where $f$ is very close to a nice affine mapping and $A_4$ is made of squares where $f$ is very close to an affine mapping with zero Jacobian. We slightly shift the vertexes of the squares so that the behaviour of $f$ on the boundary of the resulting convex quadrilaterals is similar to the behaviour of $f$ on the quadrilateral (by the BV on lines characterization). This allows us to use the extension theorems from \cite{HP} on quadrilaterals in $A_1$ and $A_2$ (Theorem~\ref{HPExt}), and $A_3$ (Theorem~\ref{NullExt}) while in $A_4$ it is enough to do a straight forward triangularisation. There is then no problem in proving that the derivative of our approximation is close to $D^af$ in $L^1$.

\section{Preliminaries}

\subsection{Extension Theorems}
The following is from \cite[Theorem 2.1]{HP}.
\begin{thm}\label{HPExt}
	Let $\phi : \partial Q \to \er^2$ be a piecewise linear and one-to-one function. There is
	a finitely piecewise affine homeomorphism $g : Q \to \er^2$ such that $g = \phi$ on $\partial Q$, and
	$$
		\int_{Q}|Dg| \leq C\diam Q \int_{\partial Q}|D_{\tau}\phi|.
	$$
\end{thm}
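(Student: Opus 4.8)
After composing with the affine bijection carrying $Q$ onto a fixed square --- under which both sides of the claimed inequality scale by the same factor --- we may assume $Q$ is that square. Since $\phi$ is piecewise linear and one-to-one, $P:=\phi(\partial Q)$ is a polygonal Jordan curve, say with $N$ vertices; then $\int_{\partial Q}|D_\tau\phi|=\mathcal{H}^1(P)=:L$, and $\diam U\le L/2$, where $U$ is the bounded component of $\R^2\setminus P$, and any homeomorphism $g\colon Q\to\R^2$ with $g|_{\partial Q}=\phi$ is forced to map $Q$ onto $U$. So the target region is fixed and the only freedom is the interior values of $g$; the whole content is the energy estimate. The natural first move is: triangulate $\overline U$ without Steiner points (the two--ears theorem); observe that the resulting $N-3$ diagonals are pairwise non-crossing, hence pairwise non-interleaving in the cyclic order of the vertices of $P$; transport that cyclic order to $\partial Q$ through $\phi$ and realise the diagonals by $N-3$ pairwise non-crossing \emph{straight} chords of $Q$ --- which, being $N-3$ non-crossing chords, necessarily triangulate the convex polygon $Q$ into triangles $Q_1,\dots,Q_{N-2}$; and let $g$ be, on each $Q_k$, the affine bijection onto the corresponding triangle $T_k$ of $\overline U$. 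Gluing is consistent along shared edges, so $g$ is a finitely piecewise affine homeomorphism with $g=\phi$ on $\partial Q$.

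The trouble is that this does not give the asserted bound: one only gets $\int_Q|Dg|=\sum_k|Q_k|\,|A_k|$, where $A_k$ is the constant gradient on $Q_k$ and $|A_k|\lesssim\diam T_k\,\diam Q_k/|Q_k|$, hence $\int_Q|Dg|\lesssim\diam Q\sum_k\diam T_k$; but $\sum_k\diam T_k$ need not be comparable to $L$, since a long thin winding polygon with many boundary vertices forces every triangulation to consist of many small triangles whose diameters add up to far more than $L$. The reason is structural: a badly distorting $\phi$ makes $g$ badly distorting on $\partial Q$, but the interior values of $g$ should be free to \emph{relax} that distortion. So the construction I would actually use builds a collar $\{(x,y)\in Q\colon\dist((x,y),\partial Q)<\delta\}$ with $\delta$ comparable to $\diam Q$, in which $g$ is defined piecewise affinely and injectively so as to interpolate along ``radial'' segments between $\phi$ on $\partial Q$ and a \emph{well-behaved} piecewise affine parametrisation, on the inner boundary of the collar, of a polygonal Jordan curve $P'\subset U$ much nicer than $P$ --- ideally convex, or at least of controlled length $L'\lesssim L$ bounding a region $U'$ that the collar can homeomorphically sweep onto $U\setminus\overline{U'}$. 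The collar then costs $\lesssim\diam Q\cdot L$ (its area is $\sim\delta\,\diam Q$, its radial derivative is $\lesssim\diam U/\delta\lesssim L/\delta$, and its tangential derivative integrates to $\lesssim\delta(L+L')$), and the remaining inner square, carrying now the good data $P'$, is handled by the crude triangulation --- or, if $P'$ is convex, by a direct radial construction --- again at cost $\lesssim\diam Q\cdot L$.

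The main obstacle, and what I expect to be the technical heart, is the construction of $P'$ and of the collar sweep when $U$ is \emph{non-convex}: there is no canonical ``curve just inside $P$'', the radial segments of the collar must be chosen to remain pairwise disjoint and to cover $U\setminus\overline{U'}$, and all of this must be done with the constant $C$ \emph{absolute}, i.e.\ independent of $N$ and of how violently $\phi$ distorts $\partial Q$. Concretely I would produce $P'$ by an economical recursive cutting of $U$ in which each newly created edge is charged against a proportional, previously uncharged stretch of $\partial U$ that the cut turns into interior; making this charging precise while preserving injectivity and the finitely piecewise affine structure throughout is where the real work lies --- this is, in essence, the argument of \cite{HP}.
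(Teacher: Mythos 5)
First, a point of reference: the paper does not prove this statement at all — it quotes it verbatim as \cite[Theorem 2.1]{HP} and uses it as a black box. So there is no proof of the paper's own to compare against, and what you have written must be judged as a standalone proof attempt.

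As such, it is an informed and well-reasoned sketch but not a proof, and you essentially say so yourself. The reduction to a fixed square is fine, as is the identification $\int_{\partial Q}|D_\tau\phi|=\H^1(P)$ and the observation that any homeomorphic extension of $\phi$ must fill exactly the Jordan domain $U$. Your critique of the naive no-Steiner-point triangulation is the right instinct — one cannot in general control $\sum_k\diam T_k$ by $L$ with an absolute constant, which is precisely why the Hencl--Pratelli argument is not a one-paragraph affair. The collar strategy (linear interpolation between $\phi$ and a well-behaved inner parametrisation, cost $\lesssim\diam Q\,(L+L')$ plus a controllable inner region) is also the correct shape of an argument, and your back-of-envelope estimate of the collar cost is consistent. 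But the proof stops exactly where it would have to begin: you do not construct the inner polygon $P'$ with $L'\lesssim L$, you do not show that the interpolating ``radial'' segments can be chosen pairwise disjoint and to tile $U\setminus\overline{U'}$ when $U$ is badly non-convex, you do not verify that the interpolation can be made finitely piecewise affine and injective, and you do not explain how the inner region bounded by $P'$ is then handled at cost $\lesssim\diam Q\cdot L$ (if $P'$ is only controlled in length, not convex, the same triangulation objection you raised for $P$ reappears). Closing with ``making this charging precise \dots\ is where the real work lies --- this is, in essence, the argument of \cite{HP}'' is an honest description of the situation, but it concedes that the genuinely hard steps — the recursive cutting with a length-charging argument, and the injective piecewise-affine collar sweep — are deferred rather than done. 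Those are exactly the content of the theorem, so the proposal has a real gap.
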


The following theorem is \cite[Theorem 3.7]{C}. The question is how to approximate a map which is close to a degenerate linear map $\Phi$ (up to a rotation in the pre-image we may assume that $\Phi = \left(\begin{matrix}
d \; 0\\
0 \; 0\\
\end{matrix}\right)$).

\begin{thm}\label{NullExt}
	Let $d> \delta >0$, let $r_0 \in(0,1)$ and let $\Q$ be a convex set and the image of $[0,r_0]^2$ in a $2$-bi-Lipschitz mapping which is equal to an affine mapping on $\co\{(0,0), (0,r_0),(r_0,0)\}$ and $\co\{(r_0,r_0), (0,r_0),(r_0,0)\}$. Then for every $\varphi:\partial Q\to\er^2$ finitely piece-wise linear and one-to-one mapping with
	\begin{equation}\label{bound1}
	\int_{\partial \Q} \Big|D_{\tau} \varphi(t)-\left(\begin{matrix}
	d \; 0\\
	0 \; 0\\
	\end{matrix}\right)\tau\Big| \; d\H^1(t)< \delta r_0 ,
	\end{equation}
	and $\|D_{\tau}\varphi\|_{L^{\infty}(\partial Q)}\leq d+2\delta$,
	there exists a finitely piece-wise affine homeomorphism $g:Q\to \er^2$ such that $g=\varphi$ on $\partial \Q$ and
	\begin{equation}\label{CoolAssEquation}
	\Big\| Dg(x)-\left(\begin{matrix}
	d \; 0\\
	0 \; 0\\
	\end{matrix}\right)\Big\|_{L^1(\Q)} < C\delta r_0^2 .
	\end{equation}
\end{thm}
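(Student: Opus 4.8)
Write $\Phi=\left(\begin{matrix}d & 0\\0 & 0\end{matrix}\right)$ for the degenerate linear map in \eqref{bound1}--\eqref{CoolAssEquation}. The plan is to construct $g$ essentially by hand: Theorem~\ref{HPExt} only controls $\int_{Q}|Dg|$, whereas \eqref{CoolAssEquation} demands the finer closeness $\|Dg-\Phi\|_{L^1(\Q)}\lesssim\delta r_0^2$, and $\Phi$ is not invertible, so no extension result can be quoted directly. I would first normalize: integrating \eqref{bound1} along boundary arcs gives $|(\varphi-\Phi)(p)-(\varphi-\Phi)(q)|<\delta r_0$ for all $p,q\in\partial\Q$, so, after subtracting a constant (which affects neither the piecewise linearity and injectivity of $\varphi$ nor $Dg$), we may assume $\|\varphi-\Phi\|_{L^\infty(\partial\Q)}\le\delta r_0$. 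Since $\diam\Q\le 2\sqrt2\,r_0$ and $\Phi$ kills the vertical direction, $\varphi(\partial\Q)$ then lies in a horizontal band of height $2\delta r_0$; moreover $(\Phi\tau)_2=0$ on every edge of the polygon $\partial\Q$, so by \eqref{bound1} the vertical coordinate of $\varphi$ along each edge has total variation $<\delta r_0$. As one traverses the convex polygon $\partial\Q$ the first coordinate $x$ increases then decreases, so $\varphi(\partial\Q)$ is a thin ``stadium'': a Jordan curve running once to the right and once back inside the band, with two nearly horizontal strands that cannot cross. Let $U$ be the thin ``tube'' it bounds (by hypothesis $\Q$ is a convex quadrilateral of controlled shape, so $\partial\Q$ is polygonal and all maps below stay finitely piecewise affine).

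The tempting shortcut is to extend $(\varphi-\Phi)|_{\partial\Q}$ by Theorem~\ref{HPExt} to a homeomorphism $e$ and set $g=\Phi+e$; then $\|Dg-\Phi\|_{L^1(\Q)}=\int_{\Q}|De|\le C\diam\Q\int_{\partial\Q}|D_\tau\varphi-\Phi\tau|\lesssim\delta r_0^2$ straight from \eqref{bound1}. This fails twice over: $\varphi-\Phi$ need not be one-to-one on $\partial\Q$ (precisely where $\varphi$ locally agrees with an affine map), so Theorem~\ref{HPExt} does not apply; and, even when it does, $\Phi+e$ need not be injective, since $\Phi$ annihilates vertical differences and so identifies two points of a vertical segment on which $e$ is nearly constant. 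Both defects are ``small'', which suggests repairing them on a thin collar.

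So I would split $\Q=(\Q\setminus\Q_\rho)\cup\overline{\Q_\rho}$ with $\Q_\rho=\{z\in\Q:\dist(z,\partial\Q)>\rho r_0\}$ and $\rho$ a small multiple of $\delta/d$. On the collar, interpolate linearly in the distance variable $\dist(\cdot,\partial\Q)\in[0,\rho r_0]$ from $\varphi$ on $\partial\Q$ to a perturbed, genuinely injective datum $\varphi^{\ast}$ on $\partial\Q_\rho$ chosen so that $\varphi^{\ast}-\Phi$ is one-to-one and $\varphi^{\ast}(\partial\Q_\rho)=\partial U'$ for a thin sub-tube $U'\Subset U$; this is cheap, because on the collar the tangential part of $Dg$ is $\le d+2\delta$ by hypothesis, the radial part is $\lesssim\delta/\rho$, and the collar has area $\lesssim\rho r_0^2\sim\delta r_0^2/d$, so $\int_{\Q\setminus\Q_\rho}(|Dg|+|\Phi|)\lesssim\delta r_0^2$. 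On $\Q_\rho$, map onto $U'$ by a finitely piecewise affine homeomorphism that sends each vertical slice $\{x=c\}\cap\Q_\rho$ onto the corresponding cross-slice of $U'$ with first coordinate $\approx dx$; since both strands of $U$ have vertical total variation $<\delta r_0$ (again \eqref{bound1}) and $U$ has width $\le 2\delta r_0$, one gets $\int_{\Q_\rho}(|\partial_x g_1-d|+|\partial_x g_2|+|\partial_y g_1|+|\partial_y g_2|)\lesssim\delta r_0^2$. Patching the two maps along $\partial\Q_\rho$ gives a finitely piecewise affine homeomorphism $g$ with $g=\varphi$ on $\partial\Q$ and the bound \eqref{CoolAssEquation}, bijective onto $U$ since the collar covers $U\setminus\overline{U'}$ once and $\Q_\rho$ covers $U'$ once. (One can equally organize this cell by cell on a fine grid, in the spirit of \cite{HP}.)

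The main obstacle is the tension between the two requirements: $g$ must be a \emph{homeomorphism} onto the thin tube $U$, yet $Dg$ must stay $O(\delta)$-close to $\Phi$ in $L^1$. Injectivity forces a genuine, if tiny, spreading in exactly the direction $\Phi$ collapses, and one must check that this spreading --- together with the mild ``wiggle'' of $U$ and the contribution of the small part of $\partial\Q$ where $D_\tau\varphi$ is far from $\Phi\tau$ --- costs only $O(\delta r_0^2)$ and not $O(r_0^2)$; this is exactly why the norm-only estimate of Theorem~\ref{HPExt} is not enough and a dedicated construction is needed. Concretely, the technical heart is the construction of the sub-tube $U'$ and of a controlled, finitely piecewise affine parametrization of the annular region $U\setminus\overline{U'}$ realizing the collar map --- a PL ``foliation'' of the thin Jordan domain nesting $\partial U'$ strictly inside $\varphi(\partial\Q)$ --- after which the estimate \eqref{CoolAssEquation} is the bookkeeping of the $O(\delta r_0^2)$ error terms above.
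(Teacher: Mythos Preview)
The paper does not prove this statement; it is quoted as \cite[Theorem~3.7]{C}, so there is no argument here to compare against directly. Your overall plan --- normalize so that $\varphi(\partial\Q)$ lies in a horizontal band of height $O(\delta r_0)$, then build $g$ via a thin collar absorbing the boundary data plus an interior slicing by vertical lines --- is the right geometric picture, and the $O(\delta r_0^2)$ bookkeeping you outline for both pieces is correct. You also correctly diagnose why neither Theorem~\ref{HPExt} applied to $\varphi$ nor the shortcut $g=\Phi+e$ with $e$ extending $\varphi-\Phi$ can work.

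The gap is precisely where you locate it, but it is more substantial than your last paragraph lets on. The collar map ``linear interpolation in $\dist(\cdot,\partial\Q)$ from $\varphi$ to $\varphi^*$'' is not automatically injective: both Jordan curves $\varphi(\partial\Q)$ and $\varphi^*(\partial\Q_\rho)$ sit in the same $O(\delta r_0)$-band, and the radial push between them has no a priori reason to avoid self-intersection. Likewise, ``the corresponding cross-slice of $U'$'' presupposes that each vertical line meets $\partial U'$ in exactly two points, i.e.\ that the first coordinate of $\varphi^*$ is monotone along each strand of $\partial\Q_\rho$; this must be engineered into the choice of $\varphi^*$, not assumed, and the near-monotonicity of $\varphi_1$ you extract from \eqref{bound1} does not give it directly. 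The actual construction in \cite{C} sidesteps both issues by working cell-by-cell on a fine grid of thin vertical strips (your own parenthetical alternative), where injectivity becomes a local check on each strip and no global collar or sub-tube is needed. Your global picture can certainly be made rigorous, but turning it into a finitely piecewise affine homeomorphism requires exactly the PL-foliation argument you defer, and that argument is the substance of the proof rather than a detail to be filled in afterwards.
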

	Let $\Q_i$ be a convex quadrilateral. For a unit vector $v_i$ we denote $\pi_{v_i}(x) = v_i^{\bot}\langle x,v_i^{\bot}\rangle = x - v_i\langle x,v_i\rangle $ and $\pi_{v_i^{\bot}}(x) = v_i\langle x,v_i\rangle = x - v_i^{\bot}\langle x,v_i^{\bot}\rangle $. For each $X\in \pi_{v_i}(\operatorname{int}\Q_i)$ there are exactly two distinct points $X_*, X^*$ in $\partial \Q_i$ such that $\pi_{v_i(X_*)} =  \pi_{v_i(X^*)} =  X$ and similarly $\pi_{v_i^{\bot}}(Z_*) = \pi_{v_i^{\bot}}(Z^*) = Z \in \pi_{v_i^{\bot}}(\Q_i)$. To be specific we denote $X_*, X^*$ so that $\langle X_*, v_i\rangle <\langle X^* , v_i\rangle$ and $Z_*, Z^*$ so that $\langle Z_*, v_i^{\bot}\rangle <\langle Z^* , v_i^{\bot}\rangle$.
	Given $\phi: \partial \Q \to \R^2$ a continuous injective map, we call $\P$ the bounded connected component of $\R^2 \setminus \phi(\partial \Q)$ identified by the Jordan curve $\phi(\partial \Q)$. For any pair of points $\bA,\bB \in \P$ we denote by
	\begin{equation}\label{dDef}
	d_{\P}(\bA,\bB) \text{ the geodesic distance between $\bA$ and $\bB$ inside $\P$. }
	\end{equation}
	The following is \cite[Theorem 1.2]{CKRExt}.

	\begin{thm}\label{Componentwise Extension}
		Let $\theta \in [0,2\pi)$ be fixed and let $v_{\theta} = (\cos\theta, \sin\theta)$, $\Q \subset \R^2$ be a convex polygon and $\phi: \partial \Q \to \R^2$ be a continuous piecewise linear injective map.  Then for every $\eps >0$ there exists a finitely piecewise affine homeomorphism $g: \Q \to \R^2$ extending $\phi$, such that 
		\begin{equation}\label{eqComponentwise extension}
			\begin{aligned}
				| \langle Dg , v_{\theta}\rangle|(\Q) &\leq\int_{\pi_{v_i}(\Q)} d_\P (\phi(X^*), \phi(X_*)) d\H^1(X)+ \epsilon \text{ and }\\
				| \langle Dg , v_{\theta}^{\bot}\rangle|(\Q) &\leq  \int_{\pi_{v_i^{\bot}}(\Q)} d_\P (\phi(Z^*), \phi(Z_*)) d\H^1(Z)+ \epsilon . 
			\end{aligned}
		\end{equation}
	\end{thm}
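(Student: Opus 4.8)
The plan is to construct $g$ so that the images under $g$ of the chords of $\Q$ parallel to $v_\theta$, resp.\ parallel to $v_\theta^\bot$, are in an integrated sense no longer than the corresponding geodesics inside $\P$; since the $g$ we build is continuous, the two bounds in \eqref{eqComponentwise extension} then follow from the one-dimensional slicing of $|Dg|$ in those two directions. Replacing $\Q,\phi$ by $R^{-1}\Q,\ \phi\circ R$ with $R$ the rotation sending $e_1$ to $v_\theta$, the target curve $\phi(\partial\Q)=\partial\P$ and the values of the two integrals in \eqref{eqComponentwise extension} are unchanged while $v_\theta$ is replaced by $e_1$, so we may assume $v_\theta=e_1$, $v_\theta^\bot=e_2$. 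Since $\phi$ and the right-hand sides of \eqref{eqComponentwise extension} depend continuously on the finitely many vertices of $\Q$ and of the polygonal Jordan curve $\phi(\partial\Q)=\partial\P$, after an arbitrarily small perturbation of these (absorbed into $\eps$) we may also assume that $\Q$ has a unique lowest vertex $P^-$ and a unique highest vertex $P^+$ and that no two vertices of $\partial\P$ have the same height.

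For $y$ in the interior of $I:=\pi_{e_1}(\Q)$ the horizontal line at height $y$ meets the convex polygon $\Q$ in a segment with left endpoint $X_*(y)$ and right endpoint $X^*(y)$, and $\phi(P^-),\phi(P^+)$ split $\partial\P$ into the arcs $L=\{\phi(X_*(y)):y\in I\}$ and $R=\{\phi(X^*(y)):y\in I\}$. Let $\gamma_y\subset\overline\P$ be the geodesic, in the length metric of $\overline\P$, from $\phi(X_*(y))$ to $\phi(X^*(y))$; it is a polygonal arc with $\H^1(\gamma_y)=d_\P(\phi(X_*(y)),\phi(X^*(y)))=:\ell(y)$. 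Because $\phi$ preserves the cyclic order of $\partial\Q$ and the chords $[X_*(y),X^*(y)]$ are ordered by height, the $\gamma_y$ are pairwise non-crossing, and one checks that they cover $\overline\P$; moreover $y\mapsto\ell(y)$ is continuous and of bounded variation on $I$, since the combinatorial type of $\gamma_y$ changes only finitely often and $\ell$ is smooth between consecutive changes. Fix a large $N$ and a partition $y_0<\dots<y_N$ of $I$ so fine that the variation of $\ell$ and the Hausdorff oscillation of $y\mapsto\gamma_y$ on each $[y_{i-1},y_i]$ are $<\eps/N$. After replacing each $\gamma_{y_i}$ by a polygonal arc with the same endpoints that is $\eps/N$-close to it — this is where $\eps>0$ is genuinely needed, as distinct geodesics may share subarcs — we may also arrange that the closed regions $S_i\subset\overline\P$ bounded by $\gamma_{y_{i-1}}$, $\gamma_{y_i}$ and the sub-arcs of $L$ and $R$ between their endpoints have pairwise disjoint interiors, cover $\overline\P$, and have polygonal Jordan boundaries.

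Tiling the horizontal strip $\Q\cap\{y_{i-1}\le x_2\le y_i\}$ by finitely many quadrilaterals and applying Theorem~\ref{HPExt} to each (with consistent piecewise-linear data on the internal edges), we obtain a finitely piecewise affine homeomorphism $g_i$ of this strip onto $S_i$ which equals $\phi$ on $\partial\Q$ and equals fixed piecewise-linear parametrisations of $\gamma_{y_{i-1}},\gamma_{y_i}$ on the horizontal edges $\{x_2=y_{i-1}\}$ and $\{x_2=y_i\}$. Since the $g_i$ agree on the shared edges, they glue to a finitely piecewise affine homeomorphism $g:\Q\to\P$ extending $\phi$. By one-dimensional slicing, $|\langle Dg,e_1\rangle|(\Q)=\int_I\H^1\big(g(\{x_2=y\}\cap\Q)\big)\,dy$, and for $y\in[y_{i-1},y_i]$ the arc $g(\{x_2=y\}\cap\Q)$ lies in the thin region $S_i$ and joins $L$ to $R$, hence has length at most $\ell(y_{i-1})+\eps/N\le\ell(y)+2\eps/N$; integrating over $I$ and letting $N\to\infty$ gives $|\langle Dg,e_1\rangle|(\Q)\le\int_I\ell(y)\,dy+\eps$, which is the first inequality in \eqref{eqComponentwise extension}.

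The second inequality is the delicate point. Again by slicing, $|\langle Dg,e_2\rangle|(\Q)=\int_{\pi_{e_2}(\Q)}\H^1\big(g(\{x_1=s\}\cap\Q)\big)\,d\H^1(s)$, and $g(\{x_1=s\}\cap\Q)$ is a transversal of the foliation $\{\gamma_y\}$ joining $\phi(Z_*(s))$ to $\phi(Z^*(s))$. Away from the finitely many reflex vertices of $\P$ the leaves $\gamma_y$ are locally parallel straight segments, so using the remaining freedom in how $g$ is distributed along each leaf one can choose the homeomorphisms $g_i$ above so that each such transversal crosses every $S_i$ along a short, almost orthogonal arc and spends only a negligible amount of length near the reflex vertices; its total length is then at most $d_\P(\phi(Z_*(s)),\phi(Z^*(s)))+o_N(1)$, since a length-nonincreasing homotopy rel endpoints inside $\P$ carries it onto the corresponding geodesic. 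Producing a \emph{single} $g$ under which the horizontal and the vertical chords are both mapped to near-geodesics simultaneously — equivalently, realising a near-geodesic curvilinear grid on $\P$ compatible with $\phi$ on $\partial\Q$ — and in particular controlling the length of these transversals near the reflex vertices of $\P$, is the main obstacle of the argument. Once it is in place, integrating over $s$ and letting $N\to\infty$ yields the second inequality in \eqref{eqComponentwise extension}.
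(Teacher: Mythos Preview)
The paper does not prove this theorem: it is quoted verbatim from \cite{CKRExt} and used as a black box, so there is no proof in the paper to compare against. What I can do is assess your argument on its own terms.

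There are two genuine gaps.

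For the first inequality you write that ``the arc $g(\{x_2=y\}\cap\Q)$ lies in the thin region $S_i$ and joins $L$ to $R$, hence has length at most $\ell(y_{i-1})+\eps/N$''. This inference is false. A simple arc inside a thin polygonal tube joining its two short ends can have length far exceeding that of the centreline: in a rectangle $[0,\ell]\times[0,\delta]$ one can draw an injective polygonal arc from the left side to the right side of length of order $k\ell$ for any $k$, by going almost all the way to the right at height $0$, back almost to the left at height $\delta/k$, and so on. Theorem~\ref{HPExt}, which you invoke to build each $g_i$, gives only the crude bound $\int|Dg_i|\le C\,\diam(\text{strip})\int_{\partial(\text{strip})}|D_\tau\phi|$ and says nothing about the length of individual horizontal slices of $g_i$. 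So after gluing the $g_i$ you have no usable bound on $\H^1\big(g(\{x_2=y\}\cap\Q)\big)$, and the slicing argument does not close. To make this work you would have to construct $g_i$ by hand so that \emph{every} horizontal slice of the strip is sent to an arc whose length is close to $\ell(y)$; this is exactly the kind of directional control that makes the result non-trivial and is the content of \cite{CKRExt}.

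For the second inequality you are candid that you have not produced the argument: you describe what one would like (``a single $g$ under which the horizontal and the vertical chords are both mapped to near-geodesics simultaneously'') and call it ``the main obstacle''. Indeed it is, and nothing in the proposal resolves it. The phrase ``using the remaining freedom in how $g$ is distributed along each leaf'' hides the entire difficulty: once you have nailed down $g$ on the horizontal slices to get the first bound, there is in general \emph{no} remaining freedom, and the vertical transversals of a badly chosen $g$ can be arbitrarily long. The simultaneous control of both directional variations is precisely the point of \cite{CKRExt} (a rotated, polygonal generalisation of the minimal $BV$ extension of \cite{PR1}); it is not a consequence of Theorem~\ref{HPExt} together with a geodesic foliation.
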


\subsection{Properties of BV maps}

	In the following we repeat some necessary results from the structure theory of BV mappings.
	
	The following definition is \cite[Definition~2.40]{AFP}
	\begin{definition}[Tangent measures]
		We define the set of tangent measures to a $\er^m$ valued Radon measure $\mu$ on $\rn$ at $x\in \rn$ as the set of all finite Radon measures on $B(0,1)$, which are weak* limits of
		$$
		\frac{\mu(B(x,r))}{|\mu(B(x,r))|}
		$$
		as $r\to 0$.
	\end{definition}
	The following definition is \cite[Definition~2.79]{AFP}. 
	\begin{definition}[Approximate tangent spaces]
		Let $x\in G\subset \rn$ be open and let $\mu\in \mathcal{M}(G, \er^m)$. We say that $V$, a $k$-dimensional vector subspace of $\rn$, is the approximate tangent space of $\mu$ with multiplicity $\lambda \in \er$ if
		$$
			\frac{\mu(B(x,r))}{r^k} \wstar \lambda \H^k_{\rceil V}
		$$
		as $r\to 0$ and we denote this as $\operatorname{Tan}^k(\mu , x):=  \lambda \H^k_{\rceil V}$.
	\end{definition}

    Let us emphasize that through out the paper we use the standard notation $\H^k$ to denote the $k$-dimensional Hausdorff measure. 
	The following is \cite[Theorem~2.81, (b)]{AFP}
	\begin{thm}[Strict convergence of approximate tangent spaces]\label{StrictForTanMeas}
		Let $\mu$ be a $\er^m$ valued Radon measure on $\rn$ and assume that $\limsup_{r\to 0^+}r^{-k}|\mu|(B(x,r))<\infty$. Call $f$ the function of the polar decomposition of $\mu$ i.e. $\mu = f|\mu|$ and assume further that $x$ is a Lebesgue point of $f$ with respect to $|\mu|$. Then
		$$
			\nu = \operatorname{Tan}^k(\mu , x)  \ \text{ if and only if } \ |\nu| = \operatorname{Tan}^k(|\mu| , x).
		$$
	\end{thm}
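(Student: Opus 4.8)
The plan is to reduce the statement to one weak\textsuperscript{*}-convergence computation relating the blown-up measures
\[
\mu_{x,r}(E) := r^{-k}\mu(x+rE), \qquad |\mu|_{x,r}(E):= r^{-k}|\mu|(x+rE),
\]
which by the polar decomposition $\mu=f|\mu|$ satisfy $\mu_{x,r}=f(x+r\,\cdot\,)\,|\mu|_{x,r}$. First I would record two elementary facts. From $\limsup_{r\to0^+}r^{-k}|\mu|(B(x,r))<\infty$ we obtain a bound $|\mu|_{x,r}(B(0,1))=r^{-k}|\mu|(B(x,r))\le C$ for all small $r$, so the family $\{|\mu|_{x,r}\}$ is weak\textsuperscript{*}-precompact and every sequence $r_j\downarrow0$ has a subsequence along which $|\mu|_{x,r_j}\wstar\rho$ for some nonnegative Radon measure $\rho$. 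Secondly, since $x$ is a Lebesgue point of $f$ with respect to $|\mu|$ and $|f|=1$ holds $|\mu|$-a.e., the Lebesgue value $f(x)$ is a unit vector, because
\[
\big|\,|f(x)|-1\,\big| = \Big|\tfrac{1}{|\mu|(B(x,r))}\int_{B(x,r)}\big(|f(x)|-|f(y)|\big)\,d|\mu|(y)\Big|\le \tfrac{1}{|\mu|(B(x,r))}\int_{B(x,r)}|f(y)-f(x)|\,d|\mu|(y)\xrightarrow[r\to0]{}0 .
\]

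The core step I would isolate as a lemma: if $r_j\downarrow0$ and $|\mu|_{x,r_j}\wstar\rho$, then $\mu_{x,r_j}\wstar f(x)\rho$. For $\varphi\in C_c(B(0,1))$ split
\[
\int\varphi\,d\mu_{x,r_j} = f(x)\int\varphi\,d|\mu|_{x,r_j} + \int\varphi(z)\big(f(x+r_jz)-f(x)\big)\,d|\mu|_{x,r_j}(z) .
\]
The first term tends to $f(x)\int\varphi\,d\rho$ by hypothesis; the second is bounded in modulus by
\[
\|\varphi\|_{\infty}\,r_j^{-k}\int_{B(x,r_j)}|f(y)-f(x)|\,d|\mu|(y) = \|\varphi\|_{\infty}\Big(r_j^{-k}|\mu|(B(x,r_j))\Big)\Big(\tfrac{1}{|\mu|(B(x,r_j))}\int_{B(x,r_j)}|f(y)-f(x)|\,d|\mu|(y)\Big),
\]
a product of a factor bounded by the first elementary fact and a factor tending to $0$ by the Lebesgue point property, hence it vanishes.

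With the lemma in hand both implications are short. If $\operatorname{Tan}^k(|\mu|,x)=\mu_0\H^k_{\rceil V}$ exists, the lemma applies to the whole family and yields $\mu_{x,r}\wstar f(x)\mu_0\H^k_{\rceil V}$, which has the admissible form $\lambda\H^k_{\rceil V}$ with $\lambda=\mu_0f(x)\in\er^m$; thus $\operatorname{Tan}^k(\mu,x)$ exists, equals $f(x)\operatorname{Tan}^k(|\mu|,x)$, and (since $|f(x)|=1$) its total variation is $\operatorname{Tan}^k(|\mu|,x)$. Conversely, if $\operatorname{Tan}^k(\mu,x)=\nu$ exists, choose by precompactness a subsequence with $|\mu|_{x,r_j}\wstar\rho$; the lemma forces $f(x)\rho=\nu$, so $\rho=|f(x)\rho|=|\nu|$ is independent of the subsequence and $|\mu|_{x,r}\wstar|\nu|$, while $|\nu|=|\lambda|\H^k_{\rceil V}$ because $\nu=\lambda\H^k_{\rceil V}$ by the definition of $\operatorname{Tan}^k$; hence $\operatorname{Tan}^k(|\mu|,x)=|\nu|$. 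The equivalence $\nu=\operatorname{Tan}^k(\mu,x)\iff|\nu|=\operatorname{Tan}^k(|\mu|,x)$ follows. The only slightly delicate points are the unit-norm property of the Lebesgue value $f(x)$ and the observation that the error term factors as (bounded)$\times$(vanishing average); the rest is weak\textsuperscript{*}-compactness of bounded measures together with a one-line estimate, so I do not anticipate a serious obstacle.
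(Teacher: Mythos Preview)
The paper does not give its own proof of this theorem; it is quoted verbatim as \cite[Theorem~2.81,~(b)]{AFP}. Your argument is correct and is essentially the standard proof one finds for that result: the splitting $\mu_{x,r}=f(x)|\mu|_{x,r}+(f(x+r\cdot)-f(x))|\mu|_{x,r}$, the observation that the density bound makes the second term negligible at Lebesgue points of $f$, and the subsequence/precompactness argument for the converse are exactly the ingredients used in Ambrosio--Fusco--Pallara. One small remark: the paper's Definition~2.5 writes $\lambda\in\er$, which is the scalar case; for the vector-valued $\mu$ appearing in the theorem the multiplicity is $\er^m$-valued, as you correctly note when you write $\lambda=\mu_0 f(x)\in\er^m$.
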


	The following definition is \cite[Definition~3.67]{AFP}
	\begin{definition}[Approximate jump points]
		Let $f\in L^1_{\loc}(G, \er^m)$ and let $x\in G\subset \rn$ be open. We say that $x$ is an approximate jump point of $f$ if there exist $a\neq b \in \er^m$ and a $v\in \rn$, $|v|=1$  such that
		$$
		 \lim_{r\to0} \oint_{B(x,r,v,+)}|f(y) - a| d\L^n(y) = 0 \quad \text{ and } \quad \lim_{r\to0} \oint_{B(x,r,v,-)}|f(y) - b| d\L^n(y)= 0
		$$
		where $B(x,r,v,+) : = \{y \in B(x,r); \langle y-x,v\rangle>0\}$ and $B(x,r,v,-) : = \{y \in B(x,r); \langle y-x,v\rangle<0\}$. 
	\end{definition}
	We denote $f^+(x) := a$ and $f^-(x) := b$ and up to fixing the orientation of the vector $v$ the notation is unique. We denote the set of all jump points of $f$ as $J_f$.
	
	The following is the Federer-Vol'pert theorem as in \cite[Theorem~3.78]{AFP}
	\begin{thm}[Federer-Vol'pert]\label{FV}
		For any $f\in \BV(\Omega, \er^m)$ the discontinuity set $S_f$ is countably $\H^{n-1}$-rectifiable and $\H^{n-1}(S_f\setminus J_f) = 0$. Moreover $Df_{\rceil J_f} = (f^+ - f^-)\otimes v\H^{n-1}_{\rceil J_f}$ and
		$$
			\begin{aligned}
				\operatorname{Tan}^{n-1}(J_f , x) &=  (v(x))^{\bot}\\
				\operatorname{Tan}^{n-1}(|Df|_{\rceil J_f} , x) &= |f^+(x)-f^-(x)| \H^{n-1}_{\rceil (v(x))^{\bot}}
			\end{aligned}
		$$
		for $\H^{n-1}$-almost every $x \in J_f$.
	\end{thm}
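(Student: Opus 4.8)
The plan is to follow the classical argument through sub- and super-level sets, first reducing to scalar $\BV$ functions and recombining the components afterwards; the analytic core is a blow-up argument showing that $\H^{n-1}$-almost every discontinuity point is an approximate jump point.

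\emph{Reduction.} Writing $f=(f_1,\dots,f_m)$ with each $f_i\in\BV(\Omega)$ we have $S_f=\bigcup_{i=1}^m S_{f_i}$, and since a countable union of countably $\H^{n-1}$-rectifiable sets is again such, it is enough to prove (a) each $S_{f_i}$ is countably $\H^{n-1}$-rectifiable with $\H^{n-1}(S_{f_i}\setminus J_{f_i})=0$, and (b) at $\H^{n-1}$-almost every point of $S_f$ all discontinuous components share a common approximate jump direction $v$. Statement (b) is exactly what promotes the componentwise information to the vector-valued conclusion and to the formula for $Df_{\rceil J_f}$.

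\emph{Rectifiability.} For scalar $u\in\BV(\Omega)$ set $E_t:=\{u>t\}$; by the coarea formula $Du=\int_{\R}D\chi_{E_t}\,dt$ and $E_t$ has finite perimeter for a.e.\ $t$. If $x\in S_u$ has approximate lower and upper limits $u^\wedge(x)<u^\vee(x)$, then $x$ belongs to the essential boundary $\partial^* E_t$ for every $t\in(u^\wedge(x),u^\vee(x))$, hence modulo an $\H^{n-1}$-null set $S_u\subset\bigcup_{t\in\mathbb Q}\partial^* E_t$. By De Giorgi's structure theorem each $\partial^* E_t$ coincides $\H^{n-1}$-a.e.\ with the reduced boundary, which is countably $\H^{n-1}$-rectifiable; therefore so is $S_u$. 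A companion estimate via the relative isoperimetric inequality gives $\H^{n-1}(S_u\cap K)<\infty$ for every $K\ss\Omega$, a fact needed to make the blow-up meaningful.

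\emph{Jump points and vectorial structure.} Fix $\H^{n-1}$-a.e.\ $x\in S_u$. Rectifiability provides an approximate tangent plane $v^\perp$ to $S_u$ at $x$, and $|Du|$ concentrates on this plane under rescaling. After the usual normalization and extraction, the rescalings $y\mapsto u(x+ry)$ converge in $\BV_{\loc}$ to some $u_0$; the coarea representation forces $Du_0$ to be carried by $v^\perp$, so $u_0(y)=g(\langle y,v\rangle)$ with $g$ monotone of bounded variation on $\R$, and because $x\in S_u$ the one-sided averages differ, which forces $g$ to be a single jump — hence $x\in J_u$ and $\H^{n-1}(S_u\setminus J_u)=0$. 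Applying this componentwise and then blowing up $f$ itself at $\H^{n-1}$-a.e.\ $x\in S_f$ — where $S_f$ is rectifiable with tangent plane $v(x)^\perp$ and $|Df|$ concentrates there — yields a blow-up limit $y\mapsto f^+(x)\chi_{\{\langle y,v(x)\rangle>0\}}+f^-(x)\chi_{\{\langle y,v(x)\rangle<0\}}$, which at once produces the common direction $v(x)$ and identifies $f^\pm(x)$. Consequently the jump part of $Df$ equals $(f^+-f^-)\otimes v\,\H^{n-1}_{\rceil J_f}$, and together with $\H^{n-1}(S_f\setminus J_f)=0$ this gives $Df_{\rceil J_f}=(f^+-f^-)\otimes v\,\H^{n-1}_{\rceil J_f}$.

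\emph{Tangent measures, and the main obstacle.} The identity $\Tan^{n-1}(J_f,x)=(v(x))^{\perp}$ is precisely the $\H^{n-1}$-a.e.\ existence of the approximate tangent plane for the rectifiable set $J_f$ of locally finite $\H^{n-1}$-measure, established above; a final rescaling of both sides of the formula for $Df_{\rceil J_f}$ about $x$ then yields $\Tan^{n-1}(|Df|_{\rceil J_f},x)=|f^+(x)-f^-(x)|\,\H^{n-1}_{\rceil (v(x))^{\perp}}$, the last assertion. The main obstacle is the blow-up step identifying $\H^{n-1}$-a.e.\ discontinuity as a jump: it requires $\BV$ compactness together with a careful use of the approximate tangent plane to exclude Cantor-type discontinuities, while everything else rests on De Giorgi's structure theorem and the coarea formula. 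In the planar setting $n=2$ relevant here this blow-up may be replaced by one-dimensional slicing of $\BV$ functions along lines, which is the lightest route.
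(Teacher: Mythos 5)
The paper does not prove this statement at all: it is quoted verbatim as the Federer--Vol'pert theorem from \cite[Theorem~3.78]{AFP} (together with Theorem~3.77 there for the formula for $Df_{\rceil J_f}$), and is used as a black box. So there is no in-paper argument to compare yours against; the relevant question is only whether your outline matches the classical proof, and it does. Your route --- reduction to scalar components, the coarea formula and De Giorgi's structure theorem to get countable $\H^{n-1}$-rectifiability and local $\H^{n-1}$-finiteness of $S_u$, a blow-up at $\H^{n-1}$-a.e.\ point to show the discontinuity is a genuine jump, and a final rescaling to read off the two tangent-measure identities --- is exactly the argument in \cite{AFP}. Two caveats on the level of rigour: the assertion that the one-dimensional blow-up limit $g$ ``is forced to be a single jump'' is where the real work lies (one must rule out a nontrivial diffuse part of $Dg$ in the limit, which in \cite{AFP} is done by blowing up the sets $E_t=\{u>t\}$ for rational $t$ in the jump interval and using that each blow-up is a half-space with a common normal), and the claim that all discontinuous components of $f$ share one normal $v(x)$ at $\H^{n-1}$-a.e.\ point also needs the same half-space comparison across components. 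You flag the first of these yourself, so the sketch is honest; but as written these two steps are named rather than proved. Since the paper treats the theorem as standard background, citing \cite[Theorems~3.77--3.78]{AFP} is the appropriate resolution rather than reproving it.
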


	The following is Alberti's Rank one theorem from \cite{A} as in \cite[Theorem~3.94]{AFP}
	\begin{thm}[Rank one]\label{ARO}
		Let $f\in \BV(\Omega, \er^m)$ and $g$ is the function such that $Df = g|Df|$ then there exist unit vectors $u,v$ such that $g(x) = u(x)\otimes v(x)$ for $|D^sf|$-almost every $x\in \Omega$.
	\end{thm}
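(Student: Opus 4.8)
This is Alberti's rank-one theorem, a deep structural result. The jump part is already contained in Theorem~\ref{FV}, where the polar of $Df_{\rceil J_f}$ is the manifestly rank-one matrix $(f^+-f^-)\otimes v/|f^+-f^-|$, so the content is the assertion for the Cantor part $D^cf$. I would argue by a blow-up contradiction, in the spirit of the elementary proof of Massaccesi and Vittone (Alberti's original argument, and the specialization of the De Philippis--Rindler structure theorem for $\mathcal{A}$-free measures, are alternatives noted at the end). The first step is a reduction to two components: $\operatorname{rank}g(x)\ge2$ precisely when two rows of $g(x)$ are linearly independent, so if the conclusion failed there would be indices $k\ne l$ and a Borel set $E$ with $|D^sf|(E)>0$ on which the rows $g^{(k)}$ and $g^{(l)}$ are linearly independent; after restricting $E$ further (keeping $|D^sf|(E)>0$) we may assume that on $E$ the vectors $g^{(k)}$ and $g^{(l)}$ are bounded away from $0$ and their mutual angle is bounded away from $0$ and $\pi$. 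Writing $G$ for the $2\times n$ matrix with these two rows, one has $D^s(f_k,f_l)=G|D^sf|$, so $|D^s(f_k,f_l)|(E)>0$; thus it suffices to prove the statement for $m=2$, i.e.\ to assume $Df_1=\xi_1|Df|$, $Df_2=\xi_2|Df|$ with $\xi_1,\xi_2\colon\Omega\to\rn$ uniformly transversal on $E$, and to reach a contradiction.

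The second step is a blow-up at a generic singular point. By standard differentiation theory, for $|D^sf|$-a.e.\ $x_0\in E$: $x_0$ is a density point of $E$ and a Lebesgue point, with respect to $|Df|$, of $\xi_1$, of $\xi_2$ and of $\frac{d|D^af|}{d|Df|}$, which vanishes at $x_0$; one has $\limsup_{r\to0}r^{-n}|D^sf|(B(x_0,r))=+\infty$; and for a suitable $r_j\downarrow0$ the rescaled measures $\mu_j$, $\mu_j(A):=|Df|(x_0+r_jA)/|Df|(B(x_0,r_j))$, satisfy $\mu_j\wstar\lambda$ for a nonzero locally finite measure $\lambda$, while the matching rescalings of $f$ (with averages subtracted) converge in $L^1_{\loc}(\rn;\er^2)$ to some $v=(v_1,v_2)\in\BV_{\loc}(\rn;\er^2)$ with $Dv_1=\xi_1(x_0)\,\lambda$ and $Dv_2=\xi_2(x_0)\,\lambda$. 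The vanishing of the absolutely continuous density is what makes the blow-up of $|Df|$ agree with that of $|D^sf|$ and freezes the polar vectors to the fixed, linearly independent vectors $\xi_1(x_0),\xi_2(x_0)\in\rn$; the compactness relies on the general fact that along a subsequence the ratios $|Df|(B(x_0,\rho r))/|Df|(B(x_0,r))$ stay bounded for $|Df|$-a.e.\ $x_0$, together with $\BV$-compactness and a rescaled Poincar\'e inequality.

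The third step uses that a distributional gradient is curl-free. From $Dv_1=\xi_1(x_0)\lambda$, equating the mixed second distributional derivatives of $v_1$ gives $\xi_1(x_0)_i\,\partial_j\lambda=\xi_1(x_0)_j\,\partial_i\lambda$ for all $i,j$, so $\nabla\lambda$ is a scalar distribution times the fixed vector $\xi_1(x_0)$; the identical computation with $v_2$ forces $\nabla\lambda$ to be a scalar distribution times $\xi_2(x_0)$ as well. Linear independence of $\xi_1(x_0)$ and $\xi_2(x_0)$ now gives $\nabla\lambda=0$, hence $\lambda=c\,\L^n$ for some $c>0$ on the (connected) blow-up space. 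Since the same reasoning applies to every nonzero blow-up of $|Df|$ at $|D^sf|$-a.e.\ point of $E$, the measure $|D^sf|_{\rceil E}$ has all of its (nonzero) tangent measures equal to constant multiples of $\L^n$; a nonzero measure singular with respect to $\L^n$ cannot have this property (this is the measure-theoretic crux, a consequence of Besicovitch-type density estimates and Preiss's theory of tangent measures), so we reach a contradiction. Hence $\operatorname{rank}g=1$ for $|D^sf|$-a.e.\ $x$, and a rank-one matrix of unit Frobenius norm can be written as $u\otimes v$ with $u,v$ unit vectors.

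I expect the blow-up step to be the main obstacle: one must guarantee a \emph{nontrivial} tangent measure at $|D^sf|$-a.e.\ point -- which is exactly why the rescaling is normalized by $|Df|(B(x_0,r))$ rather than by $r^n$, useless where the upper density is infinite -- and one must verify that the weak-$*$ limit both retains the curl-free (gradient) structure and keeps the polar frozen at $g(x_0)$; the concluding step, that a singular measure cannot have only flat tangent measures, is also non-trivial. The reduction to $m=2$, the curl computation, and the linear-algebra conclusion $\nabla\lambda=0$ are soft. A shorter but considerably less elementary route is to note that $Df$ is $\operatorname{curl}$-free and invoke the De Philippis--Rindler theorem: the polar of the singular part of an $\mathcal{A}$-free measure takes values in the wave cone of $\mathcal{A}$, and for $\mathcal{A}=\operatorname{curl}$ that cone is exactly the set of rank-one matrices. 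As the statement is classical, in the text one simply cites \cite{A} (or \cite[Theorem~3.94]{AFP}).
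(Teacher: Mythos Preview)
The paper does not prove this theorem; it is quoted as a classical result with a citation to Alberti \cite{A} and to \cite[Theorem~3.94]{AFP}, exactly as you anticipate in your final sentence. So there is no ``paper's proof'' to compare against.

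Your sketch is a coherent outline of the blow-up strategy, with the reduction to two components, the freezing of the polar at a generic singular point, and the curl computation forcing $\nabla\lambda=0$ all correct. Two remarks. First, the attribution is slightly off: the argument you describe is closer in spirit to the De~Philippis--Rindler approach than to Massaccesi--Vittone, whose elementary proof avoids tangent measures altogether and proceeds via one-dimensional slicing and a clever measurable selection. Second, you correctly flag the genuine difficulty: the assertion that a measure singular with respect to $\L^n$ cannot, at almost every point of its support, have \emph{only} flat tangent measures is precisely the non-trivial measure-theoretic input (this is where De~Philippis--Rindler do real work, and it is not a direct corollary of Besicovitch or of Preiss's density theorem alone). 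Your outline is honest about this gap, so as a high-level proposal it is sound; but be aware that filling it is essentially the whole content of the theorem.
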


	\begin{lemma}\label{Stupido}
		Let $f\in \BV(\Omega)$ and let $A\subset \Omega$ be open then $|\langle Df, v\rangle|(A) \leq |Df|(A)$ for any $|v|=1$.
	\end{lemma}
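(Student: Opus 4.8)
The plan is to reduce the inequality to the pointwise Cauchy--Schwarz inequality via the polar decomposition of the vector measure $Df$. I would write $Df = g\,|Df|$, where $g\colon \Omega \to \rn$ is a Borel map with $|g(x)| = 1$ for $|Df|$-almost every $x$ (this Radon--Nikodym polar factor exists for any $\rn$-valued Radon measure, cf.\ \cite{AFP}). For a fixed unit vector $v$ the scalar measure $\langle Df, v\rangle$ then has density $\langle g, v\rangle$ with respect to $|Df|$, so its total variation on the open set $A$ is
\begin{equation*}
	|\langle Df, v\rangle|(A) = \int_A |\langle g(x), v\rangle|\, d|Df|(x).
\end{equation*}
Since $|\langle g(x), v\rangle| \le |g(x)|\,|v| = 1$ for $|Df|$-a.e.\ $x$ by Cauchy--Schwarz, the right-hand side is at most $\int_A 1\, d|Df| = |Df|(A)$, which is exactly the asserted bound.

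Alternatively, one can argue straight from the dual (variational) definition of the total variation: $|\langle Df, v\rangle|(A) = \sup\{\int_A \varphi\, d\langle Df, v\rangle : \varphi \in C_c(A),\ |\varphi|\le 1\}$, and for each admissible $\varphi$ the integral equals $\int_A \langle \varphi v, dDf\rangle$, where $\varphi v$ is an $\rn$-valued test field with $|\varphi v| \le 1$; taking the supremum over all such fields gives precisely $|Df|(A)$. There is no genuine obstacle here; the only point that needs a word of care is that $A$ is open, so that both quantities are the measures of the same open set and the density representation of the total variation of $\langle Df, v\rangle$ applies verbatim. One could also note that the same one-line argument covers the matrix-valued version used elsewhere in the paper (with $Df = u\otimes v\,|Df|$ in the rank-one case, $|\langle Df, w\rangle| = |\langle v, w\rangle|\,|Df| \le |Df|$), though only the scalar pairing is needed for the statement as written.
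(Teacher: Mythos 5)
Your proof is correct, and it takes a genuinely different and in fact more direct route than the paper's. The paper argues via approximation: it invokes \cite[Theorem~5.2]{EG} to produce smooth $f_k$ converging strictly to $f$, uses the pointwise inequality $|\langle Df_k(x),v\rangle|\le |Df_k(x)|$ for smooth maps, and then passes to the limit via lower semicontinuity of the variation. Your argument instead goes straight through the polar decomposition $Df=g\,|Df|$ and the pointwise Cauchy--Schwarz bound $|g(x)v|\le |g(x)|\,|v|=1$, giving $|\langle Df,v\rangle|(A)=\int_A |g\,v|\,d|Df|\le |Df|(A)$ in one line. This avoids the approximation theorem entirely, and it also sidesteps the slightly delicate issue in the approximation route of how strict convergence on $\Omega$ controls $\liminf_k|Df_k|(A)$ on a sub-open set $A$ whose closure may charge $|Df|$. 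Your dual (variational) version of the argument, pairing with test fields $\varphi v$, is the same estimate in disguise and is also fine. The one notational slip is that, since $f$ is $\R^2$-valued here, $Df$ and hence $g$ are $2\times 2$ matrix-valued rather than $\R^n$-valued, so the relevant pointwise bound is $|g(x)v|\le |g(x)|_F|v|$ (Frobenius norm) rather than scalar Cauchy--Schwarz for $\langle g,v\rangle$; you acknowledge the matrix case at the end, and the inequality holds verbatim, so this does not affect correctness.
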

	\begin{proof}
		By \cite[Theorem 5.2]{EG} we have a sequence of smooth functions $f_k$ strictly converging to $f$ in $BV(\Omega)$. For every unit vector $v$ it is obvious that $|\langle Df_k(x), v\rangle| \leq |Df_k(x)|$ everywhere in $\Omega$. Now integrating over $A$ and using the lower semi continuity of the variation we conclude.
	\end{proof}

\section{The NCBV property for $BV$ maps}

\subsection{$BV$ on grids}\label{BVonGrid}

In \cite{PP} the authors introduced a property called the $NC$ condition that characterises the limits of $W^{1,p}$ homeomorphisms from $Q(0,1)$ onto $Q(0,1)$ equalling the identity on the boundary. In \cite{CKR} we generalized this condition for BV maps, which can even fail to be continuous on a 1-rectifiable set. The essence of these conditions is that a certain map can be approximated uniformly by a continuous injective map with error arbitrarily small. Our approach requires choosing a finite number of points where $f$ is continuous which is in the following proposition.

\begin{prop}\label{DebileDebileDebileDot}
	Let $f\in BV(Q(0,1),\er^2)$ and let $v_1, v_2, v_3, v_4 \in \er^2$ with $|v_i| =1$. Then for almost every $(x,y) \in Q(0,1)$ it holds that 
	\begin{equation}\label{Yesterday}
		f_{\rceil (x,y) + \bigcup_{i} v_i\er} \text{ is continuous at } (x,y)
	\end{equation}
	and 
	\begin{equation}\label{Rainbow}
		\lim_{r\to 0} r^{-1}|Df|\big(Q((x,y),r)\big) =0.
	\end{equation}
\end{prop}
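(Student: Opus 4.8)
The two conclusions \eqref{Yesterday} and \eqref{Rainbow} are independent of one another, so the plan is to exhibit, for each, a set of full $\L^2$-measure on which it holds, and then to intersect the two sets. Assertion \eqref{Rainbow} is a statement purely about the finite Radon measure $|Df|$, while \eqref{Yesterday} rests on the one-dimensional slicing theory of $BV$ functions.

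For \eqref{Rainbow} I would prove the general fact that $\lim_{r\to0}r^{-1}\mu(B(x,r))=0$ for $\L^2$-a.e.\ $x$, whenever $\mu$ is a finite Radon measure on $\R^2$; applied with $\mu=|Df|$ and combined with $Q(x,r)\subset B(x,2r)$ this gives \eqref{Rainbow}. Fix $\rho>0$ and $k\in\N$ and set $E_{k,\rho}=\{x:\ \mu(B(x,r))>k^{-1}r\ \text{for some}\ r\in(0,\rho)\}$. Choosing such a radius $r_x$ for each $x\in E_{k,\rho}$ and applying the $5r$-covering lemma yields a countable pairwise disjoint subfamily $\{B(x_i,r_i)\}$ with $E_{k,\rho}\subset\bigcup_iB(x_i,5r_i)$ and $\mu(B(x_i,r_i))>k^{-1}r_i$. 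Using $r_i^2\le \rho\,r_i<\rho k\,\mu(B(x_i,r_i))$ and disjointness,
$$\L^2(E_{k,\rho})\le 25\pi\sum_i r_i^2\le 25\pi\rho k\sum_i\mu(B(x_i,r_i))\le 25\pi\rho k\,|Df|(Q(0,1)),$$
which tends to $0$ as $\rho\to0$. Hence $\{x:\limsup_{r\to0}r^{-1}|Df|(B(x,r))>k^{-1}\}\subset\bigcap_{\rho>0}E_{k,\rho}$ is $\L^2$-null, and taking the union over $k$ finishes this part. (Alternatively one may invoke the standard density comparison, which gives $\H^1(\{\limsup_{r\to0}r^{-1}|Df|(B(\cdot,r))\ge t\})\le Ct^{-1}|Df|(Q(0,1))<\infty$; a set of finite $\H^1$-measure is $\L^2$-null.)

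For \eqref{Yesterday}, since the union runs over only four directions it suffices to fix one unit vector $v=v_i$ and show that $B_v:=\{(x,y)\in Q(0,1):\ f_{\rceil(x,y)+v\R}\ \text{is not continuous at}\ (x,y)\}$ is $\L^2$-null; intersecting the four complements with the full-measure set from the previous step then proves the proposition. Let $v^\perp$ be a fixed unit vector orthogonal to $v$, and for $s\in\R$ put $\ell_s=\{sv^\perp+tv:\ t\in\R\}$ and $f_s(t)=f(sv^\perp+tv)$. By the one-dimensional slicing characterisation of $BV$ functions (\cite[Section~3.11]{AFP}, or the restriction procedure of Section~\ref{BVonGrid}), for $\L^1$-a.e.\ $s$ the slice $f_s$ is a $BV$ function of one variable on the corresponding interval; such a function has one-sided limits at every point and its jump set $J_{f_s}$ is at most countable. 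Thus $B_v$ is contained in the union of $\{sv^\perp+tv:\ s\in N,\ t\in\R\}$, where $N\subset\R$ is the $\L^1$-null set of parameters for which $f_s$ fails to be $BV$, with $\{sv^\perp+tv:\ s\notin N,\ t\in J_{f_s}\}$. The first set is $\L^2$-null by Fubini; the second is $\L^2$-null because its slice by $\ell_s$ is the at most countable set $\{sv^\perp+tv:\ t\in J_{f_s}\}$ for every $s\notin N$. Hence $\L^2(B_v)=0$.

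The covering estimate and the slicing theorem are standard, so I expect the only point requiring care to be the bookkeeping in the last step: one must know that $f_{\rceil\ell_s}$ is a well-defined one-dimensional $BV$ function for $\L^1$-a.e.\ $s$ (independently of the admissible representative of $f$, which also pins the value at continuity points), and that the family $\{s\}\times J_{f_s}$ assembles into an $\L^2$-measurable subset of the $(s,t)$-plane with at most countable slices, so that Fubini's theorem legitimately yields $\L^2$-nullity. This is exactly what the slicing theory of $BV$ provides and what is built into the restriction convention of Section~\ref{BVonGrid}; granting it, the argument is as sketched.
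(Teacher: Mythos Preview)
Your proof is correct and follows essentially the same approach as the paper, which simply cites \cite[Theorem~3.107]{AFP} for \eqref{Yesterday} and \cite[Proposition~3.92]{AFP} for \eqref{Rainbow}. You have in effect unpacked those citations: your covering argument for \eqref{Rainbow} reproves the relevant density statement behind \cite[Proposition~3.92]{AFP}, and your slicing-plus-Fubini argument for \eqref{Yesterday} is precisely the content of the one-dimensional section theory in \cite[Theorem~3.107]{AFP}; the only point you (rightly) flag as needing care, namely consistency of the value at $(x,y)$ across the four directions via the precise representative, is exactly what that slicing theorem guarantees.
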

\begin{proof}
	The claim \eqref{Yesterday} follows from \cite[Theorem 3.107]{AFP} and \eqref{Rainbow} follows from \cite[Proposition 3.92]{AFP}.
\end{proof}

\begin{corollary}\label{Repeat}
	Let $f\in BV(Q(0,1),\er^2)$, There exists a set $N_1, N_2 \subset[-1,1]$, $\L^1(N_1) = \L^1(N_2) = 0$ such that
	\begin{enumerate}
		\item $f_{\rceil \{x\}\times [-1,1]}$ is BV on $\{x\}\times [-1,1]$,
		\item $f_{\rceil [-1,1]\times\{y\}}$ is BV on $[-1,1]\times\{y\}$ and
		\item$f_{\rceil \{x\}\times [-1,1] \cup [-1,1]\times\{y\}}$ is continuous at $(x,y)$
	\end{enumerate}
	for $\L^1$-almost every $y\in [-1,1]^2$ if $x\in [-1,1]\setminus N_1$ and for $\L^1$-almost every $x\in [-1,1]^2$ if $y\in [-1,1]\setminus N_2$.
\end{corollary}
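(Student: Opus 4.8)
The plan is to combine Proposition~\ref{DebileDebileDebileDot} with the classical characterisation of $BV$ functions by their one-dimensional sections and a single application of Fubini's theorem; the whole argument amounts to careful bookkeeping of a few Lebesgue-null sets. First I would invoke Proposition~\ref{DebileDebileDebileDot} with the choice $v_1 = v_3 = (1,0)$ and $v_2 = v_4 = (0,1)$. Since $(x,y) + v_1\er = [-1,1]\times\{y\}$ and $(x,y)+v_2\er = \{x\}\times[-1,1]$ (up to the boundary points, which are irrelevant for continuity at an interior point), this yields an $\L^2$-null set $E\subset Q(0,1)$ such that for every $(x,y)\in Q(0,1)\setminus E$ the restriction $f_{\rceil \{x\}\times[-1,1]\,\cup\,[-1,1]\times\{y\}}$ is continuous at $(x,y)$ (property \eqref{Rainbow} is not needed for this corollary). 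Next, by the standard one-dimensional section characterisation of $BV$ maps (see e.g.\ \cite[Theorem~3.103]{AFP}), there are $\L^1$-null sets $M_1,M_2\subset[-1,1]$ such that $f_{\rceil \{x\}\times[-1,1]}$ is $BV$ for every $x\in[-1,1]\setminus M_1$ and $f_{\rceil [-1,1]\times\{y\}}$ is $BV$ for every $y\in[-1,1]\setminus M_2$.

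Then I would apply Fubini's theorem to $\chi_E$: because $\L^2(E)=0$, the set
$$
N_1' := \big\{x\in[-1,1] : \L^1(\{y\in[-1,1] : (x,y)\in E\})>0\big\}
$$
is $\L^1$-null, and symmetrically the set $N_2'$ obtained by exchanging the roles of the two variables is $\L^1$-null. I would then set $N_1 := M_1\cup N_1'$ and $N_2 := M_2\cup N_2'$, so that $\L^1(N_1)=\L^1(N_2)=0$.

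Finally, verifying the two symmetric assertions is a direct chase through the definitions. Fix $x\in[-1,1]\setminus N_1$. Since $x\notin M_1$, item~(1) holds; since $x\notin N_1'$ we have $(x,y)\notin E$ for $\L^1$-a.e.\ $y$, which gives item~(3) for a.e.\ $y$; and since $\L^1(M_2)=0$, item~(2) holds for $\L^1$-a.e.\ $y$. Intersecting these two full-measure conditions on $y$ shows that (1)--(3) hold for $\L^1$-a.e.\ $y$ whenever $x\notin N_1$, and the same reasoning with $x$ and $y$ interchanged treats $y\notin N_2$.

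I do not expect any real obstacle: the proof is a Fubini-type assembly of known facts, and the only point deserving a word of care is that item~(2) cannot be guaranteed for a fixed $x\notin N_1$ but only for a.e.\ $y$ — this is harmless precisely because the exceptional set $M_2$ for ``$BV$ on horizontal lines'' is itself $\L^1$-null, which is why it need not be absorbed into $N_1$.
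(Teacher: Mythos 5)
Your proof is correct and follows essentially the same route the paper sketches in its one-line proof (``follows from Proposition~\ref{DebileDebileDebileDot} and the Fubini theorem''): you apply the proposition along coordinate directions to get the continuity points, invoke the standard $BV$-on-sections theorem for items (1)--(2), and assemble the null sets via Fubini. You have merely made explicit what the paper leaves implicit, in particular spelling out that the Proposition alone supplies only the continuity statement while the $BV$-on-lines part needs the separate AFP section theorem, and correctly noting that the exceptional set for horizontal $BV$-slices concerns $y$ and so need not be absorbed into $N_1$.
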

\begin{proof}
	The claim follows from Proposition~\ref{DebileDebileDebileDot} and the Fubini theorem.
\end{proof}

Let
\begin{equation}\label{BiPolar}
\big(f^+(\cdot)-f^-(\cdot)\big)\otimes v(\cdot)\H^1_{\rceil J_f} = D^jf
\end{equation}
be the standard decomposition of $D^jf$ mentioned in Theorem \ref{FV}. Also by Theorem~\ref{FV} for $\H^1$ almost every $x\in J_f$ it holds that
\begin{equation}\label{StealAndBorrow}
\frac{1}{r}\int_{B(x,r)\cap J_f} |(f^+(y)-f^-(y)\big)\otimes v(y) - (f^+(x)-f^-(x)\big)\otimes v(x)| \, d\H^1_{\rceil J_f}(y) \to 0
\end{equation}
and we call these points Lebesgue points of $(f^+(\cdot)-f^-(\cdot)\big)\otimes v(\cdot)$.

\begin{definition}[Admissible curves for $f$]\label{Admissible}
	Let $f \in BV(Q(0,1), Q(0,1))$. Let $\gamma_{i}:[0,1]\to Q(0,1)\subset \er^2$, $i=1,\dots, K$ be finitely piecewise linear mappings with $\gamma_{i}([0,1])\cap \gamma_{j}([0,1])$ contains at most one point $\{X_{i,j}\}$ for all $i\neq j$. Let $\Gamma = \bigcup_{i=1}^{K} \gamma_{i}([0,1])\subset Q(0,1)$. We call $\Gamma$ admissible for $f$ if
	\begin{enumerate}
		\item $f\circ\gamma_i \in BV((0,1), \er^2))$ for all $i$,
		\item $f_{\rceil \Gamma}$ is continuous at $\gamma_{i}(s_{i,k})$ at every point $s_{i,k}$, where $\{s_{i,k}\}$ is the finite set of endpoints of intervals on which $\gamma_i$ is linear,
		\item $f_{\rceil \Gamma}$ is continuous at each $X_{i,j}$ for every $i,j$ such that $\gamma_{i}([0,1])\cap \gamma_{j}([0,1]) \neq \emptyset$,
		\item$\lim_{r\to 0} r^{-1} |Df|\big(Q(\gamma_i(s_{i,k}), r)\big) = 0$ for each $s_{i,k}$,
		\item$\lim_{r\to 0} r^{-1} |Df|\big(Q(X_{i,j}, r)\big) = 0$ for each $X_{i,j}$.
	\end{enumerate}
\end{definition}

Note that by Proposition~\ref{DebileDebileDebileDot} `almost every' polyline is admissible.

\begin{definition}[Good straight grid]\label{GSGDef}
	Let $f \in BV(Q(0,1), Q(0,1))$ and let $\Gamma = (\bigcup_{i=1}^K\{x_i\}\times[-1,1])\cup(\bigcup_{j=1}^K[-1,1]\times\{y_j\})$ be admissible for $f$ in the sense of Definition~\ref{Admissible}. We call $\Gamma$ a good straight grid for $f$ if
	\begin{enumerate}
		\item every point of $\Gamma \cap J_f$ is a Lebesgue point of $\big(f^+(\cdot)-f^-(\cdot)\big)\otimes v(\cdot)$ in the sense of \eqref{StealAndBorrow},
		\item $\langle(1,0), v(x,y_j)\rangle\neq 0$, $\langle(0,1), v(x_i,y)\rangle\neq 0$ for all $x,y\in [-1,1]$ such that $(x_i,y), (x,y_j) \in \Gamma\cap J_f$, where $v(x,y)$ is the vector from Theorem~\ref{ARO}.
	\end{enumerate}
\end{definition}

\begin{definition}
	Let $f \in BV(Q(0,1), Q(0,1))$ and let $\Gamma \subset Q(0,1)$ be the union of a finite number of horizontal and vertical segments. Let $\tilde{\Gamma}$ be the smallest straight grid such that $\tilde{\Gamma} \supset \Gamma$. Then we say that $\tilde{\Gamma}$ is the straight grid generated by $\Gamma$.
\end{definition}

Now we define the object $\Gamma$ which we refer to as a `non-straight grid'.
\begin{definition}[Good non-straight grid]\label{GNSGDef}
	Let $f \in BV(Q(0,1), Q(0,1))$. Let $\gamma_{i}:[0,1]\to Q(0,1)\subset \er^2$, $i=1,\dots, K$ be finitely piecewise linear mappings with $\gamma_{i}([0,1])\cap \gamma_{j}([0,1])$ contains at most one point $\{X_{i,j}\}$ if $i\neq j$. Let $\Gamma = \bigcup_{i=1}^{K} \gamma_{i}([0,1])$ be admissible for $f$. We call $\Gamma$ a good non-straight grid for $f$ if
	\begin{enumerate}
		\item every point of $\Gamma \cap J_f$ is a Lebesgue point of $\big(f^+(\cdot)-f^+(\cdot)\big)\otimes v(\cdot)$ with respect to $\H^1_{\rceil J_f}$,
		\item the derivative $\gamma_i'(t)$ exists and $\langle\gamma_i'(t), v(\gamma_i(t))\rangle\neq 0$ whenever $\gamma_i(t) \in J_f$ where $v(x,y)$ is the vector from Theorem~\ref{ARO}.
	\end{enumerate}
\end{definition}

	\begin{remark}\label{GGG}
		A simple affine change of variables together with Corollary~\ref{Repeat} proves the following. For every pair of distinct directions $v_1, v_2\in \er^2$, $|v_j|=1$ almost every line $L_1$ parallel to $v_1$ yields an admissible $\Gamma = L_1\cup L_2$ for almost every line $L_2$ parallel to $v_2$. Informally we can say that `almost every' non-straight grid is admissible for $f$. In Theorem~\ref{KulovyBlesk} we show that `almost every' non-straight grid is good for $f$.
	\end{remark}

	In this paper we often restrict a planar $BV$ map onto lines or good grids (both straight and non-straight). Since, for almost every line, the restriction of a $BV$ map is one-dimensional $BV$ on the line, it stands to reason, for a good choice of grid $\Gamma$, that $f_{\rceil \Gamma}$ is also in $BV$. On the other hand, the space $BV(\Gamma)$ is not very standard so we now explain what we mean by this.

	Let $f:Q(0,1) \to \er^2$ and let $\Gamma$ be a good non-straight grid. We say that $f_{\rceil\Gamma}$ is $BV$ on $\Gamma$ if
	\begin{enumerate}
		\item $f\circ\gamma_i \in BV([0,1],\er^2)$
		\item $f_{\rceil\Gamma}$ is continuous at each $X_{i,j}$.
	\end{enumerate}
	By $D_{\tau}f$ we denote the measure on $\Gamma$ given by $\sum_{i=1}^KD_{\tfrac{\gamma_i'}{|\gamma_i'|}}f_{\rceil \gamma_i([0,1])}$. The condition (2) above ensures that $\{X_{i,j}: 1\leq i<j\leq K\}$ is a negligible set in $|D_{\tau} f_{\rceil\Gamma}|$. This means that we can interpret $D_{\tau}$ as the distributional derivative tangential to $\Gamma$ outside the set $\{X_{i,j}: 1\leq i<j\leq K\} \cup \{\gamma_i(s_{i,m}): 1\leq i\leq K\} $, where the points $s_{i,m}$ are those defined in Deifnition \ref{Admissible}.

	If $f\in BV(\Gamma)$ and $|D_{\tau}f_{\rceil\Gamma}|$ is absolutely continuous with respect to $\H^1$ then we say that $f_{\rceil \Gamma}$ is in $W^{1,1}(\Gamma)$.

	As a matter of convention, when we write $\partial_{\tau} f_{\rceil \Gamma}(X)$ we refer to the classical partial derivative of the mapping $f$ in the direction tangential to $\Gamma$ at $X$. We avoid writing this at points $X_{i,j}$ to avoid ambiguity. It is known for $f_{\rceil\Gamma}$ in $BV$ on $\Gamma$ (in the sense described above) that $\partial_{\tau}f_{\rceil \Gamma}$ exists almost everywhere and the absolutely continuous part of $D_{\tau}f_{\rceil \Gamma}$ can be represented by $\partial_{\tau}f_{\rceil \Gamma}\H_{\rceil \Gamma}^1$.
	
	Throughout the paper $\nabla f(x)$ denotes the approximative derivative of $f$ at $x$ which for $f\in  BV(Q(0,1),\er^2)$ exists $\L^2$ almost everywhere for the correct representative.

	Let $\gamma:[0,1] \to Q(0,1)$ be a bi-Lipschitz parametrisation of a curve such that $f\circ\gamma \in BV((0,1),\er^2)$. We define $h:\gamma([0,1])\to \er^2$, the geometric representative of $f$ on $\gamma([0,1])$ as explained below. For each $t$ we define
	$$
		\begin{aligned}
			Y(t) &= \lim_{s\to t^-} f\circ\gamma(s),
			\quad &Z(t) = \lim_{s\to t^+} f\circ\gamma(s),&\\
			l(t) &= |Df\circ\gamma|([0,t)),
			\quad &L(t) = |Df\circ\gamma|([0,t]).&
		\end{aligned}
	$$
	We define the a curve $\tilde{h}:[0,1+L(1)] \to \er^2$ as the constant speed parametrization of the segment $[Y(t)Z(t)]$ from $[l(t)+t, L(t)+t]$. Finally, we define
	\begin{equation}\label{defh}
		h(x) = \tilde{h}\big([1+L(1)]\gamma^{-1}(x)\big)
	\end{equation}
	on $\gamma([0,1])$.

	Let $\Gamma$ be a good non-straight grid for $f$ then we define $h$, the geometric representative of $f$ on $\Gamma$, as follows. For each $1\leq i\leq K$ we find finitely many intervals $[s_{i,m}, s_{i,m+1}]$ covering $[0,1]$ such that $\gamma_i$ is linear on each $[s_{i,m}, s_{i,m+1}]$, $f_{\rceil \Gamma}$ is continuous at each $\gamma_i(s_{i,m})$ and for each $X_{i,j}$ there is an $m$ such that $\gamma(s_{i,m}) = X_{i,j}$. We define the map $h$ on each segment $[\gamma_i(s_{i,m})\gamma_i(s_{i,m+1})]$ as in \eqref{defh}. This gives us a map $h$ defined on the whole of $\Gamma$. When we refer to a segment of $\Gamma$ we mean one of the segments $[\gamma_i(s_{i,m})\gamma_i(s_{i,m+1})]$.

	\begin{definition}[No-Crossing $BV$ condition]\label{DefNCBV}
		Let $f \in BV(Q(0,1), \er^2)$ with $f(x) = x$ on $\partial Q(0,1)$. We say that $f$ satisfies the $NCBV$ condition if for every $\sigma >0$ and for every good straight grid $\Gamma$ for $f$, there exists a continuous injective map $H_{\sigma}:\Gamma \to \er^2$ such that $|h(x) - H_{\sigma}(x)| < \sigma$ for all $x\in \Gamma$, where $h$ is the geometric representative\footnote{See the previous paragraphs for the definition.} of $f$ on $\Gamma$.
	\end{definition}

	\begin{definition}[No-Crossing $BV$+ condition]\label{DefNCBVPlus}
		Let $f \in BV(Q(0,1), \er^2)$ with $f(x) = x$ on $\partial Q(0,1)$. We say that $f$ satisfies the $NCBV^+$ condition if for every $\sigma >0$ and for every good non-straight grid $\Gamma$ for $f$, there exists a continuous injective map $H_{\sigma}:\Gamma \to \er^2$ such that $|h(x) - H_{\sigma}(x)| < \sigma$ for all $x\in \Gamma$, where $h$ is the geometric representative\footnote{See the previous paragraphs for the definition.} of $f$ on $\Gamma$.
	\end{definition}

	The following lemma was published in \cite{CKR}. We include its proof here for the convenience of the reader.
	\begin{lemma}\label{Repetition}	
		Let $X,Y\in \R^2$, $\epsilon, \delta\in [0,1]$ and $L:=\abs{X-Y}>0$. Let $C\in B(X,\delta L)$ and $D\in B(Y,\delta L).$
		Let $\eta:[0,1]\to \R^2$ be a path (with constant speed parametrisation) joining points $C$ and $D$ with arc length
		$l(\eta)\leq (1+\epsilon)L.$ Let $\gamma:[0,1]\to \R^2$ be the constant speed parametrisation of the line segment joining $X$ and $Y$. Then for every $t\in [0,1]$ $\abs{\eta(t)-\gamma(t)}\leq \sqrt{3\epsilon+12\delta} L.$
	\end{lemma}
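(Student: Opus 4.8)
The plan is to compare $\eta(t)$ with the point $M := (1-t)C + tD$ that divides the chord $[C,D]$ in the ratio $t:(1-t)$, and then compare $M$ with $\gamma(t) = (1-t)X + tY$. The second comparison is immediate: $|M - \gamma(t)| = |(1-t)(C-X) + t(D-Y)| \le (1-t)\delta L + t\delta L = \delta L$. So everything reduces to estimating $|\eta(t) - M|$.

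For the first comparison I would use only two facts about $\eta$. Since $\eta$ has constant speed, the subarc $\eta_{\rceil [0,t]}$ has length $t\,l(\eta) \le t(1+\epsilon)L$ and $\eta_{\rceil [t,1]}$ has length $(1-t)\,l(\eta) \le (1-t)(1+\epsilon)L$; chords are shorter than arcs, so $|\eta(t)-C| \le t(1+\epsilon)L$ and $|\eta(t)-D| \le (1-t)(1+\epsilon)L$. Writing $\ell := |C-D|$, the triangle inequality gives $\ell \ge L - 2\delta L$. Now the cevian-length identity (Stewart's theorem), applied to the triangle with vertices $\eta(t), C, D$ and the point $M$ on $[C,D]$, reads
$$|\eta(t)-M|^2 = (1-t)|\eta(t)-C|^2 + t|\eta(t)-D|^2 - t(1-t)\ell^2 ;$$
one may also just verify this directly in coordinates where $C$ is the origin and $D$ lies on a coordinate axis. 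Substituting the three bounds above yields $|\eta(t)-M|^2 \le t(1-t)\big[(1+\epsilon)^2L^2 - \ell^2\big]$, and the point of the identity is precisely that it produces the favourable factor $t(1-t) \le \tfrac14$.

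To finish, suppose first $\delta \le \tfrac12$, so that $\ell^2 \ge (1-2\delta)^2L^2$. Since $\epsilon \le 1$ we have $(1+\epsilon)^2 - (1-2\delta)^2 = 2\epsilon + \epsilon^2 + 4\delta - 4\delta^2 \le 3\epsilon + 4\delta$, hence $|\eta(t)-M|^2 \le \tfrac14(3\epsilon+4\delta)L^2$. Combining with $|M-\gamma(t)| \le \delta L$ through $(a+b)^2 \le 2a^2 + 2b^2$ gives $|\eta(t)-\gamma(t)|^2 \le \tfrac12(3\epsilon+4\delta)L^2 + 2\delta^2 L^2 \le (3\epsilon + 12\delta)L^2$, using $\delta \le 1$ to bound $2\delta^2 \le 2\delta$. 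If instead $\delta > \tfrac12$, then $\sqrt{3\epsilon + 12\delta} > \sqrt 6 > 2$, while the crude bound $|\eta(t)-M| \le (1-t)|\eta(t)-C| + t|\eta(t)-D| \le 2t(1-t)(1+\epsilon)L \le L$ together with $|M - \gamma(t)| \le \delta L \le L$ gives $|\eta(t)-\gamma(t)| \le 2L < \sqrt{3\epsilon+12\delta}\,L$. The main (essentially the only) point requiring thought is the use of the Stewart identity to extract the factor $t(1-t)$; the rest is bookkeeping, and the split at $\delta = \tfrac12$ is needed only because $(1-2\delta)^2$ stops being a valid lower bound for $\ell^2/L^2$ beyond that point.
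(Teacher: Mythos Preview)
Your proof is correct and takes a genuinely different route from the paper's. The paper places $X$ at the origin and $Y=(L,0)$, notes that at least one of the two angles $\sphericalangle(\eta(t),\gamma(t),Y)$ or $\sphericalangle(X,\gamma(t),\eta(t))$ is obtuse, and applies the law of cosines to the corresponding triangle to obtain $|\eta(t)-X|^2 \ge |\gamma(t)-X|^2 + E^2$ (with $E=|\eta(t)-\gamma(t)|$); combining this with $|\eta(t)-X|\le l_1+\delta L\le (1+\epsilon)tL+\delta L$ yields $E^2\le (3\epsilon+12\delta)L^2$ directly. Your argument instead routes through the chord point $M=(1-t)C+tD$ and uses Stewart's identity to produce the factor $t(1-t)\le \tfrac14$, which cleanly avoids the obtuse-angle case split at the cost of a separate (trivial) case $\delta>\tfrac12$ and an extra triangle-inequality step from $M$ to $\gamma(t)$. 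Both are elementary; yours is slightly more systematic and in fact gives a sharper intermediate constant, while the paper's is a bit shorter once the right obtuse angle is identified.
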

	\begin{proof}
		We assume without loss of generality that $X$ is the origin and $Y=(L,0).$ 
		Fix a point $t\in [0,1].$  We use the following notation $l_1=l(\eta_{\rceil[0,t]})$ and $l_2=l(\eta_{\rceil[t,1]})$ and thus $l_1+l_2=l(\eta)\leq (1+\epsilon)L.$
		At least one of the angles $\sphericalangle(\eta(t),(tL,0),(L,0))$ and $\sphericalangle((0,0),(tL,0),\eta(t))$ is clearly in the range $[\pi/2,\pi].$ We give the proof in the case where $\sphericalangle(\eta(t),(tL,0),(L,0)) \in [\pi/2,\pi].$ The proof in the other case is essentially the same. 
		
		We denote the distance between $\eta(t)$ and $(tL,0)$ by $E$ and the distance between $(0,0)$ and $\eta(t)$ by $D.$ By the assumption $\sphericalangle(\eta(t),(tL,0),(L,0)) \in [\pi/2,\pi]$ we obtain from law of cosines that
		\begin{equation}
		\label{kosinilause}
		l_1+\delta\geq D \geq \sqrt{L^2t^2+E^2}.
		\end{equation}
		
		On the other hand, we have $l_1=l(\eta)t\leq (1+\epsilon)Lt+\delta L.$ Combining this with \eqref{kosinilause} and simplifying results in the estimate
		$$
		E^2\leq (3\epsilon+12\delta)L^2
		$$
	\end{proof}

	\begin{definition}[Good arrival grids]\label{good arrival grid}
		Let the mapping $f\in BV(Q(0,1); Q(0,1))$, let $\Gamma$ be a good starting grid for $f$ and let $\gamma$ be the geometrical representative of $f$ on $\Gamma$. Let $\kappa>0$, let the numbers $-1=w_0<w_1<w_2\, \cdots\, ,\,< w_{N+1}=1$ and $-1=z_0<z_1<z_2\, \cdots < z_{M+1}=1$ satisfy $w_{n+1}-w_n<\kappa$ and $z_{m+1}-z_m<\kappa$ for every $0\leq n\leq N$ and $0\leq m\leq M$. We say that
		\begin{equation}\label{defgag}
		\G = \bigcup_{n=0}^{N+1} \{w_n\}\times [-1,1]\ \cup\ \bigcup_{m=0}^{M+1} [-1,1] \times \{z_m\}\subseteq Q(0,1)
		\end{equation}
		is a \emph{good arrival grid for $f$ associated with $\Gamma$ and with side-length $\kappa$} if $P:=\gamma^{-1}(\G)\cap \Gamma$ is a finite set and for every $p\in P$ it holds that
		\begin{itemize}
			\item[$\cdot$] $p$ is not a cross of the grid $\Gamma$ (i.e. a point $(x_i, y_j)$),
			\item[$\cdot$] $\gamma(p)$ is not a cross of the grid $\G$ (i.e. a point $(w_n, z_m)$),
			\item[$\cdot$] $p$ is a point where the derivative $\partial_{\tau}\gamma(p)$ exists and $\partial_{\tau}\gamma(p) \neq 0$,
			\item[$\cdot$] $\partial_\tau\gamma(p)$ is not parallel to the side of $\G$ containing $\gamma(p)$.
		\end{itemize}
	\end{definition}

	An important fact is that good arrival grids always exist. More precisely, we have the following property, whose proof is a simple variant of the proof of~\cite[Lemma~3.6]{PP} and can be found in~\cite[Lemma~4.4]{CPR}. 
	
	\begin{lemma}\label{ArrivalGrid}
		Let $f\in BV(Q(0,1); Q(0,1))$ and let $\Gamma$ be a good starting grid for $f$ in $Q(0,1)$. Then the geometrical representative $\gamma$ of $f$ on $\Gamma$ is in $W^{1,1}(\Gamma, Q(0,1))$. Moreover, there exists $\bar{\kappa} = \bar{\kappa}(L) >0$ such that for any $0 < \kappa < \bar \kappa$ and any $\Sigma \subset \Gamma$ $\H^1$-negligible set, there exists a good arrival grid $\G$ for $f$ associated with $\Gamma$, with side-length $\kappa$, and such that $\gamma^{-1}(\G)\cap \Sigma=\emptyset$.
	\end{lemma}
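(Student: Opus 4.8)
The statement has two parts, and I would treat them in turn.

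For the first — the regularity $\gamma\in W^{1,1}(\Gamma,Q(0,1))$ of the geometric representative — I would simply unwind the construction \eqref{defh}. On a single segment $S=[\gamma_i(s_{i,m})\gamma_i(s_{i,m+1})]$ of $\Gamma$, parametrised affinely, the curve $\tilde h$ of \eqref{defh} is the constant-speed traversal of the image of $f_{\rceil S}$ with each jump replaced by the segment $[Y(t)Z(t)]$, run over a parameter interval of length $1+L(1)$ which is at least the total length $L(1)=|D_\tau f_{\rceil S}|(S)$ of that filled curve; the speed is thus everywhere $\leq 1$ (equal to $1$ on the inserted jump-segments), so $\tilde h$, and therefore $\gamma$ on $S$, is $1$-Lipschitz. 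Since $\Gamma$ is admissible, $f_{\rceil\Gamma}$ is continuous at the finitely many crosses $X_{i,j}$ and corners $\gamma_i(s_{i,m})$, where $\gamma$ agrees with $f_{\rceil\Gamma}$, so the Lipschitz pieces glue to a continuous map $\gamma\colon\Gamma\to\overline{Q(0,1)}$ that is Lipschitz off a finite, hence $\H^1$-negligible, subset of $\Gamma$. Consequently $\gamma\in BV(\Gamma)$ with $|D_\tau\gamma|\ll\H^1_{\rceil\Gamma}$, i.e.\ $\gamma\in W^{1,1}(\Gamma)$.

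For the second part I would run the one-dimensional coarea/Sard selection in the target, following \cite[Lemma~3.6]{PP} (see also \cite[Lemma~4.4]{CPR}). Write $\gamma=(\gamma_1,\gamma_2)$; by the first part each $\gamma_k$ is absolutely continuous on every segment of $\Gamma$, hence has the Luzin (N) property and satisfies the area and coarea formulas for absolutely continuous functions. From these one obtains a full-measure set $G_1\subseteq(-1,1)$ such that for $w\in G_1$ the level set $\gamma_1^{-1}(w)$ is finite, disjoint from the finite set of crosses and corners of $\Gamma$, consists of points at which $\partial_\tau\gamma$ exists with $\partial_\tau\gamma_1\neq0$ (so there $\partial_\tau\gamma\neq0$ and is not parallel to $(0,1)$), and avoids $\gamma_1(\Sigma)$ — the latter being $\L^1$-null by Luzin (N). Given $\kappa>0$ I pick $-1=w_0<w_1<\cdots<w_{N+1}=1$ with consecutive gaps $<\kappa$ and $w_1,\dots,w_N\in G_1$, which is possible for every $\kappa>0$ since $G_1$ is dense; the threshold $\bar\kappa=\bar\kappa(L)$ (with $L$ the length of $\gamma$) only serves to make the resulting grid fine enough for subsequent use. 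Once the $w_n$ are fixed, $A:=\bigcup_{n}\gamma_2\big(\gamma_1^{-1}(w_n)\big)$ is finite, and I choose in the same way $-1=z_0<z_1<\cdots<z_{M+1}=1$ with gaps $<\kappa$ and each $z_m$ in the analogous full-measure set $G_2$ for $\gamma_2$ and, additionally, $z_m\notin A\cup\gamma_2(\Sigma)$. With $\G$ as in \eqref{defgag}, the set $P=\gamma^{-1}(\G)=\bigcup_n\gamma_1^{-1}(w_n)\cup\bigcup_m\gamma_2^{-1}(z_m)$ is finite; no $p\in P$ is a cross of $\Gamma$; no $\gamma(p)$ is a cross $(w_n,z_m)$ of $\G$, since $z_m\notin A$; every $p\in P$ lies over exactly one side of $\G$, at which $\partial_\tau\gamma(p)$ exists, is nonzero and is transversal to that side; and $\gamma^{-1}(\G)\cap\Sigma=\emptyset$ because the $w_n$ avoid $\gamma_1(\Sigma)$ and the $z_m$ avoid $\gamma_2(\Sigma)$. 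Hence $\G$ is a good arrival grid for $f$ associated with $\Gamma$, of side-length $\kappa$, with $\gamma^{-1}(\G)\cap\Sigma=\emptyset$.

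The only point beyond bookkeeping is the Sard-type statement that for almost every level the preimage is finite and meets only points where the tangential derivative exists, is nonzero, and is transversal to the corresponding grid line; with the $W^{1,1}$-regularity of $\gamma$ from the first part in hand this is the standard one-dimensional coarea argument, and the remaining work is the two-stage selection — the $w_n$ first, the $z_m$ depending on them — that keeps $P$ away from the crosses of $\G$.
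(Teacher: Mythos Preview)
Your argument is correct and follows precisely the route the paper indicates: the paper does not give its own proof but defers to \cite[Lemma~3.6]{PP} and \cite[Lemma~4.4]{CPR}, and your two-stage Sard/coarea selection of the levels $w_n$ and then $z_m$ (using the Lipschitz regularity of the geometric representative that you establish in the first part) is exactly that argument. One cosmetic point: after the reparametrisation $h(x)=\tilde h\big([1+L(1)]\gamma^{-1}(x)\big)$ the map is Lipschitz on each segment but not literally $1$-Lipschitz; this does not affect the $W^{1,1}$ conclusion or the rest of your proof.
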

	
	We define the concept of the generalized segment, already introduced in \cite{PP}, which will be useful throughout the proof of Theorem \ref{main}.
	
	\begin{definition}[generalized segments]\label{generalized segments}
		Let $\mathcal{G} \subset Q(0,1)\subset \er^2$ be a grid (the finite union of horizontal and vertical lines). Let $R$ be a rectangle of the grid $\Gamma$ (the closure of a component of $Q(0,1) \setminus \Gamma$). Let $X\neq Y$ and $X,Y\in\partial R \subset \G$. Given $\xi >0$ a small parameter, the \emph{generalized segment} $[XY]$ between $X$ and $Y$ in $R$ is defined as the standard segment $[XY]$ if the two points are not in the same side of $\partial R$; otherwise, $[XY]$ is the union of two segments of the form $[XM]$ and $[MB]$ where $M$ is the point inside $R$ whose distance from the side containing $X$ and $Y$ is $\xi|X-Y|/2$ and the projection of $M$ on the segment $[XY]$ is the mid-point of $[XY]$.
	\end{definition}
	
	The following claim about generalized segments holds.
	
	\begin{prop}\label{EverythingIDo}
		Let $R\subset \er^2$ be a rectangle and let $a,b \in \partial R$. Let $S$ be a generalized segment from $a$ to $b$ in $R$ with parameter $\xi >0$ and let $\tilde{S} \subset S$ be a closed and connected subset of $S$. Then
		$$
		\H^1(\tilde S) \leq (1+\xi)\diam(\tilde{S})
		$$
	\end{prop}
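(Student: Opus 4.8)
The plan is to reduce Proposition~\ref{EverythingIDo} to the only genuinely non-trivial configuration and then carry out a short planar computation. First I would dispose of the case in which $a$ and $b$ do not lie on the same side of $\partial R$: by Definition~\ref{generalized segments} the generalized segment $S$ is then the honest straight segment $[ab]$, every closed connected subset $\tilde S\subset S$ is again a straight segment (a closed subinterval, possibly a point), and hence $\H^1(\tilde S)=\diam(\tilde S)\le(1+\xi)\diam(\tilde S)$ trivially. So from now on I would assume $a,b$ lie on the same side of $\partial R$, so that $S=[aM]\cup[Mb]$ with $M$ the apex prescribed in Definition~\ref{generalized segments}.

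Next I would reduce to the case $M\in\tilde S$. Since $S$ is an arc, if $M\notin\tilde S$ then the connected set $\tilde S$ is contained entirely in one of the two segments $[aM]$, $[Mb]$, so it is again a straight segment and the estimate is trivial as above. If $M\in\tilde S$, then $\tilde S\cap[aM]$ and $\tilde S\cap[Mb]$ are closed connected subsets of segments, each containing the endpoint $M$, hence of the form $[pM]$ and $[Mq]$ with $p\in[aM]$ and $q\in[Mb]$ (degenerate choices $p=M$ or $q=M$ allowed). Thus $\tilde S=[pM]\cup[Mq]$ and $\H^1(\tilde S)=|p-M|+|M-q|$.

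Then comes the computation. I would choose coordinates so that the side of $R$ containing $a$ and $b$ lies on the $x$-axis with $R$ in the upper half-plane, and write $\ell:=|a-b|$, $a=(-\ell/2,0)$, $b=(\ell/2,0)$; by Definition~\ref{generalized segments} this forces $M=(0,\xi\ell/2)$. Writing $p=M+\alpha(a-M)$ and $q=M+\beta(b-M)$ with $\alpha,\beta\in[0,1]$, and using $a-M=(-\ell/2,-\xi\ell/2)$, $b-M=(\ell/2,-\xi\ell/2)$, one gets $|p-M|=\alpha\frac{\ell}{2}\sqrt{1+\xi^2}$, $|M-q|=\beta\frac{\ell}{2}\sqrt{1+\xi^2}$, and $|p-q|=\frac{\ell}{2}\sqrt{(\alpha+\beta)^2+\xi^2(\alpha-\beta)^2}$. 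I would then finish by the chain
\[
\H^1(\tilde S)=(\alpha+\beta)\tfrac{\ell}{2}\sqrt{1+\xi^2}\le(\alpha+\beta)\tfrac{\ell}{2}(1+\xi)\le(1+\xi)\tfrac{\ell}{2}\sqrt{(\alpha+\beta)^2+\xi^2(\alpha-\beta)^2}=(1+\xi)\,|p-q|\le(1+\xi)\diam(\tilde S),
\]
where the first inequality is $\sqrt{1+\xi^2}\le1+\xi$ (equivalently $0\le2\xi$), the second uses $\xi^2(\alpha-\beta)^2\ge0$, and the last uses $p,q\in\tilde S$. There is no real obstacle here: the only thing requiring attention is organizing the case split according to whether $\tilde S$ contains the apex $M$, after which everything is a one-line elementary estimate.
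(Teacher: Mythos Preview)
Your proof is correct and follows essentially the same approach as the paper: both reduce to the two-segment case, place the configuration in coordinates so that $S$ is the graph of $x\mapsto\xi|x|$, and then use $\sqrt{1+\xi^2}\le 1+\xi$. The only difference is that the paper avoids your case split on whether $M\in\tilde S$ by observing directly that $\H^1(\tilde S)=\sqrt{1+\xi^2}\,\diam(\pi(\tilde S))$ for the horizontal projection $\pi$ (valid for any connected $\tilde S$, regardless of whether it contains the apex) and that $\diam(\pi(\tilde S))\le\diam(\tilde S)$; this is a slightly slicker packaging of exactly the computation you carried out.
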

	\begin{proof}
		If $S$ is a segment the claim is immediate. In fact, $\H^1(\tilde S) \leq \diam(\tilde{S})$. If $S$ is the union of 2 segments, then after rotation and translation we can interpret $S$ as the graph of the function $\xi|x|$. We have $\diam (\tilde{S}) \geq \diam (\pi(\tilde{S}))$, where $\pi$ is the projection onto the horizontal axis. Using the area formula
		$$
		\H^1(\tilde{S}) = \sqrt{1+\xi^2}\diam(\pi(\tilde{S})) \leq (1+\xi)\diam(\tilde{S}).
		$$ 
	\end{proof}

\subsection{Equivalence of $NCBV$ and $NCBV^+$ conditions}

\begin{prop}\label{Obvious}
	Let $f\in BV(Q(0,1))$ be an $NCBV+$ map, then $f$ is an $NCBV$ map.
\end{prop}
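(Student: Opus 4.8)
The plan is to exploit that $NCBV^+$ is by design a formally stronger condition than $NCBV$: the two conditions differ only in the family of grids over which the uniform injective approximation of the geometric representative is demanded, and every good straight grid is a particular instance of a good non-straight grid. So the whole proof reduces to checking that a good straight grid $\Gamma$ for $f$, once its horizontal and vertical segments are parametrized in the obvious affine way, satisfies all the clauses of Definition~\ref{GNSGDef}, and that the geometric representative of $f$ on $\Gamma$ is the same object whether $\Gamma$ is read as a straight or a non-straight grid.

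Concretely, first I would write $\Gamma = \bigl(\bigcup_{i}\{x_i\}\times[-1,1]\bigr)\cup\bigl(\bigcup_{j}[-1,1]\times\{y_j\}\bigr)$ with the $x_i$ (resp.\ $y_j$) distinct, and parametrize each of its segments by an affine bijection $\gamma_\ell:[0,1]\to Q(0,1)$ onto that segment; each $\gamma_\ell$ is finitely piecewise linear, injective and continuous, its derivative is a constant nonzero vector parallel to $(1,0)$ or $(0,1)$, and two distinct images meet in at most the single cross point $(x_i,y_j)$ (for a vertical--horizontal pair) or not at all. Next I would transfer admissibility: being a good straight grid already includes $\Gamma$ being admissible in the sense of Definition~\ref{Admissible}, and the five admissibility clauses are statements about $\Gamma$ and the $\gamma_\ell$ which are unaffected by the relabelling. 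Then I would check the two extra clauses of Definition~\ref{GNSGDef}: clause (1), that each point of $\Gamma\cap J_f$ is a Lebesgue point of $(f^+-f^-)\otimes v$ with respect to $\H^1_{\rceil J_f}$, is exactly clause (1) of Definition~\ref{GSGDef}; and clause (2), that $\gamma_\ell'(t)$ exists with $\langle\gamma_\ell'(t),v(\gamma_\ell(t))\rangle\neq 0$ whenever $\gamma_\ell(t)\in J_f$, holds because on a vertical (resp.\ horizontal) segment $\gamma_\ell'$ is a nonzero multiple of $(0,1)$ (resp.\ $(1,0)$), so this pairing is a nonzero multiple of $\langle(0,1),v(x_i,y)\rangle$ (resp.\ $\langle(1,0),v(x,y_j)\rangle$), which is nonzero by clause (2) of Definition~\ref{GSGDef}.

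With this in hand, $\Gamma$ is a good non-straight grid for $f$, and the geometric representative $h$ of $f$ on $\Gamma$ obtained from the construction around~\eqref{defh} is the same object under either reading of $\Gamma$, since that construction only uses the segments of $\Gamma$ together with a bi-Lipschitz parametrization of each, and the affine $\gamma_\ell$ above are such parametrizations. Since $f$ is $NCBV^+$, for every $\sigma>0$ there is a continuous injective $H_\sigma:\Gamma\to\er^2$ with $|h(x)-H_\sigma(x)|<\sigma$ on $\Gamma$, which is precisely what the $NCBV$ condition requires for the good straight grid $\Gamma$; as $\Gamma$ and $\sigma$ were arbitrary, $f$ is $NCBV$. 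I do not anticipate a real obstacle: the only delicate point is the clause-by-clause bookkeeping of the previous paragraph --- in particular clause (2), the non-vanishing of $\langle\gamma_\ell',v\rangle$ at jump points --- together with the routine observation that the two descriptions of the geometric representative agree on a straight grid, so that the maps supplied by $NCBV^+$ are literally those needed for $NCBV$.
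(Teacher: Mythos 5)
Your proof is correct and follows the same approach as the paper, which simply observes that every good straight grid is a good non-straight grid; you have merely spelled out the clause-by-clause verification that the paper calls ``evident.''
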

\begin{proof}
	It suffices to see that every good straight grid for $f$ is also a good non-straight grid for $f$, which is evident.
\end{proof}

In the following lemma we refer by $N_1,N_2\subset [-1,1]$ to the null sets from Corollary~\ref{Repeat}. Further, for all $(y_1,y_2,x_1,x_2)\in[-1,1]^4$ we define the (boundary of a) rectangle $R_{(y_1,y_2,x_1,x_2)} = [(x_1,y_1)(x_2,y_1)]\cup[(x_2,y_1)(x_2,y_2)]\cup[(x_2,y_2)(x_1,y_2)]\cup[(x_1,y_2)(x_1,y_1)]$.

\begin{lemma}\label{NoName}
	Let $f\in BV(Q(0,1))$ be an NCBV map and let $\sigma > 0$. Then for every point $(x_0,y_0) \in Q(0,1)$ where
	\begin{equation}\label{Lock}
	\liminf_{r\to 0} r^{-1}|Df|(Q((x_0,y_0),r)) \leq \tfrac{1}{34}\sigma
	\end{equation}
	there exists a sequence of positive numbers $r_n \to 0$ and a sequence of sets $A_n \subset ((y_0-r_n,y_0 -\tfrac{1}{2}r_n)\setminus N_1)\times ((y_0+\tfrac{1}{2}r_n,y_0 +r_n)\setminus N_1)\times((x_0-r_n,x_0 -\tfrac{1}{2}r_n)\setminus N_2)\times ((x_0+\tfrac{1}{2}r_n,x_0 +r_n)\setminus N_2)$ with $\L^4(A_n)>0$ and for any $(y_1,y_2,x_1,x_2) \in A_n$ it holds that the straight grid generated by $R_{(y_1,y_2,x_1,x_2)}$ is good for $f$ and
	\begin{equation}\label{Nuts}
		|D_{\tau}f_{\rceil R_{(y_1,y_2,x_1,x_2)}}|(R_{(y_1,y_2,x_1,x_2)}) < \tfrac{1}{4}\sigma.
	\end{equation}
\end{lemma}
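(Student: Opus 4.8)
The plan is to exhibit, for a suitable sequence $r_n\to 0$, a positive-measure set of tuples $(y_1,y_2,x_1,x_2)$ whose associated rectangle boundary $R_{(y_1,y_2,x_1,x_2)}$ has small tangential variation and generates a good straight grid. The starting point is the hypothesis \eqref{Lock}: by definition of $\liminf$ there is a sequence $r_n\to 0$ with $r_n^{-1}|Df|(Q((x_0,y_0),r_n))\le \tfrac{1}{33}\sigma$ (say), so along this sequence the total variation in the annular region is quantitatively controlled.

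First I would slice the variation using the one-dimensional BV structure. For each $r_n$, apply the coarea/Fubini-type slicing (Corollary~\ref{Repeat} and the BV-on-lines characterization) to the four one-parameter families of segments: horizontal segments $[-1,1]\times\{y_1\}$ with $y_1\in(y_0-r_n,y_0-\tfrac12 r_n)$, horizontal segments with $y_2\in(y_0+\tfrac12 r_n,y_0+r_n)$, vertical segments with $x_1\in(x_0-r_n,x_0-\tfrac12 r_n)$, and vertical segments with $x_2\in(x_0+\tfrac12 r_n,x_0+r_n)$. For the horizontal slices, $\int_{y_0-r_n}^{y_0-\frac12 r_n}|D_{\tau}(f_{\rceil [x_0-r_n,x_0+r_n]\times\{y\}})|\big([x_0-r_n,x_0+r_n]\big)\,dy \le |Df|(Q((x_0,y_0),r_n))$, and similarly for the other three families; here one restricts the segments to length $2r_n$ so that the four relevant pieces of $\partial R$ all lie inside $Q((x_0,y_0),r_n)$. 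Hence by Chebyshev, for each of the four families the set of parameters for which the corresponding one-dimensional variation exceeds $\tfrac{1}{16}\sigma$ has measure at most $\tfrac{1}{16}\sigma \cdot \tfrac{2 r_n^{-1}}{\sigma}\cdot\tfrac12 r_n = \tfrac{1}{16}$ of the available interval of length $\tfrac12 r_n$; choosing the threshold slightly more carefully (there are four sides, and the constants in \eqref{Lock} and \eqref{Nuts} have been arranged with a factor $34$ against $4$) the bad set in each coordinate is a strict fraction of the interval, so the set $B_n$ of tuples with $\sum_{\text{4 sides}}(\text{1-d variation}) < \tfrac14\sigma$ has $\L^4(B_n)>0$. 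Since $|D_{\tau}f_{\rceil R}|(R)$ is exactly the sum of the tangential variations over the four sides, this gives \eqref{Nuts} on $B_n$.

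Next I would intersect $B_n$ with the full-measure set of tuples for which the generated straight grid is good for $f$. By Remark~\ref{GGG} and Corollary~\ref{Repeat}, almost every choice of the two horizontal levels $y_1,y_2$ and the two vertical levels $x_1,x_2$ (avoiding $N_1$, resp. $N_2$) yields lines on which $f$ restricts to a BV map, with $f_{\rceil \Gamma}$ continuous at the four corners and $r^{-1}|Df|(Q(\cdot,r))\to 0$ there (Proposition~\ref{DebileDebileDebileDot}), so that the grid is admissible; and by the Lebesgue-point statement \eqref{StealAndBorrow} together with Theorem~\ref{FV} and Theorem~\ref{ARO}, for a.e. such choice every point of $\Gamma\cap J_f$ is a Lebesgue point of $(f^+-f^-)\otimes v$ and the transversality condition $\langle(1,0),v\rangle\ne 0$ on horizontal lines, $\langle(0,1),v\rangle\ne 0$ on vertical lines, holds (the set of levels violating transversality is $\H^1$-null because $v$ is, up to null sets, never axis-parallel on a full-measure set of the rectifiable set $J_f$, and Fubini transfers this). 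Intersecting this co-null set with $B_n$ leaves a set $A_n$ of positive $\L^4$-measure contained in the prescribed product of annular intervals minus the null sets, and every tuple in $A_n$ satisfies both conclusions.

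The main obstacle I anticipate is the bookkeeping of constants in the slicing step: one must split the single scalar bound $r_n^{-1}|Df|(Q((x_0,y_0),r_n))\le \tfrac{1}{34}\sigma$ among the four sides and against the factor $\tfrac12$ coming from the restriction to the outer half-annulus (the intervals have length $\tfrac12 r_n$, not $r_n$), while still leaving room for a positive-measure good set — this is exactly why the statement uses $\tfrac{1}{34}$ on the left of \eqref{Lock} and $\tfrac14$ on the right of \eqref{Nuts}. A secondary technical point is verifying that the transversality and Lebesgue-point conditions defining a good straight grid are satisfied for a.e. tuple; this is routine given Theorems~\ref{FV}, \ref{ARO} and Fubini, but needs to be stated carefully because $J_f$ may meet the chosen lines in a genuinely one-dimensional set. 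Everything else is a direct application of the BV-on-lines structure and Chebyshev's inequality.
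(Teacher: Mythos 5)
Your proposal is correct and follows essentially the same route as the paper's proof: extract a sequence $r_n$ from the $\liminf$, slice the variation by Fubini/disintegration over the four one-parameter families of segments, apply Chebyshev with threshold $\tfrac{1}{16}\sigma$ per side to get a positive-measure set of parameter tuples, and intersect with the co-null set of tuples generating a good grid. Your arithmetic in the Chebyshev bound is a bit loose (the exact bad-set fraction comes out to $\tfrac{16}{33}$ of a half-interval of length $\tfrac12 r_n$, leaving a good set of measure $\ge\tfrac{1}{66}r_n$), but you correctly flag this and the constants work out; in fact you are somewhat more explicit than the paper in verifying the goodness (Lebesgue-point and transversality conditions of Definition~\ref{GSGDef}), where the paper only invokes admissibility via Proposition~\ref{DebileDebileDebileDot}.
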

\begin{proof}
	By \eqref{Lock}, we find a sequence $r_n \to 0$ so that $|Df|(Q((x_0,y_0),r_n))< \tfrac{1}{33}\sigma r_n$. We denote $C_{y,n} = [x_0-r_n, x_0+r_n] \times \{y\}$ for any $y\in[y_0-r_n, y_0+r_n]$. By a standard disintegration argument (see \cite[Theorem 2.28]{AFP} and \cite[Theorem 3.107]{AFP}) and Lemma~\ref{Stupido} we have
	$$
	\begin{aligned}
	\frac{r_n\sigma}{33}
	&> |Df|(Q((x_0,y_0),r_n))\\
	&\geq |\langle Df, (1,0)\rangle|(Q((x_0,y_0),r_n))\\
	& = \int_{y_0-r_n}^{y_0+r_n} |D_{\tau}f_{\rceil C_{y,n}}| (C_{y,n}) dy.
	\end{aligned}
	$$
	We use the argument
	$$
	\int_A |D_{\tau}f_{\rceil C_{y,n}}| (C_{y,n})\, dy \geq \lambda \L^1(\{y \in A; |D_{\tau}f_{\rceil C_{y,n}}| (C_{y,n}) \geq \lambda\})
	$$
	for both $A = (y_0 -r_n, y_0-\tfrac{1}{2}r_n)$ and for $A  =(y_0+ \tfrac{1}{2} r_n, y_0+r_n)$ to get
	$$
	\begin{aligned}
	\L^1(\{t \in A; |D_{\tau}f_{\rceil C_{y,n}}| (C_{y,n}) < \lambda\}) &=\frac{1}{2} r_n - \L^1(\{t \in A; |D_{\tau}f_{\rceil C_{y,n}}| (C_{y,n}) \geq \lambda\})\\
	&\geq \tfrac{1}{2}r_n -  \frac{1}{\lambda} \int_A |D_{\tau}f_{\rceil C_{y,n}}| (C_{y,n})\, dy \\
	&> \tfrac{1}{2}r_n - \frac{r_n\sigma}{33\lambda}.
	\end{aligned}
	$$
	Choose $\lambda = \tfrac{1}{16}\sigma$, then
	\begin{equation}\label{Skiing}
	\L^1(\{t \in A; |D_{\tau}f_{\rceil C_{y,n}}| (C_{y,n}) <\tfrac{1}{16}\sigma\}) >0.
	\end{equation}
	The same estimates hold also for vertical lines.
	
	Call $A_n$ the subset of $((y_0-r_n,y_0 -\tfrac{1}{2}r_n)\setminus N_1)\times ((y_0+\tfrac{1}{2}r_n,y_0 +r_n)\setminus N_1)\times((x_0-r_n,x_0 -\tfrac{1}{2}r_n)\setminus N_2)\times ((x_0+\tfrac{1}{2}r_n,x_0 +r_n)\setminus N_2)$ whose each component satisfies \eqref{Skiing}. Then $A_n$ has positive $\L^4$ measure. Without loss of generality we may assume that each point of $A_n$ is a Lebesgue point of the set (with respect to $\L^4$). By Proposition~\ref{Repeat} have that (almost every) $[-1,1]\times\{x_1\}\cup[-1,1]\times\{x_2\}\cup\{y_1\}\times[-1,1]\cup\{y_2\}\times[-1,1]$ is admissible for $f$. 
\end{proof}

\begin{prop}\label{Crossroads}
	Let $f\in BV(Q(0,1),\er^2)$ be an NCBV map, let $\sigma > 0$ and let $(x_0,y_0)\in Q(0,1)$ satisfy
	\begin{equation}\label{Twisted}
	\lim_{r\to 0} r^{-1}|Df|\big(Q((x_0,y_0),r)\big) =0.
	\end{equation}
	Let $v_1, \dots, v_4\in \er^2$ be distinct vectors with $|v_i|  = 1$ and call the rays $\gamma_i = (x_0,y_0) + v_i[0,\infty)$. Then there exists a constant $\delta >0$ depending on the vectors $v_i$ and an $r_0 = r_0(\sigma)>0$ such that for any $0<r\leq r_0$ there exists a set $A_r\subset Q((x_0,y_0),\delta r)$ with $\L^2(A_r) >0$ and for every $(\tilde{x},\tilde{y})\in A_r$ there exist four polylines $\tilde{\gamma}_1, \tilde{\gamma}_2, \tilde{\gamma}_3,\tilde{\gamma}_4 \subset Q((x_0,y_0), r)$ with the following properties
	\begin{enumerate}
		\item the point $(\tilde{x}, \tilde{y})$ is an endpoint of all $\tilde{\gamma}_i$, $i=1,\dots,4$ and the second endpoint of $\tilde{\gamma}_i$ (we call it $(x_i,y_i)$) lies on $\gamma_i \setminus \{(\tilde{x},\tilde{y})\}$,
		\item the point $(\tilde{x}, \tilde{y})$ is the only common point of the curves $\tilde{\gamma}_i\cup (x_i,y_i)+(0,\infty)v_i$ i.e.$\big[\tilde{\gamma}_i\cup \big((x_i,y_i)+(0,\infty)v_i \big)\big]\cap \big[\tilde{\gamma}_j\cup \big((x_j,y_j)+(0,\infty)v_j\big) \big] = \{(\tilde{x}, \tilde{y})\}$ for $1\leq i<j\leq 4$,
		\item the collection $\{\tilde{\gamma}_i \cup (x_i,y_i)+(0,\infty)v_i \}_{i=1}^4$ is admissible for $f$,
		\item all segments of $\tilde{\gamma}_i$ are parallel to either $(1,0)$ or $(0,1)$,
		\item $\osc(f,\tilde{\gamma}_i) \leq \tfrac{1}{4}\sigma$.
	\end{enumerate}
\end{prop}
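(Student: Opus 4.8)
The plan is to build, near the crossing point $(x_0,y_0)$, four disjoint ``spirals'' that hug the four rays $\gamma_i$ but never meet one another, so that the grid obtained by replacing the four ray-germs inside $Q((x_0,y_0),r)$ with these spirals is still admissible and carries almost none of the oscillation of $f$. Fix a small $\delta>0$, depending only on the angular separation of the unit vectors $v_1,\dots,v_4$, chosen so that the four sectors
$$
S_i=\{(x_0,y_0)+\rho w:\rho\ge0,\ |w-v_i|<3\delta\}
$$
are pairwise disjoint outside $Q((x_0,y_0),\delta r)$; this is possible precisely because the $v_i$ are four distinct unit vectors. For a given target point $(\tilde x,\tilde y)\in Q((x_0,y_0),\delta r)$ I would construct $\tilde\gamma_i$ as a piecewise horizontal-and-vertical path starting at $(\tilde x,\tilde y)$, spiralling outward while staying inside $S_i\cap Q((x_0,y_0),r)$, and terminating at a point $(x_i,y_i)\in\gamma_i\setminus\{(\tilde x,\tilde y)\}$ at radius comparable to $r$. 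Since the $S_i$ are disjoint away from $Q((x_0,y_0),\delta r)$ and the four spirals emanate from the common point $(\tilde x,\tilde y)$ inside that small square, property (2) will hold by construction once we also arrange that each $\tilde\gamma_i$ stays strictly inside $S_i$ after it leaves a small neighbourhood of $(\tilde x,\tilde y)$. Property (4) is automatic from the choice of path, and (1) is built in.

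For properties (3) and (5) I would use the smallness of the derivative encoded in \eqref{Twisted}. First choose $r_0=r_0(\sigma)$ so small that $r^{-1}|Df|(Q((x_0,y_0),r))<\eta$ for all $r\le r_0$, where $\eta$ is a small multiple of $\delta\sigma$ to be fixed. A spiral path inside $Q((x_0,y_0),r)$ can be taken to be the union of $O(1/\delta)$ horizontal and vertical segments, each of $\H^1$-length $O(\delta r)$; the key point is that by a disintegration argument à la \cite[Theorem~2.28, Theorem~3.107]{AFP} combined with Lemma~\ref{Stupido}, for each of the finitely many horizontal (resp.\ vertical) ``levels'' available in the annulus $Q((x_0,y_0),r)\setminus Q((x_0,y_0),\delta r)$, all but a $\L^1$-null set of levels give an admissible segment on which $|D_\tau f|$ is controlled; integrating, the average of $|D_\tau f|$ over admissible horizontal segments in the slab is bounded by $|Df|(Q((x_0,y_0),r))/(2\delta r)<\eta/(2\delta)$, and likewise for vertical segments. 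Choosing $\eta$ so that the total, summed over the $O(1/\delta)$ segments of the spiral, is below $\sigma/8$ gives $|D_\tau f_{\rceil\tilde\gamma_i}|(\tilde\gamma_i)<\sigma/8$; since $\osc(f,\tilde\gamma_i)\le|D_\tau f_{\rceil\tilde\gamma_i}|(\tilde\gamma_i)$ for a path on which $f$ is $BV$, this yields (5) with room to spare. The set $A_r$ is then the set of base points $(\tilde x,\tilde y)$, together with a choice of turning radii and of the levels of all the segments, such that \emph{every} segment used is admissible and satisfies the Chebyshev estimate; since each such constraint excludes only a null set of parameters, $A_r$ has positive $\L^2$-measure (strictly, positive measure in the full parameter space, which projects onto a positive-measure set of $(\tilde x,\tilde y)$, or one simply enlarges $A_r$ to live in $\R^2$ by fixing the remaining parameters). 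Admissibility of the whole collection $\{\tilde\gamma_i\cup((x_i,y_i)+(0,\infty)v_i)\}$, i.e.\ property (3), follows from Corollary~\ref{Repeat} applied on each ray $\gamma_i$ (almost every ray is admissible, and we are free to perturb the $(x_i,y_i)$) together with the admissibility of the finitely many segments of each spiral and the continuity of $f_{\rceil\Gamma}$ at the finitely many corners and at $(\tilde x,\tilde y)$, which holds for a.e.\ base point by Proposition~\ref{DebileDebileDebileDot}.

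The main obstacle is bookkeeping the disjointness (property (2)) simultaneously with all the admissibility and smallness constraints: one must choose the geometry of the four spirals (how many turns, at which radii they turn, in which sector they live) \emph{before} knowing which horizontal and vertical levels are admissible, and then show that a positive-measure set of admissible choices is still compatible with the spirals remaining disjoint and remaining inside their sectors. I expect to handle this by first fixing a combinatorial template for each spiral — a nested sequence of ``L-shaped'' turns going from radius $\delta r$ out to radius $\sim r$ inside the open sector $S_i$ — and then observing that the template has a whole open set of admissible realizations (each turning radius may be varied in an interval, each straight run may be taken at any of a positive-measure set of levels), so that removing the null sets of non-admissible levels and the null set of bad base points still leaves $\L^2(A_r)>0$. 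The disjointness is then robust because it only uses the strict sector separation, which is an open condition surviving small perturbations of all the chosen parameters.
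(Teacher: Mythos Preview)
Your averaging and admissibility strategy for properties (3) and (5) is essentially correct and matches the paper's: translate a fixed template by a parameter $u$ in a small square, integrate the one–dimensional variation over $u$ using disintegration and Lemma~\ref{Stupido}, bound the result by $C\delta^{-1}r^{-1}|Df|(Q((x_0,y_0),Cr))\to 0$, and then use a Chebyshev argument to extract a positive–measure set of good parameters; admissibility for almost every parameter follows from Proposition~\ref{DebileDebileDebileDot}. That part needs only cosmetic cleaning.

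The genuine gap is the geometry underlying properties (2) and (4). Your description ``spiralling outward while staying inside $S_i$'' is internally inconsistent: a path that winds around the centre cannot remain in a narrow angular sector, and conversely a zigzag confined to a sector is not a spiral. More importantly, confining each $\tilde\gamma_i$ to its own sector does \emph{not} settle disjointness. All four curves share the endpoint $(\tilde x,\tilde y)\in Q((x_0,y_0),\delta r)$, and inside that small square the sectors meet; you give no mechanism that prevents the four horizontal/vertical polylines from crossing there. Worse, property (2) demands disjointness of the \emph{extended} curves $\tilde\gamma_i\cup\big((x_i,y_i)+(0,\infty)v_i\big)$: if two rays $\gamma_i,\gamma_j$ lie in the same angular region (nothing forbids three or four of the $v_i$ from being close together), then $\tilde\gamma_j$ must reach $\gamma_j$ without crossing the continuation of $\gamma_i$ beyond $(x_i,y_i)$, and your sector picture says nothing about this.

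The paper resolves this with a device you do not have: it fixes one explicit axis-parallel square spiral $\gamma_1^*$ with a bounded number of segments, takes $\gamma_2^*,\gamma_3^*,\gamma_4^*$ to be its $90^\circ,180^\circ,270^\circ$ rotations (so the four are interlocking, winding around each other without ever meeting except at the common centre), then rescales and translates the whole pattern by $r$ and $u$. Disjointness of the four spirals is automatic by rotational symmetry. The four rays $\gamma_i$ are sorted into the four quadrants (possibly several in one quadrant), and a short case analysis determines a starting quadrant; each spiral is then \emph{truncated} at its first intersection with a ray not yet claimed by a shorter spiral. Because the spirals expand monotonically and are nested rotationally, this truncation rule guarantees that each $\tilde\gamma_j$ crosses any foreign ray $\gamma_i$ only \emph{before} the point $(x_i,y_i)$, which is exactly what (2) requires. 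Until you supply a comparable combinatorial device, the proof of (2) is missing.
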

\begin{proof}
	We separate the plane into 4 quadrants $P_1 = \{(x,y) \in \er^2; x>x_0, y \geq y_0 \}, P_2 = \{(x,y) \in \er^2; x\leq x_0, y > y_0 \},P_3 = \{(x,y) \in \er^2; x<x_0, y \leq y_0 \}$ and $P_4 = \{(x,y) \in \er^2; x\geq x_0, y < y_0 \}$. Notice that each $\gamma_i$ is contained in exactly one of the quadrants. Further observe that there exists a $0<\delta<\tfrac{1}{8}$ such that $(\gamma_i - u)\setminus Q((x_0,y_0),\tfrac{1}{2})$ is contained in exactly one quadrant for any $|u|<\delta$ (in the case of $v_i=(\pm 1,0)$ or $(0,\pm1)$ this may not be the same as the original quadrant). In every case having chosen $|u|<\delta$ we have a uniquely determined quadrant $P_i + u$ containing $\gamma_i$.
	
	We define the polyline `spiral'
	$$
	\begin{aligned}
	\gamma_{1}^*=
	& [(0,0),(1,0)] \cup [(1,0), (1,2)] \cup [(1,2), (-3,2)] \cup [(-3,2),(-3,-4)]\\
	& \cup [(-3,-4)(5,-4)] \cup [(5,-4),(5,6)] \cup  [(5,6), (-7, 6)]
	\end{aligned}
	$$
	and 
	$\gamma_{2}^*, \gamma_{3}^*, \gamma_{4}^*$ are rotations of $\gamma_{1}^*$ by $90, 180$ and $270$ degrees clockwise. See Figure~\ref{Fig:Spiral} for an illustration. We use these polylines as a starting point. In the following we construct $\tilde{\gamma}_i$ from $\gamma_{i}^*$ by translating, rescaling and truncating them.
	
	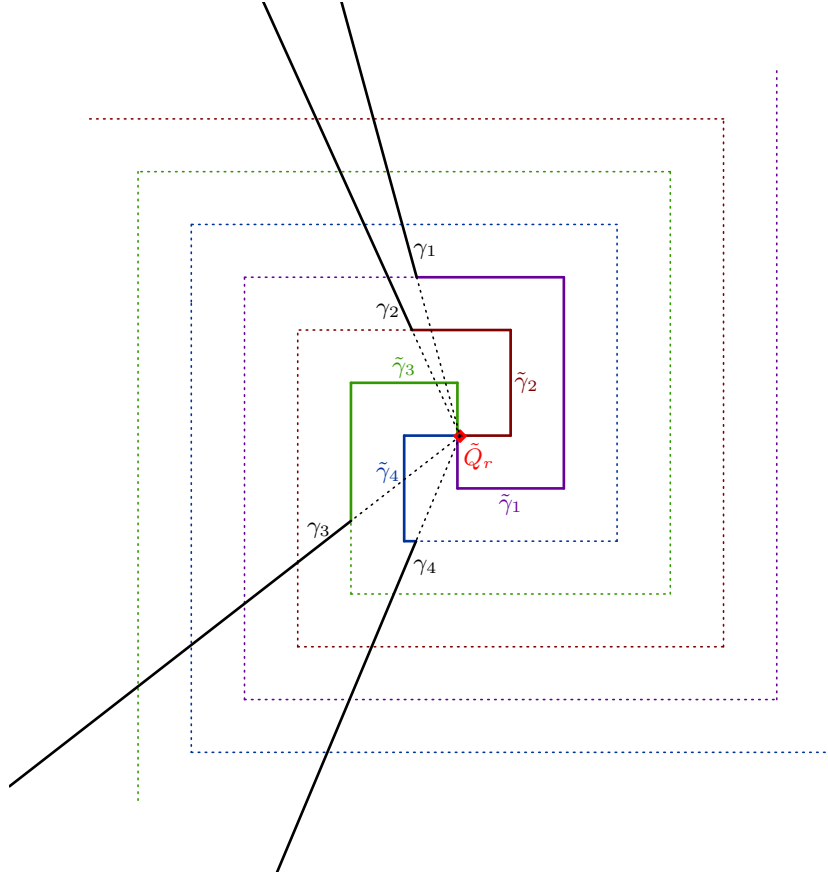
\begin{figure}
		\begin{tikzpicture}[line cap=round,line join=round,>=triangle 45,x=0.7cm,y=0.7cm]
		\clip(-8.415073427508649,-8.24827821812365) rectangle (8.487257935467955,8.200575961943699);
		\fill[line width=1.0pt,color=ffqqqq,fill=ffqqqq,fill opacity=1.0] (0.05,0.09) -- (-0.05,-0.01) -- (0.05,-0.11) -- (0.15,-0.01) -- cycle;
		\draw [line width=1.0pt,color=yqqqqq] (0.,0.)-- (1.,0.);
		\draw [line width=1.0pt,color=yqqqqq] (1.,0.)-- (1.,2.);
		\draw [line width=0.6pt,dotted,color=yqqqqq] (-3.,-4.)-- (5.,-4.);
		\draw [line width=0.6pt,dotted,color=yqqqqq] (5.,-4.)-- (5.,6.);
		\draw [line width=1.0pt,color=ttzzqq] (0.,0.)-- (0.,1.);
		\draw [line width=1.0pt,color=ttzzqq] (0.,1.)-- (-2.,1.);
		\draw [line width=0.6pt,dotted,color=ttzzqq] (-2.,-3.)-- (4.,-3.);
		\draw [line width=0.6pt,dotted,color=ttzzqq] (4.,-3.)-- (4.,5.);
		\draw [line width=1.0pt,color=qqttzz] (0.,0.)-- (-1.,0.);
		\draw [line width=1.0pt,color=qqttzz] (-1.,0.)-- (-1.,-2.);
		\draw [line width=0.6pt,dotted,color=qqttzz] (3.,-2.)-- (3.,4.);
		\draw [line width=0.6pt,dotted,color=qqttzz] (3.,4.)-- (-5.,4.);
		\draw [line width=1.0pt,color=wwqqzz] (0.,0.)-- (0.,-1.);
		\draw [line width=1.0pt,color=wwqqzz] (0.,-1.)-- (2.,-1.);
		\draw [line width=1.0pt,color=wwqqzz] (2.,-1.)-- (2.,3.);
		\draw [line width=0.6pt,dotted,color=wwqqzz] (-4.,-5.)-- (6.,-5.);
		\draw [line width=1.0pt,color=yqqqqq] (1.,2.)-- (-0.8530289115897572,2.);
		\draw [line width=0.6pt,dotted,color=yqqqqq] (-0.8530289115897572,2.)-- (-3.,2.);
		\draw [line width=0.6pt,dotted,color=yqqqqq] (-3.,-4.)-- (-3.,-2.4);
		\draw [line width=0.6pt,dotted,color=yqqqqq] (-3.,-2.4)-- (-3.,2.);
		\draw [line width=0.6pt,dotted,color=qqttzz] (-5.,4.)-- (-5.,2.5);
		\draw [line width=0.6pt,dotted,color=qqttzz] (-5.,2.5)-- (-5.,-6.);
		\draw [line width=0.6pt,dotted,color=ttzzqq] (-6.,5.)-- (-6.,-7.);
		\draw [line width=0.6pt,dotted,color=ttzzqq] (4.,5.)-- (-2.132572278974393,5.);
		\draw [line width=0.6pt,dotted,color=ttzzqq] (-2.132572278974393,5.)-- (-6.,5.);
		\draw [line width=0.6pt,dotted,color=qqttzz] (-5.,-6.)-- (7.,-6.);
		\draw [line width=0.6pt,dotted,color=wwqqzz] (6.,-5.)-- (6.,7.);
		\draw [line width=0.6pt,dotted,color=yqqqqq] (5.,6.)-- (-7.,6.);
		\draw [line width=1.0pt,color=ffqqqq] (0.05,0.09)-- (-0.05,-0.01);
		\draw [line width=1.0pt,color=ffqqqq] (-0.05,-0.01)-- (0.05,-0.11);
		\draw [line width=1.0pt,color=ffqqqq] (0.05,-0.11)-- (0.15,-0.01);
		\draw [line width=1.0pt,color=ffqqqq] (0.15,-0.01)-- (0.05,0.09);
		\draw [line width=0.6pt,dotted,color=wwqqzz] (-4.,-5.)-- (-4.,3.);
		\draw [line width=0.6pt,dotted,color=wwqqzz] (-4.,3.)-- (-0.7650880926885759,3.);
		\draw [line width=1.0pt,color=wwqqzz] (2.,3.)-- (-0.7650880926885759,3.);
		\draw [line width=1.0pt,color=ttzzqq] (-2.,1.)-- (-2.,-1.6163934426229511);
		\draw [line width=0.6pt,dotted,color=ttzzqq] (-2.,-3.)-- (-2.,-1.6163934426229511);
		\draw [line width=1.0pt,color=qqttzz] (-1.,-2.)-- (-0.7782615538627972,-2.);
		\draw [line width=0.6pt,dotted,color=qqttzz] (3.,-2.)-- (-0.7782615538627972,-2.);
		\draw [line width=0.6pt,dotted] (0.05,-0.01)-- (-2.,-1.6163934426229511);
		\draw [line width=1.0pt,domain=-8.415073427508649:-2.0] plot(\x,{(--0.049180327868853624-0.7836065573770488*\x)/-1.});
		\draw [line width=0.6pt,dotted] (0.05,-0.01)-- (-0.7782615538627972,-2.);
		\draw [line width=1.0pt,domain=-8.415073427508649:-0.7782615538627972] plot(\x,{(--0.05416211836111984-1.*\x)/-0.41621183611195856});
		\draw [line width=0.6pt,dotted] (0.05,-0.01)-- (-0.8530289115897572,2.);
		\draw [line width=1.0pt,domain=-8.415073427508649:-0.8530289115897572] plot(\x,{(-0.18202927539124936--4.*\x)/-1.797072460875139});
		\draw [line width=0.6pt,dotted] (-0.7650880926885759,3.)-- (0.05,-0.01);
		\draw [line width=1.0pt,domain=-8.415073427508649:-0.7650880926885759] plot(\x,{(-0.14187619841174293--3.*\x)/-0.8123801588258235});
		\begin{scriptsize}
		\draw[color=yqqqqq] (1.3,1) node {$\tilde{\gamma}_2$};
		\draw[color=ttzzqq] (-1,1.3) node {$\tilde{\gamma}_3$};
		\draw[color=qqttzz] (-1.3,-0.7) node {$\tilde{\gamma}_4$};
		\draw[color=wwqqzz] (1.0,-1.3) node {$\tilde{\gamma}_1$};
		\draw[color=black] (-2.6,-1.8) node {$\gamma_3$};
		\draw[color=black] (-0.6,-2.5) node {$\gamma_4$};
		\draw[color=black] (-1.3,2.35) node {$\gamma_2$};
		\draw[color=black] (-0.6,3.5) node {$\gamma_1$};
		\draw[color=red] (0.4,-0.4) node {$\tilde{Q}_r$};
		\end{scriptsize}
		\end{tikzpicture}
		\caption{The approach to replacing four segments with a common endpoint by four polylines ``spirals'' parallel to coordinate axes. Here the paths have already been renumbered so that the first quadrant is the successor of the second of the two quadrants containing two segments which we have also renumbered.}\label{Fig:Spiral}
	\end{figure}
	
	We call
	$$
	\gamma_{i,u,r}^* = (x_0,y_0) + u + r  \gamma_i^*  \quad \text{ and } \quad \Gamma(u,r) =\bigcup_{i=1}^4 \gamma_{i,u,r}^*
	$$
	for each $u \in \{(x,y) \in \er^2; |x|+ |y|\leq \delta r \} =:\tilde{Q}_r$ and $r>0$. Then $\Gamma(u,r) \subset Q((x_0,y_0), 8r)$ for each $u\in \tilde{Q}_r$. For every $u\in \tilde{Q}_r$ we decompose $u = s(\tfrac{1}{\sqrt{2}}, \tfrac{1}{\sqrt{2}}) + t(\tfrac{1}{\sqrt{2}}, -\tfrac{1}{\sqrt{2}})$ for  $s,t \in [-\tfrac{1}{\sqrt{2}}\delta r,\tfrac{1}{\sqrt{2}}\delta r]$. There exists a number $N$ such that for any fixed $s$ there is at most $N$ points of the set $\Gamma(0,r)$ on the corresponding line $s(\tfrac{1}{\sqrt{2}}, \tfrac{1}{\sqrt{2}}) + \er (\tfrac{1}{\sqrt{2}}, -\tfrac{1}{\sqrt{2}})$.

	As explained above, by $|D_{\tau}f_{\rceil \Gamma(u,r)}|$ we denote the measure on $\Gamma(u,r)$ given by the one-dimensional variation of $f$ on the segments of $\Gamma(u,r)$. Let us calculate
	\begin{equation}\label{Radiation}
	\begin{aligned}
	&\int_{\tilde{Q}_r} |D_{\tau}f_{\rceil \Gamma(u,r)}|(\Gamma(u,r)) \, d\L^2(u) \\
	&= \int_{\tfrac{-\delta r}{\sqrt{2}}}^{\tfrac{\delta r}{\sqrt{2}}}\int_{\tfrac{-\delta r}{\sqrt{2}}}^{\tfrac{\delta r}{\sqrt{2}}} |D_{\tau}f_{\rceil \Gamma(s(\tfrac{1}{\sqrt{2}}, \tfrac{1}{\sqrt{2}}) + t(\tfrac{1}{\sqrt{2}}, -\tfrac{1}{\sqrt{2}}),r)}|(\Gamma(s(\tfrac{1}{\sqrt{2}}, \tfrac{1}{\sqrt{2}}) + t(\tfrac{1}{\sqrt{2}}, -\tfrac{1}{\sqrt{2}}),r)) \, dt \, ds \\
	&\leq  \int_{\tfrac{-\delta r}{\sqrt{2}}}^{\tfrac{\delta r}{\sqrt{2}}} N\Big(|\langle Df,(0,1) \rangle|(Q((x_0,y_0),2r)) + |\langle Df,(1,0) \rangle|(Q((x_0,y_0),2r))\Big) \, ds\\
	&\leq CN\delta r|Df|\big(Q(x_0,y_0),8r\big).
	\end{aligned}
	\end{equation}
	Using \eqref{Twisted} we get that
	$$
	\oint_{\tilde{Q}_r} |Df_{|\Gamma(u,r)}|(\Gamma(u,r)) \, d\L^2(u)  \leq C\delta^{-1}r^{-1}|Df|\big(Q(x_0,y_0), 7r\big) \to 0
	$$
	as $r\to 0$. Using the standard average values argument (as in the proof of \eqref{Skiing}) we find an $r_0 >0$ such that for any $0<r<r_0$ there exists an $\tilde{A}_r \subset \tilde{Q}_r$ of positive $\L^2$ measure such that
	\begin{equation}\label{LivingBoy}
	|D_{\tau}f_{\rceil\Gamma(u,r)}|(\Gamma(u,r)) \leq\tfrac{1}{4} \sigma \text{ for any } u \in \tilde{A}_r.
	\end{equation}
	We may assume that all points of $\tilde{A}_r$ are Lebesgue points for $f$.
	
	Now we truncate the curves $\tilde{\gamma}_{i,u,r}^*$ for $u\in \tilde{A}_r$ so that they satisfy (1) and (2). The set $\Gamma(u, r)$ is the union of four polylines, $\gamma_{i,u, r}^*$, each of which is a `square spiral'-type curve anti-clockwise around $(x_0,y_0)+u$. Recall that for each $u \in A_r$ and for each $\gamma_i$ we have a uniquely determined quadrant $P_j$ such that $\gamma_i \setminus Q((x_0,y_0),\tfrac{1}{2}r) \subset P_j + u$. We call $P_{j+1}$ the successor of $P_j$ and $P_{j-1}$ its predecessor calculating the indexes $\mod 4$.
	
	Assume that we have $u\in A_r$ fixed. We start working with the quadrant whose predecessor contains the greatest number of segments $\gamma_i$. In the case where each quadrant contains exactly one segment we may start with any quadrant. In the case when there are two quadrants each containing two segments but neither is the successor of the other we start with the successor of either quadrant containing segments. If there are two successive quadrants both containing two segments each, we start with the quadrant succeeding the second of these two quadrants. Thus we have determined a starting quadrant (and without loss of generality) we assume it is  $P_1$. 
	
	We define $\tilde{\gamma}_1$ as the part of $\gamma_{1,u, r}^*$ which goes from $(\tilde{x}, \tilde{y}) = (x_0,y_0) + u$ to its first intersection with the first segment it meets, (if necessary re-number it to be) $\gamma_1$. Then we define each curve $\tilde{\gamma}_i$ similarly; $\tilde{\gamma}_i$ is the part of $\gamma_{i,u, r}^*$ which goes from $(\tilde{x}, \tilde{y}) = (x_0,y_0) + u$ to its first intersection with the segment (assumed after re-numbering to be) $\gamma_i$ i.e. the first segment which has not yet been taken by a previous curve. Since there is a finite number of scenarios it is not difficult to check that the algorithm ensures points (1) and (2) hold by checking on a case by case basis. An illustration of this process is in Figure~\ref{Fig:Spiral}.
	
	Point (4) is obvious. Point (5) is an obvious result of \eqref{LivingBoy}. Then, by Proposition~\ref{DebileDebileDebileDot} $\{\tilde{\gamma}_i\cup (x_i,y_i)+(0,\infty)v_i\}$ is admissible for $f$ for almost every $u\in \tilde{A}_r$, which is point (3) where $A_r = (x_0,y_0)+\tilde{A}_r$.	
\end{proof}

\begin{thm}\label{ItsAllLies}
	Let $f\in BV(Q(0,1), \er^2)$ and $f = \id$ on $\partial Q(0,1)$. It holds that $f$ is an $NCBV^+$ map if and only if it is an $NCBV$ map.
\end{thm}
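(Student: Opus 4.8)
One direction is immediate from Proposition~\ref{Obvious}: an $NCBV^+$ map is automatically $NCBV$, since a good straight grid is in particular a good non-straight grid. So the whole content is the implication $NCBV\Rightarrow NCBV^+$, which is precisely the implication (1)$\Rightarrow$(2) of Theorem~\ref{main}. The plan is to fix an $NCBV$ map $f$, a parameter $\sigma>0$, and a good non-straight grid $\Gamma=\bigcup_{i=1}^K\gamma_i([0,1])$ with geometric representative $h$, then replace $\Gamma$ by a nearby family of horizontal and vertical segments, invoke the $NCBV$ hypothesis on the straight grid they generate, and transport the resulting injective map back to $\Gamma$.

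First I would split the ``bad'' points of $\Gamma$ into three finite lists: the crossings $X_{i,j}$ (finitely many, each with $\lim_{r\to0}r^{-1}|Df|(Q(X_{i,j},r))=0$ by admissibility); the ``large'' jumps $X\in\Gamma\cap J_f$, say those with $|f^+(X)-f^-(X)|>\sigma/40$, of which there are finitely many since $|D^jf|(Q(0,1))<\infty$ and which, the grid being good, are Lebesgue points of $(f^+-f^-)\otimes v$ in the sense of \eqref{StealAndBorrow} lying in the interior of a single segment of $\Gamma$; and every remaining point $X$, at which $\liminf_{r\to0}r^{-1}|Df|(Q(X,r))\le\sigma/34$ holds (either because the full density of $Df$ vanishes there, or because $X$ is a sufficiently small jump, using the density formula of Theorem~\ref{FV}), so that the hypothesis \eqref{Lock} of Lemma~\ref{NoName} is met. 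Then: near each crossing $X_{i,j}$ I apply Proposition~\ref{Crossroads} with $v_1,\dots,v_4$ the tangent directions of the segments of $\Gamma$ leaving $X_{i,j}$, replacing a tiny neighbourhood's worth of $\Gamma$ by four axis-parallel spirals $\tilde\gamma_i$ (Figure~\ref{Fig:Spiral}) on which $\osc(f,\tilde\gamma_i)\le\sigma/4$, meeting only at their common endpoint, whose union with the truncated rays is admissible for $f$. Near each large jump $X$ I use the Lebesgue-point property to replace a short arc of $\Gamma$ through $X$ by a small piecewise horizontal-and-vertical staircase with the same endpoints on $\Gamma$, crossing a nearby parallel piece of $J_f$ so that it reproduces essentially the same jump segment $[f^-(X)f^+(X)]$ with controlled extra length. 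Finally, the compact remainder of $\Gamma$ is covered by finitely many rectangles $R_{X_m}$ furnished by Lemma~\ref{NoName}, with $|D_\tau f_{\rceil R_{X_m}}|(R_{X_m})<\sigma/4$, generated straight grid good for $f$, and hence $\osc(f,\cdot)<\sigma/4$ on $\Gamma\cap R_{X_m}$ and on an appropriate arc of $\partial R_{X_m}$; all radii are taken small enough to keep these neighbourhoods pairwise disjoint.

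Next I would assemble the spirals, the staircases and the selected arcs of the $\partial R_{X_m}$ into one finite admissible family $S$ of horizontal and vertical segments, and let $\tilde\Gamma$ be the good straight grid it generates (goodness is arranged by the freedom to choose the $R_{X_m}$ off the null sets $N_1,N_2$ and off the finitely many exceptional directions, exactly as in Lemma~\ref{NoName}). Applying the $NCBV$ hypothesis to $\tilde\Gamma$ with error $\sigma'<\sigma/4$ produces a continuous injective $\tilde H\colon\tilde\Gamma\to\er^2$ with $|\tilde h-\tilde H|<\sigma'$, where $\tilde h$ is the geometric representative of $f$ on $\tilde\Gamma$. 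It then remains to build a homeomorphism $\Psi\colon\Gamma\to\Gamma'$ onto the sub-grid $\Gamma'\subset\tilde\Gamma$ made of the segments actually used, matching each local piece of $\Gamma$ with its replacement. Using $\osc(f,\cdot)<\sigma/4$ on each matched piece outside the large jumps, the fact that on the large-jump pieces both $h$ and $\tilde h$ essentially trace $[f^-(X)f^+(X)]$, and the geometric comparison estimates (Lemma~\ref{Repetition} together with Proposition~\ref{EverythingIDo}) to control how the spirals and staircases distort the variation-weighted arc length parametrising $h$, one checks $|h(x)-\tilde h(\Psi(x))|<\sigma/2$ on $\Gamma$. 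Then $H_\sigma:=\tilde H\circ\Psi$ is continuous and injective, and
\[
|h(x)-H_\sigma(x)|\le|h(x)-\tilde h(\Psi(x))|+|\tilde h(\Psi(x))-\tilde H(\Psi(x))|<\tfrac{\sigma}{2}+\sigma'<\sigma ,
\]
which is exactly the $NCBV^+$ estimate.

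The main obstacle is the bookkeeping in the last two steps: one must make the three kinds of local surgery fit together into a single admissible family of horizontal and vertical segments whose generated straight grid is genuinely good for $f$, and then produce $\Psi$ so that $h$ and $\tilde h\circ\Psi$ remain $\sigma/2$-close. Because the geometric representative is a variation-weighted constant-speed parametrisation, it is sensitive not only to $\osc f$ but to the total variation of $f$ on each replaced piece, which is why Proposition~\ref{Crossroads} and Lemma~\ref{NoName} are stated with variation bound $\sigma/4$ and why Proposition~\ref{EverythingIDo} and Lemma~\ref{Repetition} are needed. Handling the finitely many large jumps — replacing an arc of $\Gamma$ through a jump point by an axis-parallel staircase that reproduces the jump yet still glues to the neighbouring generic pieces — is the most intricate single point.
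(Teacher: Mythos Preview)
Your proposal is correct and follows essentially the same route as the paper: handle the crossings $X_{i,j}$ with the spirals of Proposition~\ref{Crossroads}, the finitely many large jumps with short axis-parallel polylines reproducing the jump (the paper's $P_{(x_0,y_0)}$, justified via \eqref{Close1}--\eqref{Close2}), the compact remainder with the rectangles of Lemma~\ref{NoName}, then generate a good straight grid, apply $NCBV$, and transport back via a piecewise constant-speed bijection between $\Gamma$ and a subset $\hat\Gamma$ of the straight grid. The paper builds this bijection in the direction $g:\hat\Gamma\to\Gamma$ and pushes $H_\sigma$ forward by $g^{-1}$, whereas you write it as $\Psi:\Gamma\to\Gamma'$ and pull back; these are the same argument. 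One small correction: you invoke Proposition~\ref{EverythingIDo}, but that estimate on generalized segments plays no role here --- the final comparison uses only the oscillation bounds from Lemma~\ref{NoName} and Proposition~\ref{Crossroads} on the generic pieces, and Lemma~\ref{Repetition} alone on the large-jump pieces.
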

\begin{proof}
	Thanks to Proposition~\ref{Obvious} it suffices to show that if $f$ satisfies $NCBV$ then $f$ satisfies $NCBV^+$. Therefore we take a non-straight grid $\Gamma$ good for $f$ and, for any any fixed $\sigma$, we construct $\tilde{\Gamma}$ a straight grid good for $f$ and an injective continuous map $g$ from $\hat{\Gamma} \subset \tilde{\Gamma}$ onto $\Gamma$ with the property that given any injective approximation $h$ of $f$ on $\tilde{\Gamma}$, with $\|h - f\|_{\infty, \tilde{\Gamma}}<\sigma/4$ it holds that $\|h\circ g^{-1}-f\|_{\infty, \Gamma} < \sigma$.

	\begin{figure}	
		\begin{tikzpicture}[line cap=round,line join=round,>=triangle 45,x=1.0cm,y=1.0cm]
		\clip(-8,-4.2) rectangle (8,4.3);
		\fill[line width=1.pt,color=ttttff,fill=ttttff,fill opacity=0.10000000149011612] (-2.82842712474619,2.82842712474619) -- (-2.82842712474619,-2.82842712474619) -- (2.82842712474619,-2.82842712474619) -- (2.82842712474619,2.82842712474619) -- cycle;
		\draw [line width=1.pt] (0.,0.) circle (4.cm);
		\draw [line width=0.5pt,dash pattern=on 1pt off 1pt,color=ffqqqq,domain=-4.438870273717386:5.161783365469267] plot(\x,{(-0.--2.2924995422892533*\x)/3.2778721525714762});
		\draw [line width=0.5pt,domain=-4.438870273717386:5.161783365469267] plot(\x,{(-0.--1.5060397079474188*\x)/3.7056503340285163});
		\draw [line width=1.pt] (-1.8631606742593614,-0.357901749239813)-- (2.01110114697385,-0.357901749239813);
		\draw [line width=1.pt] (2.01110114697385,-0.357901749239813)-- (2.0096752396517337,0.8167664075589935);
		\draw [line width=1.pt] (-1.8631606742593614,-0.357901749239813)-- (-1.8625801834827382,-0.7569844595972629);
		\draw [line width=1.pt,color=ttttff] (-2.82842712474619,2.82842712474619)-- (-2.82842712474619,-2.82842712474619);
		\draw [line width=1.pt,color=ttttff] (-2.82842712474619,-2.82842712474619)-- (2.82842712474619,-2.82842712474619);
		\draw [line width=1.pt,color=ttttff] (2.82842712474619,-2.82842712474619)-- (2.82842712474619,2.82842712474619);
		\draw [line width=1.pt,color=ttttff] (2.82842712474619,2.82842712474619)-- (-2.82842712474619,2.82842712474619);
		\draw [color=ffqqqq](3.7,2.9) node[anchor=north west] {$v(x_0,y_0)^{\bot}$};
		\draw (3.9,1.7) node[anchor=north west] {$[(x_1,y_1)(x_2,y_2)]$};
		\draw (0.5,-0.3) node[anchor=north west] {$P_{(x_0,y_0)}$};
		\draw (-1.3,0.7) node[anchor=north west] {$(x_0,y_0)$};
		\begin{scriptsize}
		\draw [fill=uuuuuu] (0.,0.) circle (2 pt);
		\draw [fill=ttttff] (2.0096752396517337,0.8167664075589935) circle (1pt);
		\draw [fill=ttttff] (-1.8625801834827382,-0.7569844595972629) circle (1pt);
		\end{scriptsize}
		\end{tikzpicture}
		\caption{The situation close to the points of $J_f \cap \Gamma$. The jump set `approaches' the approximate tangent space. Because $(x_0,y_0)$ is a `Lebesgue' jump point there are many horizontal segments with jumps very close to the jump at $(x_0,y_0)$. Such a horizontal segment can easily be connected to $[(x_1,y_1)(x_2,y_2)]$ by segments parallel to coordinate axes.}\label{Fig:JumpPoints}
	\end{figure}
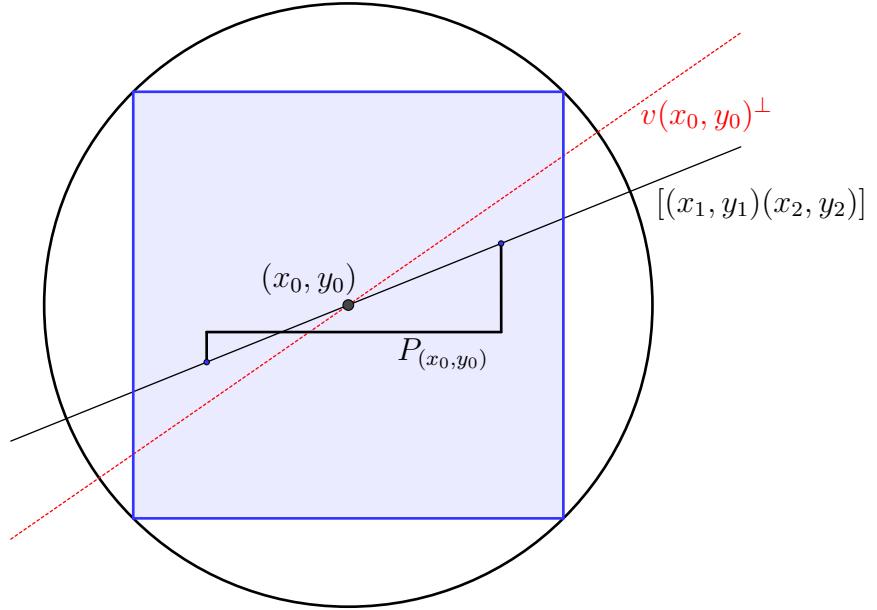

	We call $F_{\sigma}$ the set of $(x_0,y_0) \in \Gamma \cap J_f$ Lebesgue points for $\big(f^+(\cdot)-f^+(\cdot)\big)\otimes v(\cdot)$ in the sense of \eqref{StealAndBorrow} such that $\big|f^+(\cdot)-f^+(\cdot)\big|\geq \tfrac{1}{40}\sigma$. Then $F_{\sigma}$ is a finite set and for each $(x_0,y_0)\in F_{\sigma}$ there exists exactly one segment $[(x_1,y_1)(x_2,y_2)]$ of $\Gamma$ containing $(x_0,y_0)$. Without loss of generality we assume that $[(x_1,y_1)(x_2,y_2)] \cap F_{\sigma} = \{(x_0,y_0)\}$ and that $\big | \big \langle(1,0), v(x_0,y_0) \big \rangle \big |\geq \tfrac{1}{2}$.

	By the Federer-Vol'pert theorem, Theorem~\ref{FV}  (and Theorem~\ref{StrictForTanMeas}) we have the strict convergence of $r^{-1} |Df|\big(r [\cdot + (x_0,y_0)]\big)$ to $|f^+(x_0,y_0) - f^-(x_0,y_0)| \H^1_{\rceil v(x_0, y_0)^{\bot}}$ on $B(0,1)$ as $r\to 0$ for each point of $(x_0,y_0) \in \Gamma \cap F_{\sigma}$. Therefore we get
\begin{equation}\label{to be used for Close2}
r^{-1} |\langle Df, (1,0)\rangle|\big((x_0-r,x_0+r)\times(y_0-\tfrac{1}{2}r,y_0+\tfrac{1}{2}r)\big) \to |f^+(x_0,y_0) - f^-(x_0,y_0)|.
\end{equation}	
	Further we have that  $[(x_1,y_1)(x_2,y_2)]$ is not parallel to the tangent space $v(x_0, y_0)^{\bot}$. Therefore we observe the existence of a polyline $P_{(x_0,y_0)} \subset Q((x_0,y_0), r)$ with segments parallel to coordinate axes with one endpoint $(\tilde{x}_{(x_0,y_0)},\tilde{y}_{(x_0,y_0)}) \in [(x_1,y_1)(x_0,y_0)]\setminus \{(x_0,y_0)\}$ and the other on $(\hat{x}_{(x_0,y_0)},\hat{y}_{(x_0,y_0)}) \in[(x_0,y_0)(x_2,y_2)]\setminus \{(x_0,y_0)\}$ such that the straight grid generated by $P_{(x_0,y_0)}$ (call it $\tilde{\Gamma}_1$) is a good straight grid for $f$ and
	\begin{equation}\label{Close1}
		\Big| | D_{\tau}f_{\rceil \tilde{\Gamma}_1}|(P_{(x_0,y_0)}) - |f^+(x_0,y_0) - f^-(x_0,y_0)| \Big| \leq \big(\frac{\sigma}{100}\big)^2.
	\end{equation}
 	To ease our notation we denote $S_{(x_0,y_0), \sigma} = [(\tilde{x}_{(x_0,y_0)},\tilde{y}_{(x_0,y_0)}) (\hat{x}_{(x_0,y_0)},\hat{y}_{(x_0,y_0)})]$. Further, thanks to \eqref{to be used for Close2}, we assume that $r>0$ is chosen so small that
	\begin{equation}\label{Close2}
		| D_{\tau}f_{\rceil \Gamma}|\big( S_{(x_0,y_0), \sigma}\big) \leq |f^+(x_0,y_0) - f^-(x_0,y_0)| + \big(\frac{\sigma}{100}\big)^2.
	\end{equation}

	For a depiction of the process see Figure~\ref{Fig:JumpPoints}. Since we are in position to choose the $x$-coordinate of vertical segments and the $y$-coordinate for horizontal segments for the polylines $P_{(x_0,y_0)}$ from sets of positive measure. Therefore we may assume that the horizontal segments do not have $y$-coordinate in the set $N_1$ from Corollary~\ref{Repeat} and the vertical segments do not have $x$-coordinate in the set $N_2$ from Corollary~\ref{Repeat} and that the straight grid generated by $\bigcup_{(x_0,y_0)\in \Gamma\cap F_{\sigma}} P_{(x_0,y_0)}$ is good for $f$. Further we may assume that we choose $r$ so small that the squares $Q\big((x_0,y_0),r\big)$ for $(x_0, y_0) \in F_{\sigma}\cap \Gamma$ are pairwise disjoint and do not contain any of the intersection points $X_{i,j}$ of $\Gamma$. 
	
	Let $r>0$ be chosen so small that $Q(X_{i,j}, r) \cap \Gamma$ is exactly the union of four segments, $Q(X_{i,j}, r)$ are pairwise disjoint and are also disjoint with each $Q\big((x_0,y_0),r\big)$ for each $(x_0, y_0) \in F_{\sigma}\cap \Gamma$. For each point $X_{i,j}$ intersection point of $\gamma_i([0,1]) \neq \gamma_j([0,1]) \subset \Gamma$ we find polylines (contained in $Q(X_{i,j}, r)$) from Proposition~\ref{Crossroads}, where the four segments from the claim of Proposition~\ref{Crossroads} (there called $\gamma_1, \dots, \gamma_4$) refer to the four segments of $\Gamma$, which intersect at $X_{i,j}$.  We will refer to these polylines as $\tilde{\gamma}^{X_{i,j}}_k$, $k=1,2,3,4$. By the fact that $A_r$ has positive measure we can garantee that the straight grid generated by $\{\tilde{\gamma}^{X_{i,j}}_k: i,j,k \}$ and by the polylines $\{P_{(x_0,y_0)}: (x_0,y_0)\in F_{\sigma} \}$ is a good straight grid for $f$. We call this grid $\tilde{\Gamma}_2$.
	
	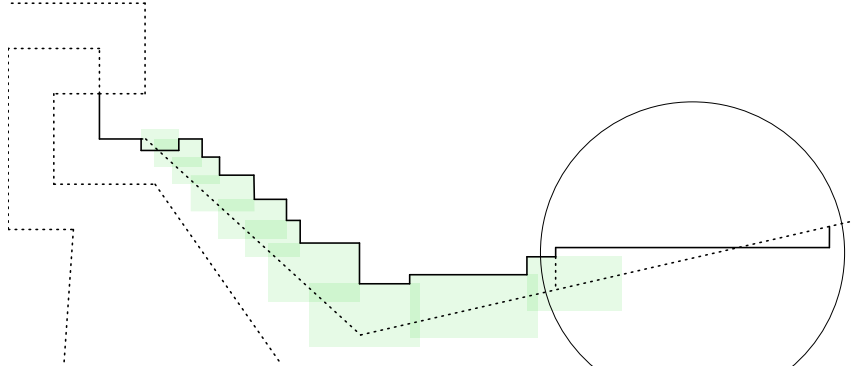
\begin{figure}
		\begin{tikzpicture}[line cap=round,line join=round,>=triangle 45,x=0.6 cm,y=0.6 cm]
		\clip(-2,-6) rectangle (16.5,4);
		\fill[line width=1.pt,color=qqccqq,fill=qqccqq,fill opacity=0.10000000149011612] (0.9134081329050975,-0.7768586616667901) -- (0.9134081329050975,-1.2538268537651163) -- (1.7395412752136707,-1.2538268537651163) -- (1.7395412752136707,-0.7768586616667901) -- cycle;
		\fill[line width=1.pt,color=qqccqq,fill=qqccqq,fill opacity=0.10000000149011612] (1.2,-1.) -- (1.2,-1.6069579560255214) -- (2.251282017894359,-1.6069579560255214) -- (2.251282017894359,-1.) -- cycle;
		\fill[line width=1.pt,color=qqccqq,fill=qqccqq,fill opacity=0.10000000149011612] (1.6,-1.4) -- (1.6,-1.9961001309902808) -- (2.63247571327363,-1.9961001309902806) -- (2.63247571327363,-1.4) -- cycle;
		\fill[line width=1.pt,color=qqccqq,fill=qqccqq,fill opacity=0.10000000149011612] (2.,-2.6) -- (2.,-1.8) -- (3.385640646055102,-1.8) -- (3.4,-2.6) -- cycle;
		\fill[line width=1.pt,color=qqccqq,fill=qqccqq,fill opacity=0.10000000149011612] (2.6,-3.2) -- (2.6,-2.3340145526058156) -- (4.099930793501993,-2.3340145526058156) -- (4.099930793501993,-3.2) -- cycle;
		\fill[line width=1.pt,color=qqccqq,fill=qqccqq,fill opacity=0.10000000149011612] (3.2,-3.6) -- (3.2,-2.8) -- (4.4,-2.8) -- (4.4,-3.6) -- cycle;
		\fill[line width=1.pt,color=qqccqq,fill=qqccqq,fill opacity=0.10000000149011612] (3.7,-4.6) -- (3.7,-3.3) -- (5.7,-3.3) -- (5.7,-4.6) -- cycle;
		\fill[line width=1.pt,color=qqccqq,fill=qqccqq,fill opacity=0.10000000149011612] (4.6,-5.6) -- (4.6,-4.2) -- (7.024871130596427,-4.2) -- (7.024871130596427,-5.6) -- cycle;
		\fill[line width=1.pt,color=qqccqq,fill=qqccqq,fill opacity=0.10000000149011612] (6.8,-4.) -- (6.8,-5.4) -- (9.6,-5.4) -- (9.6,-4.) -- cycle;
		\fill[line width=1.pt,color=qqccqq,fill=qqccqq,fill opacity=0.10000000149011612] (9.369923719539479,-3.6036865257614235) -- (9.369923719539479,-4.800378011187435) -- (11.442654173282403,-4.800378011187435) -- (11.442654173282403,-3.6036865257614235) -- cycle;
		\draw [line width=0.6pt,dotted] (0.,0.)-- (0.,1.);
		\draw [line width=0.6pt,dotted] (0.,1.)-- (-2.,1.);
		\draw [line width=0.6pt,dotted] (-2.,1.)-- (-2.,-3.);
		\draw [line width=0.6pt,dotted] (0.,0.)-- (-1.,0.);
		\draw [line width=0.6pt,dotted] (-1.,0.)-- (-1.,-2.);
		\draw [line width=0.6pt] (0.,0.)-- (0.,-1.);
		\draw [line width=0.6pt,dotted] (0.,0.)-- (1.,0.);
		\draw [line width=0.6pt,dotted] (1.,0.)-- (1.,2.);
		\draw [line width=0.6pt,dotted] (1.,2.)-- (-3.,2.);
		\draw [line width=0.6pt,dotted] (-1.,-2.)-- (1.219116183301343,-2.);
		\draw [line width=0.6pt,dotted] (-2.,-3.)-- (-0.5707741478766021,-3.);
		\draw [line width=0.6pt,dotted] (-3.,2.)-- (-3.,-0.8228886400887924);
		\draw [line width=0.6pt,dotted] (-0.5707741478766021,-3.)-- (-0.853699921814361,-7.062814113746213);
		\draw [line width=0.6pt,dotted] (1.219116183301343,-2.)-- (4.,-6.);
		\draw [line width=0.6pt,dotted] (1.0140092826374982,-1.)-- (5.724995960862981,-5.331578355146914);
		\draw [line width=0.6pt,dotted] (-3.,-0.8228886400887924)-- (-8.095258480143789,-2.8798052362057787);
		\draw [line width=0.6pt,dotted] (5.724995960862981,-5.331578355146914)-- (20.,-2.);
		\draw [line width=0.6pt] (10.,-3.4)-- (16.,-3.4);
		\draw [line width=0.6pt] (16.,-3.4)-- (16.00188432764547,-2.933102057195302);
		\draw [line width=0.2pt] (13.,-3.5131326540808905) circle (2cm);
		\draw [line width=0.6pt] (0.9134081329050977,-1.)-- (0.9134081329050975,-1.2538268537651163);
		\draw [line width=0.6pt] (0.9134081329050975,-1.2538268537651163)-- (1.7395412752136707,-1.2538268537651163);
		\draw [line width=0.6pt] (1.7395412752136707,-1.2538268537651163)-- (1.7395412752136712,-1.);
		\draw [line width=0.6pt] (1.7395412752136712,-1.)-- (2.251282017894359,-1.);
		\draw [line width=0.6pt] (2.251282017894359,-1.)-- (2.251282017894359,-1.4);
		\draw [line width=0.6pt] (2.251282017894359,-1.4)-- (2.63247571327363,-1.4);
		\draw [line width=0.6pt] (2.63247571327363,-1.4)-- (2.6324757132736303,-1.8);
		\draw [line width=0.6pt] (2.6324757132736303,-1.8)-- (3.385640646055102,-1.8);
		\draw [line width=0.6pt] (3.385640646055102,-1.8)-- (3.3952257760208435,-2.334014552605816);
		\draw [line width=0.6pt] (3.3952257760208435,-2.334014552605816)-- (4.099930793501993,-2.3340145526058156);
		\draw [line width=0.6pt] (4.099930793501993,-2.3340145526058156)-- (4.099930793501993,-2.8);
		\draw [line width=0.6pt] (4.099930793501993,-2.8)-- (4.4,-2.8);
		\draw [line width=0.6pt] (4.4,-2.8)-- (4.4,-3.3);
		\draw [line width=0.6pt] (4.4,-3.3)-- (5.7,-3.3);
		\draw [line width=0.6pt] (5.7,-3.3)-- (5.7,-4.2);
		\draw [line width=0.6pt] (5.7,-4.2)-- (6.8,-4.2);
		\draw [line width=0.6pt] (6.8,-4.2)-- (6.8,-4.);
		\draw [line width=0.6pt] (6.8,-4.)-- (9.369923719539475,-4.);
		\draw [line width=0.6pt] (9.369923719539475,-4.)-- (9.369923719539479,-3.6036865257614235);
		\draw [line width=0.6pt] (9.369923719539479,-3.6036865257614235)-- (9.999833074567189,-3.6036865257614235);
		\draw [line width=0.6pt] (9.999833074567189,-3.6036865257614235)-- (10.,-3.4);
		\draw [line width=0.6pt,dotted] (9.999833074567189,-3.6036865257614235)-- (9.999234539937886,-4.33403322691165);
		\draw [line width=0.6pt] (0.,-1.)-- (0.9134081329050977,-1.);
		\draw [line width=0.6pt,dotted] (0.9134081329050977,-1.)-- (1.0140092826374982,-1.);
		\end{tikzpicture}
		\caption{The above figure depicts how we choose a polyline parallel to coordinate axes along the boundary of chosen rectangles (shown in green) between (in this case) an intersection point and a jump point}\label{Fig:Rectangles}
	\end{figure}

	Recall the notation; for every point $(x_0,y_0)\in F_{\sigma}$ we have a segment (above denoted by $S_{x_0,y_0\sigma}$). Using an obvious adaptation of the notation of Proposition~\ref{Crossroads}, with $[(x_k(X_{i,j}),y_k(X_{i,j})) X_{i,j}]$ we denote the  segment of endpoints $(x_k(X_{i,j}),y_k(X_{i,j}))$ and $X_{i,j}$ and we define the set
	$$
		\kappa = \overline{ \Gamma \setminus \Big(\bigcup_{(x_0,y_0)\in F_{\sigma}}P_{(x_0,y_0)} \cup \bigcup_{X_{i,j}}\bigcup_{k=1}^4 \big[(x_k(X_{i,j}),y_k(X_{i,j})) X_{i,j}\big]\Big)}.
	$$
	Obviously $\kappa$ is a compact set. All points of $\kappa$ satisfy \eqref{Lock}. Thus by Lemma~\ref{NoName} we have a fine covering of $\kappa$ with rectangles each of which satisfies \eqref{Nuts} and each rectangle is associated to a particular point on $\kappa$. We choose a finite covering of $\kappa$ of the rectangles from Lemma~\ref{NoName} with the rectangles chosen so small that whenever a pair of rectangles intersect then either
	\begin{itemize}
		\item $R_1$, $R_2$ are associated with points on the same segment of $\Gamma$ and the associated points are not disconnected by some $(x_0,y_0)\in F_{\sigma}$
		\item $R_1$, $R_2$ are associated with points on neighbouring segments of $\Gamma$ whose common endpoint is some $\gamma_{i}(s_{i,k})$ (where $s_i,k$ has been defined in Definition \ref{Admissible}), not a point $X_{i,j}$.
	\end{itemize}
	Each rectangle can be made slightly bigger allowing us to choose horizontal lines with $y$-coordinate in $[-1,1]\setminus N_1$ (similarly for $x$-coordinate) since the set $A_n$ has positive measure. Therefore we may assume that the grid generated by the sides of all the rectangles and $\tilde{\Gamma}_2$ is a good straight grid for $f$. We may also assume that the covering we have chosen is minimal in the sense that it does not contain a strict sub-covering of $\kappa$. This grid is exactly $\tilde{\Gamma}$ mentioned at the start of the proof.
	
	Now we want to choose the set $\hat{\Gamma} \subset \tilde{\Gamma}$. Each of the polylines $P_{(x_0,y_0)}$ intersect exactly two of the rectangles $R_1, R_2$ chosen in the previous paragraph using Lemma~\ref{NoName} (each containing exactly one endpoint of $P_{(x_0,y_0)}$). The part of $P_{(x_0,y_0)}$ included in $\hat{\Gamma}$ is $\overline{P_{(x_0,y_0)}\setminus(R_1\cup R_2)}$.
	
	For each $X_{i,j}$, we defined four polylines $\tilde{\gamma}_1^{X_{i,j}}, \dots, \tilde{\gamma}_4^{X_{i,j}}$, each of which intersects exactly one (and distinct) rectangle $R_1, \dots, R_4$ chosen using Lemma~\ref{NoName}. We include each $\overline{\tilde{\gamma}_k^{X_{i,j}} \setminus R_k}$ in $\hat{\Gamma}$ for $k=1,2,3,4$ and do this at each $X_{i,j}$.
	
	Let $R$ be a rectangle chosen using Lemma~\ref{NoName} such that we already have exactly one point in $\hat{\Gamma}\cap \partial R$. Then $R$ has a neighbouring $R'$ (also chosen using Lemma~\ref{NoName}) and there exists a path on $\partial R$ from the previously chosen point in $\hat{\Gamma}\cap \partial R$ to a point in $\partial R'$. We include this path in $\hat{\Gamma}$.
	
	If we have a rectangle $R$ chosen using Lemma~\ref{NoName} such that $\hat{\Gamma}\cap \partial R$ contains exactly two points. Then there exists a path on $\partial R$ between the two points of $\hat{\Gamma}\cap \partial R$. We include this path in $\hat{\Gamma}$. After dealing with the finite number of rectangles we have defined $\hat{\Gamma}$.

	Now we describe how to define $g$ on $\hat{\Gamma}$. We define $g$ on $\hat{\Gamma} \cap P_{(x_0,y_0)}$ as the constant speed map from $\hat{\Gamma} \cap P_{(x_0,y_0)}$ onto the segment $S_{x_0,y_0, \sigma}$. Then $g$ is injective and continuous on $\hat{\Gamma} \cap P_{(x_0,y_0)}$. On $\hat{\Gamma} \cap \tilde{\gamma}_k^{X_{i,j}}$ we define $g$ as the constant speed map onto the segment $\big[\big(x_k(X_{i,j}),y_k(X_{i,j})\big) X_{i,j}\big]$. For any pair of rectangles $R_1,R_2$ chosen using Lemma~\ref{NoName} we choose a point $Z_{R_1,R_2} \in \Gamma \cap R_1 \cap R_2$. Then for each rectangle $R$ chosen using Lemma~\ref{NoName} we define $g$ as the constant speed map from $\hat{\Gamma} \cap \partial R$ onto the part of $\Gamma$ between the two points $Z_{R,R'}$ and $Z_{R, R''}$, where $R'$ and $R''$ are the two neighbours of $R$. Alternatively if $R$ neighbours $R'$ and $P_{(x_0,y_0)}$ we define $g$ as constant speed onto the part of $\Gamma$ between $Z_{R, R'}$ and the corresponding endpoint of the polyline $P_{(x_0,y_0)}$. Finally, if $R$ neighbours $R'$ and $\tilde{\gamma}_k^{X_{i,j}}$ we define $g$ as constant speed onto the part of $\Gamma$ between $Z_{R, R'}$ and the corresponding endpoint of the polyline $\tilde{\gamma}_k^{X_{i,j}}$.
	
	We separate $\hat{\Gamma}$ into three pieces; parts of $\partial R$, the polylines $\tilde{\gamma}_k^{X_{i,j}}\cap \hat{\Gamma}$ and the polylines $P_{(x_0,y_0)} \cap \hat{\Gamma}$. In each case we refer to the corresponding polyline as $E$. In the first two cases we have that $|Df_{\tilde{\Gamma}}|(E) \leq \tfrac{1}{4}\sigma$ by Lemma~\ref{NoName} and Proposition~\ref{Crossroads}. Then an injective $H_{\sigma}$ with $\|H_{\sigma} - f\|_{\infty, \tilde{\Gamma}}<\sigma/4$ has oscillation bounded by $\tfrac{1}{2}\sigma$ on $E$. The same oscillation estimates holds on the segment ${\Gamma} \cap R$ and $f$  is continuous at the intersection points of $\Gamma \cap \partial R$. Therefore it holds that $\|H_{\sigma}\circ g^{-1}-f\|_{\infty, E} < \sigma$. The only case that needs closer consideration is the case of large jumps, i.e. $E = P_{(x_0,y_0)} \cap \hat{\Gamma}$. But in this case we have chosen the endpoints of $P_{(x_0,y_0)} \cap \hat{\Gamma}$ so that their image is very close to the corresponding one sided limits at the jump point $(x_0,y_0)$ (by \eqref{Close1} and \eqref{Close2} the error is bounded by $4(\tfrac{1}{100}\sigma)^2$). Then by Lemma~\ref{Repetition} we have that $\|H_{\sigma}\circ g^{-1}-f\|_{\infty, E} < \tfrac{1}{4}\sigma$ and so we get our required result.
	
\end{proof}

\section{Construction of a sequence converging area-strictly to $f$ if $f$ satisfies the $NCBV^+$ condition}

To approximate $f$ area-strictly we need to isolate the majority of the singular part of $Df$ from the majority of the absolutely continuous part of $Df$. That is the goal of the following lemma.

\begin{lemma}\label{Isolationism}
	Let $f\in \BV(Q(0,1))$ and let $\epsilon >0$ then there exists a finite number of squares $\{Q_i\}_{i=1}^{N_0}$ (whose union we denote as $\tilde{F}_{\epsilon} = \bigcup_{i=1}^{N_0} Q_i$) such that 
	$$
		|D^af|(\tilde{F}_{\epsilon}) \leq \epsilon |D^af|(Q(0,1))
	$$
	and
	$$
		|D^sf|\big(Q(0,1)\setminus \tilde{F}_{\epsilon} \big)| \leq \epsilon |D^sf|\big(Q(0,1)\big)
	$$
	Further, if $|D^af|\big(Q(0,1)\big) > 0$, then
	$$
		|D^sf|\big(Q(0,1)\setminus \tilde{F}_{\epsilon} \big)| \leq \epsilon^2 |D^af|\big(Q(0,1)\big).
	$$
\end{lemma}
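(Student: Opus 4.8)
The plan is to exploit the mutual singularity $D^s f\perp\L^2$ together with the absolute continuity $D^a f\ll\L^2$: I would trap almost all of the mass of $|D^s f|$ in a compact, $\L^2$-null set, cover that set by finitely many small squares of tiny total area, and then observe that such a cover carries negligible $|D^a f|$-mass by absolute continuity.

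In detail: write $a:=|D^a f|(Q(0,1))$ and $s:=|D^s f|(Q(0,1))$. If $s=0$ the statement is trivial (take $\tilde F_\epsilon=\emptyset$), so assume $s>0$, and set $\delta:=\epsilon s$ when $a=0$ and $\delta:=\min\{\epsilon s,\,\epsilon^2 a\}$ when $a>0$, so that $\delta>0$ in both cases. Since $|D^s f|\perp\L^2$ there is a Borel set $E_0\subseteq Q(0,1)$ with $\L^2(E_0)=0$ and $|D^s f|(Q(0,1)\setminus E_0)=0$. As $|D^s f|$ is a finite Radon measure on the open set $Q(0,1)$, inner regularity yields a compact $K\subseteq E_0$ with $|D^s f|(Q(0,1)\setminus K)\leq\delta$; note $\L^2(K)\leq\L^2(E_0)=0$, and since $K$ is a compact subset of the open square $Q(0,1)$ we have $\dist(K,\partial Q(0,1))>0$.

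Next I would use the $\epsilon$-$\eta$ form of absolute continuity for the finite measure $|D^a f|$: there is $\eta>0$ such that $\L^2(A)<\eta$ implies $|D^a f|(A)\leq\epsilon a$ for every Borel $A$ (automatic when $a=0$). Because $\L^2(K)=0$, I can cover $K$ by countably many open squares contained in $Q(0,1)$ — taking side lengths below $\dist(K,\partial Q(0,1))$ — with total area $<\eta$, and by compactness of $K$ finitely many of them, $Q_1,\dots,Q_{N_0}$, already cover $K$. Setting $\tilde F_\epsilon:=\bigcup_{i=1}^{N_0}Q_i$ gives $\L^2(\tilde F_\epsilon)\leq\sum_i\L^2(Q_i)<\eta$, hence $|D^a f|(\tilde F_\epsilon)\leq\epsilon a=\epsilon|D^a f|(Q(0,1))$. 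Since $K\subseteq\tilde F_\epsilon$, also $Q(0,1)\setminus\tilde F_\epsilon\subseteq Q(0,1)\setminus K$, so $|D^s f|(Q(0,1)\setminus\tilde F_\epsilon)\leq\delta$, which is $\leq\epsilon|D^s f|(Q(0,1))$ and, when $a>0$, also $\leq\epsilon^2|D^a f|(Q(0,1))$.

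I do not expect a genuine obstacle here; the argument is a routine combination of mutual singularity, inner regularity and absolute continuity. The only points requiring a little care are (i) choosing $K$ inside the $\L^2$-null concentration set of $|D^s f|$, which is exactly what makes an arbitrarily area-efficient finite cover by squares available, and (ii) fixing $\delta$ at the outset so that the single set $K$ yields both the plain bound and the sharpened bound $\epsilon^2 a$ in the case $a>0$. If the $Q_i$ are later wanted closed or pairwise non-overlapping, one replaces the open cover by the closed dyadic squares of a sufficiently fine mesh that meet $K$, whose union still has area $<\eta$; this changes nothing in the argument.
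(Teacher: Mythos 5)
Your argument is correct and proves exactly what the lemma asserts, but it follows a genuinely different route from the paper. The paper works with the set $S=\{x:\lim_{r\to0}r^{-2}|Df|(Q(x,r))=\infty\}$ on which $D^sf$ is concentrated (by \cite[Proposition~3.92]{AFP}), controls its measure through the density-level sets $A_M^R$, and produces the finite family of squares via the Vitali--Besicovitch covering theorem (yielding pairwise disjoint squares centred at points of high $|Df|$-density). You instead go directly through the abstract singular decomposition: mutual singularity $|D^sf|\perp\L^2$ gives an $\L^2$-null concentration set $E_0$, inner regularity gives a compact $K\subset E_0$ carrying all but $\delta$ of the singular mass, and then $\L^2(K)=0$ lets you find an arbitrarily small-area open cover by squares, finitely many of which suffice by compactness; the $\epsilon$-$\eta$ form of absolute continuity of $|D^af|\ll\L^2$ then controls the absolutely continuous mass of the cover. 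Your proof is shorter and more elementary, avoiding the covering theorem entirely. What the paper's approach buys is a construction phrased in terms of the density set $S$ and density thresholds, which is notationally consonant with the subsequent Theorem~\ref{CEZ} (where $S$ reappears in the choice of the points $w_i$), but that extra structure is not actually required by the statement of the lemma, and the selection of $w_i\in Q_i\cap S$ in Theorem~\ref{CEZ} is carried out independently; your $\tilde F_\epsilon$ would serve just as well as input there.
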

\begin{proof}
	We assume that $|D^af|\big(Q(0,1)\big) > 0$ because the opposite case easily follows from the proof for this case. We may assume that $|D^sf|(Q(0,1))>0$ because otherwise the claim is nothing but the absolute continuity of the integral. Let $\delta > 0$ be a number chosen small enough that
	$$
		|D^af|(A) \leq \epsilon \min\big\{1, |D^af|\big(Q(0,1)\big), |D^sf|\big(Q(0,1)\big)\big\}
	$$
	for any $A\subset Q(0,1)$ such that $\L^2(A)< \delta$. We call the set
	$$
		S = \Big\{x\in Q(0,1);  \ \lim_{r\to 0}\frac{|Df|(Q(x,r))}{r^2} = \infty\Big\}
	$$
	and similarly we call
	$$
		A_M^R = \Big\{x\in Q(0,1)\colon  \ \frac{|Df|(Q(x,r))}{r^2} > 4M \textrm{ for every }   0 < r < R \Big\}.
	$$
	Recall that by \cite[Proposition 3.92]{AFP}
	\begin{equation}\label{SingOnS}
		|D^sf|(Q(0,1)\setminus S) = 0.
	\end{equation}
	By the Vitali-Besicovitch theorem \cite[Theorem 2.19]{AFP} we have a countable collection of pairwise disjoint squares $Q(x_i,r_i)$ with $x_i \in A_M^R$ and $0<r_i <R$ covering $A_M^R$ (up to a set of $\L^n+|Df|$-measure 0). Using $\frac{|Df|(Q(x_i,r_i))}{r_i^2} > 4M$ we get
	$$
		\L^2(A_M^R) \leq \sum_{i=1}^\infty 4r^2_i < \frac{|Df|\big(Q(0,1)\big)}{M}.
	$$
	and by choosing $M = \delta^{-1} 2|Df|(Q(0,1))$ we have 
	\begin{equation}\label{TestyEsti}
		\L^2(A_M^R) < \frac{\delta}{2}.
	\end{equation}
	Obviously for all $M>0$ and all $R_0 >0$ we have
	$$
		S \subset \bigcup_{0<R<R_0} A_M^R,
	$$
	especially the inclusion $A_M^{R_1} \subset  A_M^{R_2}$ holds for any $R_1 > R_2> 0$.
	This shows that $\L^2 (S) = 0$ but also, using \eqref{SingOnS}, that $|D^sf|(S \setminus A_M^R) \to 0$ as $R\to 0^+$ (the limit makes sense thanks to the previous inclusion). Find then an $R_0$ such that
	\begin{equation}\label{Optics}
\begin{split}
	&|D^sf|(S \setminus A_M^{R_0}) < \frac{\epsilon}{2} |D^s f|(Q(0,1)), \qquad |D^sf|(S \setminus A_M^{R_0}) < \frac{\epsilon^2}{2} |D^a f|(Q(0,1)), \\
	& \L^2\big(A_M^{R_0} + B_{R_0}(0)\big) \leq \delta,
	\end{split}
	\end{equation}
	where with $A_M^{R_0} + B_{R_0}(0)$ we mean the set $\{ x \in \R^2 :\, \dist(x,A_M^{R_0}) < R_0 \}$.
	
	The Vitali-Besicovitch theorem gives a countable number of pairwise disjoint squares $Q_i = Q(x_i, R_i)$, $x_i \in A_M^{R_0}$ with $R_i \leq R_0$. Since $\lim_{N\to \infty}|D^sf|(A_M^{R_0} \setminus \bigcup_{i=1}^{N} Q_i) = 0$ we can find a $N_0$ such that
	$$
		|D^sf|\Big(A_M^{R_0} \setminus \bigcup_{i=1}^{N_0} Q_i\Big) < \frac{\epsilon}{2} |D^s f|\big(Q(0,1)\big)
	$$
	and similarly
	$$
		|D^sf|\Big(A_M^{R_0} \setminus \bigcup_{i=1}^{N_0} Q_i\Big) < \frac{\epsilon^2}{2} |D^a f|\big(Q(0,1)\big).
	$$
	Combining the last two estimates with \eqref{SingOnS} and \eqref{Optics} we have
	$$
		\begin{aligned}
			|D^sf|\Big(Q(0,1) \setminus \bigcup_{i=1}^{N_0} Q_i\Big) &< \epsilon |D^s f|(Q(0,1)) \text{ and }\\
			|D^sf|\Big(Q(0,1) \setminus \bigcup_{i=1}^{N_0} Q_i\Big) &< \epsilon^2 |D^a f|(Q(0,1)) .
		\end{aligned}
	$$
	On the other hand, by the third of \eqref{Optics}, \eqref{TestyEsti} and the choice of $\delta$, we have
	$$
		|D^af|\Big(\bigcup_{i=1}^{N_0} Q_i \Big) \leq \epsilon \min\{1, |D^af|(Q(0,1)), |D^sf|(Q(0,1))\}.
	$$
\end{proof}

The following theorem is used to divide $Q(0,1)$ up into small squares (side length is $2^{1-K}$) which have different properties. In the squares of type $E_{\epsilon, K}$ the majority of the behaviour comes from the singular part of the derivative and the direction map of the polar decomposition of the derivative satisfies a Lebesgue-point-type estimate based on the choice of $\epsilon$. Further we categorise the other squares, which are used to approximate $D^af$ in $L^1$. The categories are $G_{\epsilon, \alpha, K}$ (where $f$ is very close to a nice affine map), $T_{\epsilon, \alpha, K}$ (where $f$ is very close to a non-constant affine map with zero Jacobian) and $W_{\epsilon, \alpha, K}$ (where $f$ is nearly constant or does not behave `$\epsilon$-similarly' to any affine map). In the following theorem we refer to the set $\tilde{F}_{\epsilon}$ defined in Lemma~\ref{Isolationism}. 
\begin{thm}\label{CEZ}
	Let $f\in BV(Q(0,1))$ be an NCBV map and let $\epsilon > 0$. There exists an $0<\alpha_0 < \epsilon$ such that for any $0<\alpha\leq \alpha_0$ the following holds. There exists a $K = K(\epsilon, \alpha)\in \en$ such that the division of $Q(0,1)$ into $K$-dyadic squares $\{Q_i\}_{i = 1}^{2^{2K}}$ has the following properties
	\begin{enumerate}
		\item calling $F_{\epsilon, K}$ the set of $K$-dyadic squares which themselves intersect $\tilde{F}_{\epsilon}$ or have a neighbouring $K$-dyadic square that intersects $\tilde{F}_{\epsilon}$ and calling $\tilde{F}_{\epsilon, K}= \bigcup_{Q_i \in F_{\epsilon,K} } Q_i$ it holds that
		$$
			|D^af|(\tilde{F}_{\epsilon, K}) \leq 2\epsilon|D^a f|\big(Q(0,1)\big),
		$$
		\item it holds that
		$$
			|D^s f|\big(Q(0,1) \setminus\tilde{F}_{\epsilon, K} \big) \leq \epsilon |D^sf|\big(Q(0,1)\big)
		$$
		and if $|D^a f|\big(Q(0,1)\big) > 0$ then
		$$
			|D^s f|\big(Q(0,1) \setminus \tilde{F}_{\epsilon, K} \big) \leq \epsilon^2 |D^af|\big(Q(0,1)\big),
		$$
		\item there exists a subselection $E_{\epsilon, K}$ of squares of $F_{\epsilon, K}$ (whose union we denote as $\tilde{E}_{\epsilon, K}$) such that
		$$
			|D^s f|(Q(0,1) \setminus \tilde{E}_{\epsilon, K}) \leq 2\epsilon |D^s f|\big(Q(0,1)\big)
		$$
		and for any $Q_i \in E_{\epsilon, K}$ there exists a $w_i \in Q_i\cap S$ such that
		\begin{equation}\label{Piano}
			\int_{S\cap 2Q_i}|g(z)-g(w_i)|d|D^sf|(z) \leq \epsilon |D^sf|(4Q_i \cap S ),
		\end{equation}
		where $g\in L^{1}(|D^sf|, \er^2)$, $g|D^s f| = D^s f$ is the polar decomposition of $D^sf$ and $g(w_i) = u_i\otimes v_i$ for an appropriate $|u_i| = |v_i|=1$.
		\item Further, all squares $Q_i$ with $Q_i \cap \tilde{F}_{\epsilon} = \emptyset$, are separated into three disjoint categories $G_{\epsilon, \alpha, K}$, $T_{\epsilon, \alpha, K}$ and $W_{\epsilon, \alpha, K}$ (their unions denoted by $\tilde{G}_{\epsilon, \alpha, K}$, $\tilde{T}_{\epsilon, \alpha, K}$ and $\tilde{W}_{\epsilon, \alpha, K}$) such that
		\begin{equation}\label{Judy}
			|D^a f|\big(\tilde{W}_{\epsilon, \alpha, K} \big) \leq 8\epsilon \big[|D^a f|\big(Q(0,1)\big) + 1\big]
		\end{equation}
		and, for any $Q_i = Q(c_i, 2^{-K}) \in G_{\epsilon, \alpha, K}\cup T_{\epsilon, \alpha, K}$ there exists an $w_i \in Q(c_i, 2^{-K-2})$ and a set $Z_{i, \alpha}\subset Q(c_i, 2^{1-K})$ with $\L^2(Q(c_i, 2^{1-K}) \setminus Z_{i,\alpha})\leq 2^{-2K-9}$ such that
		\begin{equation}\label{Scorpions}
				\begin{aligned}
					\int_{Q(c_i, 2^{2-K})} |\nabla f(y) - \nabla f(w_i)| d\L^2(y) &\leq \epsilon \alpha^2 2^{-2K},\\
					\|f(\cdot) - f(w_i) - \nabla f(w_i)(\cdot - w_i)\|_{L^\infty(Z_{i,\alpha})} &< \alpha^4 2^{-K},\\
					\alpha_0\leq |\nabla f(w_i)|\leq \alpha_0^{-1}&,\\
			\end{aligned}
		\end{equation}
		and
		$$
			|D^sf|(Q_i) \leq \epsilon |D^a f|(Q_i).
		$$
		In the case that $Q_i \in G_{\epsilon, \alpha, K}$ it holds that $\alpha_0 < \det \nabla f(w_i)$ and in the case $Q_i \in T_{\epsilon, \alpha, K}$ it holds that $\det \nabla f(w_i) = 0$.
	\end{enumerate}
\end{thm}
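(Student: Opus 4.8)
\medskip

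\noindent\emph{Strategy.} The proof is a chain of Lebesgue–point, Egorov and density selections, with parameters fixed in the order: first $\epsilon$, then $\alpha_0$ (depending on $\epsilon$ and $f$), then two ``good sets'' $\mathcal G,\mathcal H$ (depending only on $\epsilon,\alpha_0$), and finally $K=K(\epsilon,\alpha)$ for each $\alpha\in(0,\alpha_0]$. I would begin by applying Lemma~\ref{Isolationism} to obtain $\tilde F_\epsilon=\bigcup_i Q_i$; since for $K$ large $\tilde F_{\epsilon,K}$ lies in a $C2^{-K}$–neighbourhood of $\tilde F_\epsilon$, absolute continuity of $y\mapsto\int|\nabla f|\,dy$ gives $|D^af|(\tilde F_{\epsilon,K})\le 2\epsilon|D^af|(Q(0,1))$, i.e.\ (1), while $\tilde F_{\epsilon,K}\supseteq\tilde F_\epsilon$ makes (2) immediate from Lemma~\ref{Isolationism}. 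I would then record the $\L^2$–a.e.\ facts to be used throughout: every point is a Lebesgue point of $\nabla f$ and a point of $L^{1^{*}}$–differentiability of $f$ (in particular $r^{-1}\fint_{Q(x,r)}|f(y)-f(x)-\nabla f(x)(y-x)|\,dy\to0$), by the structure theory of $BV$ maps \cite{AFP,EG}; $\det\nabla f\ge0$ $\L^2$–a.e.\ (a known property of $NCBV$ maps); $|D^sf|$ is concentrated on the $\L^2$–null set $S=\{x:\ r^{-2}|Df|(Q(x,r))\to\infty\}$; by Alberti's theorem (Theorem~\ref{ARO}) the polar density $g$ of $D^sf$ is of the form $u\otimes v$ at $|D^sf|$–a.e.\ point; and $|D^sf|$–a.e.\ point is a Lebesgue point of $g$ with respect to $|D^sf|$.

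\medskip

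\noindent\emph{The partition and the good sets.} With $\alpha_0>0$ still to be chosen I would split $Q(0,1)$ ($\L^2$–a.e.) into the four measurable pieces
\[
\begin{aligned}
P_G&=\{\det\nabla f>\alpha_0,\ \alpha_0\le|\nabla f|\le\alpha_0^{-1}\}, &
P_T&=\{\det\nabla f=0,\ \alpha_0\le|\nabla f|\le\alpha_0^{-1}\},\\
P_W^{\mathrm s}&=\{|\nabla f|<\alpha_0\}\cup\{|\nabla f|>\alpha_0^{-1}\}, &
P_W^{\mathrm d}&=\{0<\det\nabla f\le\alpha_0\}\setminus P_W^{\mathrm s}.
\end{aligned}
\]
Each of the four is the intersection of the complements of the other three, so by the Lebesgue density theorem $\L^2$–a.e.\ point of each piece is a density point of that piece — this is exactly what lets a $K$–dyadic square be labelled unambiguously. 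I would choose $\alpha_0\in(0,\epsilon)$ small enough (using dominated convergence, since $\{0<\det\nabla f\le\alpha_0\}\downarrow\emptyset$, together with absolute continuity of $\int|\nabla f|$) that $|D^af|(P_W^{\mathrm s})<\epsilon$, $|D^af|(P_W^{\mathrm d})<\epsilon$ and $\L^2(\{|\nabla f|>\alpha_0^{-1}\})$ is as small as needed. Then Egorov's theorem gives a set $\mathcal G$, with $\L^2(Q(0,1)\setminus\mathcal G)$ so small that $|D^af|$ of it and of any fixed dilate of it is $<\epsilon$, on which $\fint_{Q(x,r)}|\nabla f-\nabla f(x)|\to0$, $r^{-1}\fint_{Q(x,r)}|f(y)-f(x)-\nabla f(x)(y-x)|\,dy\to0$ and $\fint_{Q(x,r)}\mathbf 1_{P(x)}\to1$ uniformly (with $P(x)$ the piece containing $x$); and a set $\mathcal H\subseteq S$ inside the rank-one set, with $|D^sf|(S\setminus\mathcal H)\le\epsilon|D^sf|(Q(0,1))$, on which $|D^sf|(Q(x,r))^{-1}\int_{Q(x,r)}|g-g(x)|\,d|D^sf|\to0$ uniformly.

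\medskip

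\noindent\emph{Choosing $K$ and labelling the squares.} For each $\alpha\in(0,\alpha_0]$ I would take $K$ so large that at all scales $r\le 2^{3-K}$ the uniform rates on $\mathcal G$ are below thresholds comparable to $\epsilon\alpha^2$ (for the gradient bound in \eqref{Scorpions}) and to $\alpha^4$ (so that Chebyshev's inequality turns the $L^{1^{*}}$–rate into the $L^\infty$–bound in \eqref{Scorpions}), the density rate is below $\tfrac1{16}$, and the uniform $|D^sf|$–rate on $\mathcal H$ is below $\epsilon$. With $F_{\epsilon,K}$ as in the statement I would set $E_{\epsilon,K}=\{Q_i\in F_{\epsilon,K}:\ Q_i\cap\mathcal H\ne\emptyset\}$; for $Q_i\in E_{\epsilon,K}$ pick $w_i\in Q_i\cap\mathcal H$, so $g(w_i)=u_i\otimes v_i$ by Alberti, and \eqref{Piano} follows from the uniform Besicovitch estimate together with the inclusions $2Q_i\subseteq Q(w_i,3\cdot2^{-K})\subseteq 4Q_i$; the mass estimate for $\tilde E_{\epsilon,K}$ in (3) comes from (2) and the fact that every point of $\mathcal H$ lying in $\tilde F_{\epsilon,K}$ (off the $\L^2$–null dyadic skeleton) lies in a square of $E_{\epsilon,K}$. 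For a square $Q_i$ disjoint from $\tilde F_\epsilon$ I would send it to $W_{\epsilon,\alpha,K}$ if $Q(c_i,2^{-K-2})\cap\mathcal G=\emptyset$ or $|D^sf|(Q_i)>\epsilon|D^af|(Q_i)$; otherwise I would pick $w_i\in Q(c_i,2^{-K-2})\cap\mathcal G$, and since $K$ is large and the density rate is $<\tfrac1{16}$, all of $\mathcal G\cap Q(c_i,2^{-K-2})$ lies in one of the four pieces, which labels $Q_i$ as $G_{\epsilon,\alpha,K}$, $T_{\epsilon,\alpha,K}$ or $W_{\epsilon,\alpha,K}$. For $Q_i\in G_{\epsilon,\alpha,K}\cup T_{\epsilon,\alpha,K}$ the estimates \eqref{Scorpions} then follow: the gradient integral from the uniform Lebesgue convergence at $w_i$; the $L^\infty$–bound by taking $Z_{i,\alpha}=Q(c_i,2^{1-K})\setminus\{|f-f(w_i)-\nabla f(w_i)(\cdot-w_i)|\ge\alpha^42^{-K}\}$ and bounding the discarded set by Chebyshev from the uniform $L^{1^{*}}$–rate; the bounds on $|\nabla f(w_i)|$ and on $\det\nabla f(w_i)$ from $w_i\in P_G$, resp.\ $w_i\in P_T$; and $|D^sf|(Q_i)\le\epsilon|D^af|(Q_i)$ since $Q_i$ was not discarded for that reason.

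\medskip

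\noindent\emph{The crux: the bound \eqref{Judy} on $|D^af|(\tilde W_{\epsilon,\alpha,K})$.} This is the step I expect to be the main obstacle, and it is what forces the particular four–fold partition above. I would split the discarded squares according to the reason: (i) those with $Q(c_i,2^{-K-2})\cap\mathcal G=\emptyset$ have total $\L^2$–measure at most $16\,\L^2(Q(0,1)\setminus\mathcal G)$, so $|D^af|<\epsilon$ on them by absolute continuity; (ii) those with $|D^sf|(Q_i)>\epsilon|D^af|(Q_i)$ satisfy $\sum|D^af|(Q_i)\le\epsilon^{-1}|D^sf|(Q(0,1)\setminus\tilde F_\epsilon)\le\epsilon|D^af|(Q(0,1))$ by the refined estimate of Lemma~\ref{Isolationism} (and contribute $0$ when $|D^af|\equiv0$, which is exactly what the ``$+1$'' absorbs); (iii) a regular square labelled $W$ has $w_i\in P_W^{\mathrm s}\cup P_W^{\mathrm d}$, and one distinguishes: if $|\nabla f(w_i)|<\alpha_0$ then $\fint_{Q(c_i,2^{2-K})}|\nabla f|<2\alpha_0$ by the uniform Lebesgue estimate, so $|D^af|(Q_i)\lesssim\alpha_0\L^2(Q_i)$ and the total is $\lesssim\alpha_0<\epsilon$; if $|\nabla f(w_i)|>\alpha_0^{-1}$ then $Q_i\subseteq\{M(|\nabla f|)>\tfrac12\alpha_0^{-1}\}$, whose $\L^2$–measure is $\lesssim\alpha_0\|\nabla f\|_{L^1}$ by the weak $(1,1)$ inequality, so again $|D^af|<\epsilon$; and if $0<\det\nabla f(w_i)\le\alpha_0$ then $w_i$ is a density point of $P_W^{\mathrm d}$, so $Q_i$ meets $P_W^{\mathrm d}$ in all but a $\tau$–fraction (controlled by $K$), whence $|D^af|(Q_i)\le\int_{Q_i\cap P_W^{\mathrm d}}|\nabla f|+(\alpha_0^{-1}+1)\tau\L^2(Q_i)+\int_{\{|\nabla f|>\alpha_0^{-1}+1\}\cap Q_i}|\nabla f|$; summing the first term gives $\le|D^af|(P_W^{\mathrm d})<\epsilon$, the second is $<\epsilon$ once $\tau<\epsilon\alpha_0/8$, and the third is $<\epsilon$ for $\alpha_0$ small. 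Adding the boundedly many pieces yields $|D^af|(\tilde W_{\epsilon,\alpha,K})\le8\epsilon\,[\,|D^af|(Q(0,1))+1\,]$. The two subtleties I would be careful about: in the degenerate–Jacobian case one must \emph{not} estimate $|D^af|$ of a set by $\alpha_0^{-1}$ times its Lebesgue measure — which would blow up — but use $\int_{\{0<\det\nabla f\le\alpha_0\}}|\nabla f|\to0$ directly; and passing from $L^{1}$–type differentiability to the $L^\infty$–estimate on a set of all-but-$2^{-2K-9}$ measure is precisely what pins down how small $2^{-K}$ must be chosen in terms of $\alpha$.
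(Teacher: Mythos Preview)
Your proof is correct and follows the same skeleton as the paper's: Lemma~\ref{Isolationism} for (1)--(2), $|D^sf|$-Lebesgue points of the polar density $g$ for (3), and $\L^2$-Lebesgue points of $\nabla f$ together with approximate differentiability for (4), with $\alpha_0$ chosen so that the ``bad gradient/Jacobian'' set is small. Two packaging differences are worth noting. First, where you invoke Egorov to produce $\mathcal G$, the paper simply defines the bad set $\tilde Y_{\epsilon,\alpha,K}$ of points where either Lebesgue-point estimate fails at \emph{some} scale $r<2^{-K}$ and observes $\L^2(\tilde Y_{\epsilon,\alpha,K})\to0$; this is the same information without the Egorov wrapper. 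Second, and more substantively for \eqref{Judy}: the paper merges your cases (i) and (iii) into the single criterion $Q(c_i,2^{-K-2})\subset\tilde P_{\alpha_0}\cup\tilde Y_{\epsilon,\alpha,K}$ (with $\tilde P_{\alpha_0}=\{|\nabla f|\notin[\alpha_0,\alpha_0^{-1}]\}\cup\{0<\det\nabla f<\alpha_0\}$), so that the contribution of these squares to $|D^af|$ is bounded in one stroke by absolute continuity, since their total Lebesgue measure is at most $16\,\L^2(\tilde P_{\alpha_0}\cup\tilde Y_{\epsilon,\alpha,K})$. This replaces your maximal-function and density-point subcase analysis by a single line; in exchange, the paper needs an extra discard rule $|D^af|(Q_i)<2\alpha_0\L^2(Q_i)$ to guarantee $|\nabla f(w_i)|\ge\alpha_0$ a posteriori (since its $\tilde P_{\alpha_0}$ excludes $\{\nabla f=0\}$), which is where the ``$+1$'' in \eqref{Judy} enters for the paper. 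You also correctly make explicit the input $\det\nabla f\ge0$ $\L^2$-a.e.\ for $NCBV$ maps, which the paper uses without comment when it asserts ``either $\det\nabla f(w_i)\ge\alpha_0$ or $\det\nabla f(w_i)=0$''.
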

\begin{proof}
	\step{1}{Prove (1) and (2) by applying Lemma~\ref{Isolationism}}{CEZS1}
	
	The set $\tilde{F}_{\epsilon}$ is the union of a finite number of disjoint squares. Then as $K\to \infty$ we clearly have $\L^2(\tilde{F}_{\epsilon, K} \setminus \tilde{F}_{\epsilon}) \to 0$. Therefore we find a $K_0$ such that for any $K\geq K_0$ we have $\L^2(\tilde{F}_{\epsilon, K} \setminus \tilde{F}_{\epsilon})<\delta$, where $\delta$ is so small that $|D^af|(A) \leq \epsilon |D^af|(Q(0,1))$ as soon as $\L^2(A)< \delta$. Since $\tilde{F}_{\epsilon}$ was chosen so that $|D^af|(\tilde{F}_{\epsilon}) \leq \epsilon |D^af|(Q(0,1))$, we satisfy point (1) of the claim. The set $\tilde{F}_{\epsilon, K} \supset \tilde{F}_{\epsilon}$ and so point (2) of our claim is immediate from Lemma~\ref{Isolationism}.
	
	\step{2}{Find a $K_1$ that allows us to prove (3)}{CEZS2}
	
	Let us call $g$ the function of the so-called polar decomposition of $D^sf$. Then $|g|=1$ $|D^sf|$-almost everywhere and $D^sf = g|D^sf|$. The function $g\in L^1(Q(0,1), |D^sf|, \er^2)$ and so $|D^sf|$ almost every point of $S$ is a Lebesgue point of $g$ with respect to $|D^sf|$. Recall that the singular part of the derivative of $f$ is supported on $S$, i.e. $D^sf = D^sf_{\rceil S}$ (see \cite[Proposition 3.92]{AFP}). As a result of this (see \cite[Theorem 1.33]{EG}) for $|D^sf|$-almost every $w \in S$ it holds that
	$$
		\frac{1}{|D^sf|(Q(w,r) \cap S )}{\int_{Q(w,r) \cap S}|g(z)-g(w)|d|D^sf|(z)} \xrightarrow{r\to 0^+} 0.
	$$
	Thus, for any given $\epsilon$ the $|D^sf|$ measure of the set of points $w \in S$ such that
	$$
		\int_{Q(w,r) \cap S}|g(z)-g(w)|d|D^sf|(z)> \epsilon |D^sf|(Q(w,r) \cap S )
	$$
	for some $0<r<2^{-K}$ tends to zero as $K \to \infty$. Thus we find a $K_1$ so that the $|D^sf|$ measure of this set is bounded by $\epsilon |D^sf|(Q(0,1))$. Call $\tilde{X}_{\epsilon, K}$ the set of $w\in S$ such that
	$$
		\int_{Q(w,r) \cap S}|g(z)-g(w)|d|D^sf|(z) \leq \epsilon |D^sf|(Q(w,r) \cap S ) \text{ for all } 0< r< 2^{-K_1}.
	$$
	For any $K\geq K_1 + 1$ we have for any $Q_i = Q(c_i,2^{-K}) \in F_{\epsilon, K}$ such that $Q_i \cap \tilde{X}_{\epsilon, K} \neq \emptyset$  and for any choice of $w_i \in Q_i \cap \tilde{X}_{\epsilon, K}$ that
	$$
		\begin{aligned}
			\int_{Q(c_i,2^{-K}) \cap S}|g(z)-g(w_i)|d|D^sf|(z)
			&\leq \int_{Q(w_i,2^{1-K}) \cap S}|g(z)-g(w_i)|d|D^sf|(z)\\
			&\leq \epsilon |D^sf|\big(Q(w_i,2^{1-K})\big)\\
			& \leq \epsilon |D^sf|\big(Q(c_i,2^{2-K})\big) .
		\end{aligned}
	$$
	We define the collection $E_{\epsilon, K}$ (for $K\geq \max\{K_0, K_1+1\}$) as those squares $Q_i \in F_{\epsilon, K}$ such that $Q_i\cap \tilde{X}_{\epsilon, K}\neq \emptyset$. By the choice of $K_1$, we have that
	$$
		|D^sf|\Big(\bigcup_{Q_i \in F_{\epsilon, K}\setminus E_{\epsilon, K}} Q_i\Big) =  |D^s f| (S \setminus \tilde{X}_{\epsilon,K})  < \epsilon |D^sf|(Q(0,1)).
	$$
	These two estimates together with (2) are point (3) of our claim.
	
	\step{3}{Choose an appropriate $\alpha_0>0$ and for every $0<\alpha<\alpha_0$ find an appropriate $K(\epsilon, \alpha)$}{CEZS3}
	
	For every $K \geq \max \{ K_0, K_1+1\}$ we call $W_{\epsilon, K}'$ the collection of $Q_i$, the $K$-dyadic squares $Q_i$ such that
	$$
		Q_i \notin F_{\epsilon, K}  \ \text{ and } \ |D^sf|(2Q_i) > \epsilon |D^af|(Q_i).
	$$
	Denote $\tilde{W}_{\epsilon, K}' = \bigcup_{Q_i\in W_{\epsilon, K}'}Q_i$. Notice that for any $Q_i \notin F_{\epsilon, K}$ we have $2Q_i \cap \tilde{F}_{\epsilon} = \emptyset$ by the definition of $F_{\epsilon, K}$. Then, by the second estimate of point (2), for every $K$ we have
	\begin{equation}\label{Aha}
		|D^af|(\tilde{W}_{\epsilon, K}')  \leq \epsilon^{-1}\sum_{Q_i \in W_{\epsilon, K}'} |D^sf|(2Q_i) \leq 4 \epsilon|D^af|(Q(0,1)).
	\end{equation}
	The constant $4$ is the overlap multiplicity bound for $\{2Q_i\}$, i.e. a bound for $\sum_i \chi_{2Q_i}$. The remaining squares $Q(0,1)\supset Q_i \notin F_{\epsilon, K} \cup W_{\epsilon, K}'$ satisfy the estimate $|D^sf|(2Q_i) \leq \epsilon |D^af|(Q_i)$.
	
	Call $\tilde{P}_{\alpha_0}$ the set where
	$$
	\begin{aligned}
	\tilde{P}_{\alpha_0} =& \big\{w\in Q(0,1)\setminus S; |\nabla f(w)|>\alpha_0^{-1} \big\}
	\cup\big\{w\in Q(0,1); 0<|\nabla f(w)|<\alpha_0 \big\}\\
	&\cup  \big\{w\in Q(0,1); 0< \det\nabla f(w) <\alpha_0 \big\}
	\end{aligned}
	$$ Our first observation is that as $\alpha_0 \to 0$ we have
	$$
		\begin{aligned}
			\L^2\Big(\big\{w\in Q(0,1)\setminus S; |\nabla f(w)|>\alpha_0^{-1} \big\}\Big) &\to 0\\
			\L^2\Big(\big\{w\in Q(0,1); 0<|\nabla f(w)|<\alpha_0 \big\}\Big) &\to 0\\
			\L^2\Big( \big\{w\in Q(0,1); 0< \det\nabla f(w) <\alpha_0 \big\} \Big) &\to 0,
		\end{aligned}
	$$
	because as we send $\alpha_0 \to 0$ the sets (which are nested) tend to the empty set. Recall the choice of the parameter $\delta$, chosen such that $|D^af|(A) \leq \epsilon |D^af|(Q(0,1))$ for any $A$ such that $\L^2(A)< \delta$. We find an $0<\alpha_0<\epsilon$ such that $\L^2(\tilde{P}_{\alpha_0}) <\tfrac{\delta}{16}$.

	From \cite[Theorem 3.83]{AFP} we have that $\L^2$-almost every point of $Q(0,1)$ is a point of approximate differentiability of $f$. We define $\tilde{Y}_{\epsilon, \alpha, K}$ as the set of points $w\in Q(0,1)$ such that
	\begin{equation}\label{Ruler}
	\begin{aligned}
	&\qquad  \frac{1}{\L^2(Q(w, 8r))} \int_{Q(w, 8r)} |\nabla f(z) - \nabla f(w)| d\L^2(y) > 2^{-8}\epsilon \alpha^2 \quad \text{or}\\
	&\L^2\Big(\Big\{y\in Q(w, 4r): |f(z) - f(w) - \nabla f(w)(z - w) | > \alpha^4 r\Big\}\Big) \geq 2^{-9} r^2
	\end{aligned}
	\end{equation}
	for some $0< r< 2^{-K}$. By \cite[Theorem 3.83]{AFP} the $\L^2$ measure of $\tilde{Y}_{\epsilon, \alpha, K}$ tends to zero as $K\to \infty$. Therefore we find a $K_2(\alpha)$ sufficiently large such that for any $0<\alpha <\alpha_0$ and any $K\geq K_2(\alpha)$ we have $\L^2(\tilde{Y}_{\epsilon, \alpha, K})<\delta/16$. From now on we require that $K \geq K_2$. Also we require $2^{-K_2}< \epsilon\alpha_0$.
	
	\step{4}{Designate the squares $G_{\epsilon, \alpha, K}, T_{\epsilon, \alpha, K},W_{\epsilon, \alpha, K}$ and prove (4)}{CEZS4}
	
	Notice that the bound on the measure of $\tilde{P}_{\alpha_0}$ and $\tilde{Y}_{\epsilon, \alpha, K}$ implies that the union of all $K$-dyadic squares $Q_i = Q(c_i, 2^{-K})$ such that $Q(c_i, 2^{-K-2}) \subset \tilde{P}_{\alpha_0} \cup \tilde{Y}_{\epsilon, \alpha, K}$ has measure at most $2\delta$. Therefore
	\begin{equation}\label{Punch}
		|D^af|(\tilde{P}_{\alpha_0} \cup \tilde{Y}_{\epsilon, \alpha, K}) \leq 2\epsilon |D^af|(Q(0,1))
	\end{equation} 
	because of the choice of $\delta$. We call $W_{\epsilon, \alpha, K}$ the collection of those squares either 
	\begin{enumerate}
		\item[(i)]$Q_i \in W_{\epsilon, K}'$ or
		\item[(ii)]$Q(c_i, 2^{-K-2}) \subset \tilde{P}_{\alpha_0} \cup \tilde{Y}_{\epsilon, \alpha, K}$ or
		\item[(iii)] $|D^a f|(Q_i) < 2\alpha_02^{-2K+2} = 2\alpha_0\L^2(Q_i)$
	\end{enumerate}
	In case $(iii)$ we use the fact that $\L^2(Q(0,1)) = 4$ the choice of $\alpha_0<\epsilon$ to get
	$$
		|D^a f|\Big(\bigcup_{\{i : |D^a f|(Q_i) <\alpha_02^{-2K+3}\}}Q_i\Big) \leq \sum_{\{i : |D^a f|(Q_i) < 2\alpha_02^{-2K+2}\}}|D^a f|(Q_i) \leq 2\alpha_0 4 < 8\epsilon.
	$$
	This in combination with \eqref{Aha} (for case $(i)$) and \eqref{Punch} (for case $(ii)$) prove the estimate \eqref{Judy}.
	
	All the other squares $Q_i \notin F_{\epsilon, K} \cup W_{\epsilon, \alpha, K}$ have a point $w_i \in Q(c_i, 2^{-K-1}) \setminus (P_{\alpha_0} \cup Y_{\epsilon, \alpha, K})$ and therefore they satisfy the estimates
	$$
		|\nabla f(w_i)| \leq \alpha_0^{-1}
	$$
	and
	$$
		  \text{ either } \det\nabla f(w_i) \geq \alpha_0 \text{ or } \det\nabla f(w_i) = 0
	$$
	and
	$$
		\begin{aligned}
			\int_{Q(c_i, 2^{2-K})} |\nabla f(z) - \nabla f(w_i)| d\L^2(z) &\leq \int_{Q(w_i, 2^{3-K})} |\nabla f(z) - \nabla f(w_i)| d\L^2(z)\\
			& \leq \epsilon \alpha^2 2^{-2K}.
		\end{aligned}
	$$
	Further the fact that $|D^a f|(Q_i) \geq 2\alpha_0\L^2(Q_i)$ implies that $|\nabla f(w_i)| \geq 2\alpha_0 - \epsilon\alpha \alpha_0$ and since $\alpha<\epsilon<1$ we have
	$$
		\alpha_0 \leq  |\nabla f(w_i)| \leq \alpha_0^{-1}.
	$$
	Moreover, using $Q(c_i, 2^{1-K}) \subset Q(w_i, 2^{2-K})$ and $r =2^{-K}$ in \eqref{Ruler} we have that 
	$$
		\L^2\Big(\big\{z\in Q(c_i, 2^{1-K}): |f(z) - f(w_i) - \nabla f(w_i)(z - w_i) | > \alpha^4 2^{-K} \big\}\Big) < 2^{-2K - 9}.
	$$
	Then, $Z_{i, \alpha} \subset Q(c_i, 2^{1-K})$ being the set where $|f(\cdot) - f(w_i) - \nabla f(w_i)(\cdot - w_i) | \leq \alpha^4 2^{-K} $ satisfies $\L^2\big( Q_i \setminus Z_{i, \alpha} \big) < 2^{-2K-9}$. Thus we have proved point (4).
\end{proof}

	In \cite[Proposition 3.92]{AFP} the authors introduced the set
	$$
		\Theta_f = \{w\in Q(0,1); \liminf_{r\to 0} r^{-1}|Df|(B(w,r)) >0 \}.
	$$
	We adapt slightly this notion and in the following theorem we use the sets
	$$
		\Theta_f^{\beta} = \{w\in Q(0,1); \liminf_{r\to 0} r^{-1}|Df|(B(w,r)) > \tfrac{1}{10}\beta \}
	$$
	for $\beta > 0$.

	In the Theorem`\ref{KulovyBlesk}, for each pair of neighbouring vertexes $V, \tilde{V}$ of some square $Q_i$ chosen in Theorem~\ref{CEZ}, we find sets $H_{V, \tilde{V}}$ such that when we create a quadrilateral by shifting $V$ and $\tilde{V}$ to a pair of points in $H_{V, \tilde{V}}$ then the behaviour of $f_{\rceil[V, \tilde{V}]}$ corresponds to the behaviour of $f$ inside $Q_i$. In fact we can can create a good non-straight grid for $f$ by joining neighbouring shifted vertices with segments. Further we get the useful estimates \eqref{NothingEst} and \eqref{SomethingEst}. 

\begin{thm}\label{KulovyBlesk}
	Let $f\in BV(Q(0,1))$ be an NCBV map, let $\epsilon, \beta>0,$ and let $\alpha_0<\epsilon$ be the number given by Theorem~\ref{CEZ} and let $0<\alpha<\min\{\alpha_0, 2^{-9}\}$. Let $K = K(\epsilon, \alpha)\in \en$ and $\{Q_i\}_{i = 1}^{2^{2K}}$ be the $K$-dyadic squares chosen in Theorem~\ref{CEZ}. There exists a constant $C>0$ such that the following holds. Let $V$ be a vertex of a square $Q_i$ and let $\tilde{V}$ be one of its neighbouring vertices. There exists a set $H_{V, \tilde{V}} \subset Q(V, 2^{-K-2}) \times Q(\tilde{V}, 2^{-K-2})$  with $\L^4(H_{V, \tilde{V}}) \geq \tfrac{4}{5}\L^4(Q(V, 2^{-K-2}) \times Q(\tilde{V}, 2^{-K-2}))$ with the following properties:
	\begin{enumerate}
		\item for every pair $(X, \tilde{X}) \in H_{V, \tilde{V}}$ both $X$ and $\tilde{X}$ are Lebesgue points of $f$ and we assume that
		$$
			f(X) = \lim_{r\to 0} \oint_{B(X, r)}f(z) dz \quad \text{ and } \quad f(\tilde{X}) = \lim_{r\to 0} \oint_{B(\tilde{X}, r)}f(z) dz,
		$$
		\item for any pair $(X, \tilde{X}) \in H_{V, \tilde{V}}$ the map $f_{\rceil L_{X, \tilde{X}}}$ is continuous at $X$ and $\tilde{X}$, where $L_{X, \tilde{X}}$ is the line passing through $X$ and  $\tilde{X}$,
		\item for any pair $(X, \tilde{X}) \in H_{V, \tilde{V}}$ the segment $[X\tilde{X}]$ intersects the set $\Theta_f^{\beta}$ only at Lebesgue points of the function $(f^+(x)-f^{-}(x))\otimes v(x)$ with respect to $\H^1_{\rceil \Theta_f^{\beta}}$ and further $\langle X-\tilde{X}, v(x)\rangle \neq 0$ at every such point of intersection,
		\item for any pair $(X, \tilde{X}) \in H_{V, \tilde{V}}$ it holds that $|Df|([X\tilde{X}]) = 0$,
		\item for any pair $(X, \tilde{X}) \in H_{V, \tilde{V}}$ the estimate holds
		\begin{equation}\label{NothingEst}
			|D_{\tau}f_{\rceil [X\tilde{X}]}|([X\tilde{X}]) \leq C2^{K}|Df|(2Q_i) .
		\end{equation}
		Further, if $V$ and $\tilde {V}$ are both vertices of some $Q_j \in G_{\epsilon, \alpha, K}\cup T_{\epsilon, \alpha, K}$ (the collection of squares defined in Theorem~\ref{CEZ}) then
		\begin{equation}\label{SomethingEst}
			|D_{\tau}[f(\cdot) - \nabla f(w_j)(\cdot)]_{\rceil [X\tilde{X}]}| ([X\tilde{X}]) \leq C \epsilon  |D^af|\big(Q(c_j,2^{1-K})\big) 2^{K}  
		\end{equation}
		where $w_j$ is the point chosen in Theorem~\ref{CEZ}. Moreover
		\begin{equation}\label{InZ}
			X,\tilde{X} \in Z_{j, \alpha},
		\end{equation}
		where $Z_{j,\alpha}$ is the set from Theorem~\ref{CEZ}.
	\end{enumerate}
\end{thm}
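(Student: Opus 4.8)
The plan is to obtain $H_{V,\tilde V}$ by deleting from the product $Q(V,2^{-K-2})\times Q(\tilde V,2^{-K-2})$ a finite collection of ``bad'' sets, each of which is either $\L^4$--negligible (for properties (1)--(4)) or of controllably small relative measure (for property (5)). After a translation and a rotation by a multiple of $\pi/2$ we may assume that $V$ and $\tilde V$ are horizontal neighbours with $\tilde V-V=(2^{1-K},0)$; the vertical case is verbatim the same. Write $\rho:=2^{-K-2}$ and $\ell:=2^{1-K}=8\rho$, and write a generic pair as $X=(x_1,y_1)\in Q(V,\rho)$, $\tilde X=(x_2,y_2)\in Q(\tilde V,\rho)$, so that $x_2-x_1\in(\ell/2,3\ell/2)$. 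The basic tool is the change of variables $(x_1,y_1,x_2,y_2)\mapsto(x_1,x_2,m,\eta)$, where $L_{m,\eta}=\{(x,mx+\eta):x\in\er\}$ is the line through $X$ and $\tilde X$ (here $|m|<\tfrac12$); its Jacobian equals $|x_2-x_1|$, hence is comparable to $\ell$, so an $\L^4$--null set of pairs corresponds to an $\L^4$--null set in $(x_1,x_2,m,\eta)$, and a property holding for a.e.\ line of the family and a.e.\ $(x_1,x_2)$ along it holds for a.e.\ pair; moreover, by Fubini in the slope, a property holding for a.e.\ direction and a.e.\ line of that direction also holds for a.e.\ pair.

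\textbf{The null conditions (1)--(4).} For (1) we delete the $\L^4$--null set on which $X$ or $\tilde X$ is not a Lebesgue point of $f$ and take the precise representative elsewhere. For (2): by an affine change of variables and Corollary~\ref{Repeat} (cf.\ Remark~\ref{GGG}), $f_{\rceil L}\in BV$ for a.e.\ line $L$ of the family, hence $f_{\rceil L}$ is continuous off a countable subset of $L$, and Fubini shows that for a.e.\ pair $f_{\rceil L_{X,\tilde X}}$ is continuous at both $X$ and $\tilde X$. For (4): $|D^af|$ vanishes on every line (an $\L^2$--null set) and $|D^cf|$ vanishes on every line (an $\H^1$--finite set), while $J_f$ is $\H^1$--rectifiable so a generic line meets it in an $\H^1$--null set and $|D^jf|$ vanishes on a.e.\ line; the exceptional pairs form an $\L^4$--null set. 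For (3): the weak $(1,1)$ estimate for $|Df|$ gives $\H^1(\Theta_f^\beta)<\infty$, whence $\L^2(\Theta_f^\beta)=|D^af|(\Theta_f^\beta)=|D^cf|(\Theta_f^\beta)=0$ and $\H^1(\Theta_f^\beta\setminus J_f)=0$ by Theorem~\ref{FV}; thus $(f^+-f^-)\otimes v$ is bounded and $\H^1_{\rceil\Theta_f^\beta}$--integrable, and by the Besicovitch differentiation theorem the set $E$ of its non--Lebesgue points in $\Theta_f^\beta$ is $\H^1$--null. Since Lipschitz projections send $\H^1$--null sets to $\L^1$--null sets, a.e.\ line of the family misses $E$, which settles the first clause of (3) up to an $\L^4$--null set. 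For the transversality clause: writing $J_f=N\cup\bigcup_k\gamma_k([0,1])$ with $\H^1(N)=0$ and $\gamma_k$ Lipschitz, for each $k$ the set $\{(t,\theta):\gamma_k'(t)\parallel(\cos\theta,\sin\theta)\}$ is a graph over $t$, hence $\L^2$--null, so for a.e.\ $\theta$ the set $E_\theta:=\{x\in\Theta_f^\beta\cap J_f:\,v(x)\perp(\cos\theta,\sin\theta)\}$ is $\H^1$--null; for such $\theta$ a.e.\ line of direction $\theta$ misses $E_\theta$, and Fubini in $\theta$ gives that for a.e.\ pair $[X\tilde X]$ meets $\Theta_f^\beta$ only where $\langle X-\tilde X,v(x)\rangle\neq0$.

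\textbf{The quantitative conditions (5).} For \eqref{NothingEst} we average $(X,\tilde X)\mapsto|D_{\tau}f_{\rceil [X\tilde X]}|([X\tilde X])$, which is measurable and, since $[X\tilde X]\subset 2Q_i$, is dominated by $|D_{\tau}f_{\rceil L_{m,\eta}}|(L_{m,\eta}\cap 2Q_i)$. Applying the change of variables, then the $BV$ slicing theorem~\cite[Theorem~3.107]{AFP} together with Lemma~\ref{Stupido}, and estimating $|x_2-x_1|\le 2\ell$ and the $(x_1,x_2)$--range by $4\rho^2$, we get
\begin{equation*}
\int_{Q(V,\rho)\times Q(\tilde V,\rho)}|D_{\tau}f_{\rceil [X\tilde X]}|([X\tilde X])\,d\L^4\ \le\ C\ell\rho^2\,|Df|(2Q_i).
\end{equation*}
Dividing by $\L^4(Q(V,\rho)\times Q(\tilde V,\rho))=16\rho^4$, the average is at most $C\,(\ell/\rho^2)\,|Df|(2Q_i)=C\,2^{K+5}|Df|(2Q_i)$, so by Chebyshev $|D_{\tau}f_{\rceil [X\tilde X]}|([X\tilde X])\le C\,2^{K}|Df|(2Q_i)$ off a set of relative measure $\le\tfrac1{100}$, with $C$ absolute. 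For \eqref{SomethingEst} we run the same averaging for $g:=f-\nabla f(w_j)(\cdot)$, noting $Dg=Df-\nabla f(w_j)\L^2$; by \eqref{Scorpions} and the bound $|D^sf|(2Q_j)\le\epsilon|D^af|(Q_j)$ from Theorem~\ref{CEZ},
\begin{equation*}
|Dg|\big(Q(c_j,2^{1-K})\big)\ \le\ \int_{Q(c_j,2^{2-K})}|\nabla f-\nabla f(w_j)|\,d\L^2+|D^sf|(2Q_j)\ \le\ \epsilon\alpha^2 2^{-2K}+\epsilon\,|D^af|\big(Q(c_j,2^{1-K})\big),
\end{equation*}
and since $|\nabla f(w_j)|\ge\alpha_0$ (again \eqref{Scorpions}) forces $|D^af|(Q(c_j,2^{1-K}))\ge 2\alpha_0 2^{-2K}$ while $\alpha^2/\alpha_0\le1$, the right side is $\le C\epsilon\,|D^af|(Q(c_j,2^{1-K}))$; the averaging/Chebyshev argument then yields \eqref{SomethingEst} off a set of relative measure $\le\tfrac1{100}$. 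Finally \eqref{InZ} costs only the deletion of pairs with $X\notin Z_{j,\alpha}$ or $\tilde X\notin Z_{j,\alpha}$: since $Q(V,\rho)\subset Q(c_j,2^{1-K})$ and $\L^2(Q(c_j,2^{1-K})\setminus Z_{j,\alpha})\le 2^{-2K-9}$ while $\L^2(Q(V,\rho))=2^{-2K-2}$, this has relative measure $\le 2\cdot 2^{-7}$. At most two squares $Q_j$ have both $V$ and $\tilde V$ as vertices, so the clauses \eqref{SomethingEst}--\eqref{InZ} are imposed at most twice.

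Putting it together, we let $H_{V,\tilde V}$ be $Q(V,2^{-K-2})\times Q(\tilde V,2^{-K-2})$ with all the exceptional sets above removed; the sets attached to (1)--(4) are $\L^4$--null, and those attached to \eqref{NothingEst} and to the (at most two) instances of \eqref{SomethingEst}--\eqref{InZ} have total relative measure $\le\tfrac1{100}+2\big(\tfrac1{100}+2^{-6}\big)<\tfrac15$, so $\L^4(H_{V,\tilde V})\ge\tfrac45\L^4\big(Q(V,2^{-K-2})\times Q(\tilde V,2^{-K-2})\big)$, and every pair in $H_{V,\tilde V}$ satisfies (1)--(5) by construction. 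I expect the main obstacles to be the integral--geometric/Fubini arguments underlying (3) --- in particular the transversality clause, which hinges on slicing the rectifiable jump set by its Gauss map --- and keeping track of the absolute constants in the averaging estimates behind (5).
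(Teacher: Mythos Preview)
Your proof is correct and follows the same overall strategy as the paper: discard $\L^4$--null exceptional sets to secure (1)--(4), then run an averaging/Chebyshev argument for the quantitative bounds in (5). The one substantive difference is the change of variables. The paper introduces the map
\[
\Psi(X,\tilde X)=\Big(\tfrac{X-\tilde X}{|X-\tilde X|},\ \langle X,\tfrac{X-\tilde X}{|X-\tilde X|}\rangle,\ \langle \tilde X,\tfrac{X-\tilde X}{|X-\tilde X|}\rangle,\ \langle X,(\tfrac{X-\tilde X}{|X-\tilde X|})^{\perp}\rangle\Big)
\]
into $S^1\times\er^3$, checks it is $2^K$--bi-Lipschitz with $\J_4\Psi\approx 2^K$, and invokes the area formula to transfer null sets and integrals; this is coordinate-free but costs some Hausdorff-measure machinery. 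Your slope--intercept change $(x_1,y_1,x_2,y_2)\mapsto(x_1,x_2,m,\eta)$ after reducing to horizontal neighbours is more elementary and delivers exactly the same $C2^{-3K}|Df|(2Q_i)$ bound on the integral and hence the same $C2^{K}$ average. For the transversality clause of (3) the paper observes that the set of directions $u$ with $\H^1(\{w\in\Theta_f^\beta:v(w)\perp u\})>0$ is at most countable, while you go through the Lipschitz decomposition of $J_f$ and Fubini in the slope; both routes work. One minor remark: the inequality $|D^sf|(2Q_j)\le\epsilon|D^af|(Q_j)$ you invoke is proved inside Theorem~\ref{CEZ} (via the definition of $W'_{\epsilon,K}$) though its \emph{statement} only records the version with $Q_j$ in place of $2Q_j$; the paper's own proof here uses the $2Q_j$ version as well (their \eqref{SleepyJoe}), so this is an inconsistency in the source rather than a gap in your argument.
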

\begin{proof}
	To satisfy point (1) it suffices to consider a `good' representative of $f$ and eliminate the set of non-Lebesgue points from $Q(V,2^{-K-2})$ and $Q(\tilde{V},2^{-K-2})$, which is a set of zero measure. Call $N_1$ the set of all $(X,\tilde{X}) \in Q({V},2^{-K-2})\times Q(\tilde{V},2^{-K-2})$ such that either $X$ is not a Lebesgue point of $f$ or $\tilde{X}$ is not a Lebesgue point of $f$. Clearly $\L^4(N_1) = 0$.
	
	During the course of this proof we use the mapping
	$$
	\Psi: (X, \tilde{X}) \to \Bigg(\frac{X- \tilde{X}}{|X- \tilde{X}|}, \,
	 \langle X, \tfrac{X- \tilde{X}}{|X- \tilde{X}|}\rangle, \,
	   \langle \tilde{X}, \tfrac{X- \tilde{X}}{|X- \tilde{X}|} \rangle,  \,
	  \bigg\langle X, \bigg(\begin{matrix}
	  0& \  -1\\
	  1& \  0\\
	  \end{matrix}\bigg)\tfrac{X- \tilde{X}}{|X- \tilde{X}|}\bigg\rangle \Bigg).
	$$
	
	In the image of $\Psi$ we have the measure
	$$\mu := \H^1_{\rceil \{|X|=1\}}\times \L^3.$$
	The first space, $\H^1_{\rceil \{|X|=1\}}$, is bi-Lipschitz equivalent with $\L^1_{\rceil (0,2\pi)}$ and so the measure $\mu$ is locally bi-Lipschitz equivalent with $\L^4$ on $\er^4$ which is $\H^4$ on $\er^4$. In this sense $\mu$ is equivalent with $\H^4$ on $\{(x_1,x_2,x_3,x_4,x_5)\in \er^5: x_1^2+x_2^2 = 1  \}$. We have the validity of the area formula (for example see \cite[Theorem 3.8]{EG}) for $\Psi$, i.e.
	$$
		\int_{E}\J_4\Psi(X, \tilde{X}) \ d\L^4(X, \tilde{X}) = \int_{\er^5} \H^0\big(\Psi^{-1}(Z)\big) \ d\H^4(Z),
	$$
	where $\J_4\Psi(X, \tilde{X}) = \sqrt{\sum_{\lambda \in \Lambda(5,4)}[\det \nabla\Psi_{\lambda}(X, \tilde{X})]^2}$ is calculated by the well-known Cauchy-Binet formula. In fact it is easy to see that $\Psi$ is injective and
	$$
		\int_{E}\J_4\Psi(X, \tilde{X}) \ d\L^4(X, \tilde{X})  = \H^4(\Psi(E)) \approx \mu(\Psi(E)).
	$$
	When the arguments satisfy $|X - \tilde{X}| \approx 2^{-K}$ then $\Psi$ is in fact $2^{K}$ bi-Lipschitz. It is not difficult to calculate under these circumstances that $\J_4 \Psi \approx 2^{K}$. In particular
	\begin{equation}\label{NCond}
	\mu(\Psi(N)) = 0 \text{ exactly when } \L^4(N) = 0
	\end{equation}
	and in the following we use this property repeatedly.
	
	From \cite[Theorem 3.107]{AFP} we have that for any choice of $|u|=1$, $\L^2$ almost every choice of $X \in Q(V,2^{-K-2}),$ and $\H^1$ almost every choice of $\tilde{X} \in Q(\tilde{V},2^{-K-2})$ with $\tilde{X} \in X + \er u$ there exists a partial derivative in the direction $u$ at $X$ and at $\tilde{X}$ and therefore $f_{\rceil [X\tilde{X}]}$ is continuous at $X$ (and $\tilde{X}$). Call $N_2$ the set of all $(X,\tilde{X}) \in Q({V},2^{-K-2})\times Q(\tilde{V},2^{-K-2})$ such that either there is no partial derivative in the direction $\frac{X-\tilde{X}}{|X-\tilde{X}|}$ at $X$ or there is no partial derivative in the direction $\frac{X-\tilde{X}}{|X-\tilde{X}|}$ at $\tilde{X}$. We have from the above that $\mu(\Psi(N_2)) = 0$. Then \eqref{NCond} implies that $\L^4(N_2) = 0$.
	
	As a step towards proving (3) we show that the pairs of $X, \tilde{X}$ whose corresponding segments intersect $\Theta_{f}^{\beta}$ at non-Lebesgue points of the map in (3) has zero $\L^4$ measure. Although this is a standard result of structure theory we give some details here. We denote the so-called jump set of $f$ as $J_f$. The set $\Theta_f^0 \supset \Theta_f^{\beta}$ is a superset of $J_f$ (see \cite[Proposition 3.92]{AFP}) and $\H^1(\Theta_f^0 \setminus J_f) = 0$ implying that $D^jf = (f^+-f^{-})\otimes u \H^1_{\rceil \Theta_f^0}$ (see \cite[Lemma 3.76, Theorem 3.77]{AFP}). Then, since $|f^+(x)-f^{-}(x)|> \tfrac{1}{10}\beta$ for $\H^1$ almost every $x \in\Theta_f^{\beta}$, we have that $\Theta_f^{\beta}$ is both 1-rectifiable and $\H^1(\Theta_f^{\beta})< \infty$ (see Theorem~\ref{FV}). Also we have that $f^+-f^{-} \in L^1(\Theta_f^\beta, \H^1)$. Therefore $\H^1$ almost every point of $\Theta_{f}^{\beta}$ is a Lebesgue point of $f^+-f^{-}$ with respect to $\H^1_{\rceil \Theta_{f}^{0}}$. That is, calling $T_1$ the set of non-Lebesgue points of $f^+-f^{-}$ with respect to $\H^1_{ \rceil\Theta_{f}^{0}}$, we have $\H^1(T_1) = 0$. This in turn implies that for any direction $|u|=1$ and its corresponding projection $\pi_u(\cdot ) := \cdot  - u \langle \cdot, u\rangle$ we have that $\H^1(\pi_u(T_1)) = 0$. Let us call $N_3$ the set of $(X, \tilde{X})$ such that $[X\tilde{X}]$ intersects $T_1$. Since $\H^1\Big(\pi_{\tfrac{X- \tilde{X}}{|X- \tilde{X}|}}(T_1)\Big) = 0$ for every possible value of $\frac{X- \tilde{X}}{|X- \tilde{X}|}$, the Fubini theorem gives that $\mu(\Psi(N_3)) = 0$ and, by \eqref{NCond}, $\L^4(N_3) = 0$.
	
	Now we show that $\L^4$ almost every choice of $X$ and $\tilde{X}$ does not meet $\Theta_f^{\beta}$ tangentially, which is a claim of point (3). The set of directions $|u| =1$ such that
	$$
		\H^1\big(\{w \in \Theta_{f}^{\beta} : v(w) \bot u \}\big) > 0
	$$
	is at most countable and so has zero measure since $\H^1(\Theta^\beta_f)<\infty.$ We call this set of directions $T_2$. Therefore, for almost every direction $|u|=1$ we have that
	$$
		\H^1\Big(\pi_u\big(Q(V,2^{-K-2})\big) \cap \pi_u\big(\{w\in \Theta_{f}^{\beta} : v(w) \bot u \}\big)\Big) = 0
	$$
	and the same estimate holds after replacing $V$ with $\tilde{V}$. Choose any direction $|u|=1$ with $u\notin T_2$ such that
	$\H^1\big(\pi_u(Q(V,2^{-K-2})) \cap \pi_u(Q(\tilde{V},2^{-K-2}))\big)>0$. We have that the set of points $\hat{X}$ such that $\hat{X} \in \pi_u\big(Q(V,2^{-K-2})\big) \cap \pi_u\big(Q(\tilde{V},2^{-K-2})\big)$ and $\hat{X} \in \pi_u(\{w \in \Theta_{f}^{\beta} : v(w) \bot u\})$ has $\H^1$ measure equal zero. This holds for any vector $|u| =1$, $u\notin T_2$. Call $N_4$ the set of pairs $(X,\tilde{X})$ such that $\frac{X- \tilde{X}}{|X- \tilde{X}|} \in T_2$ or 
	 $$
	 	\pi_{\tfrac{X- \tilde{X}}{|X- \tilde{X}|}}(X) \in \pi_{\tfrac{X- \tilde{X}}{|X- \tilde{X}|}}\Big(\{x \in \Theta_{f}^{\beta} : v(x) \bot \tfrac{X- \tilde{X}}{|X- \tilde{X}|} \}\Big).
	 $$
	 The Fubini theorem and \eqref{NCond} guarantee that $\L^4(N_4) = 0$.
	 
	 Let us fix a direction $|u| = 1$ then almost every line $L$ parallel to $u$ has $|Df|(L\cap Q(0,1)) = 0$. Therefore using the bi-Lipschitz quality of $\Psi$ we have that the set of pairs $(X,\tilde{X})$ (call it $N_5$) such that  $|Df|([X\tilde{X}]) >0$ satisfies $\L^4(N_5) = 0$.
	 
	 Let $|u| =1$ be any vector such that $P_{u, V,\tilde{V}, K} := \pi_u(Q(V,2^{-K-2})) \cap \pi_u(Q(\tilde{V},2^{-K-2}))\neq \emptyset$. For any $p \in P_{u, V,\tilde{V}, K}$ let us also denote $M_{u,V,K, p} := \pi^{-1}_u(p)\cap Q(V,2^{-K-2})$ and $M_{u,\tilde{V} K,p} := \pi^{-1}_u(p)\cap Q(\tilde{V},2^{-K-2})$. Then, (because each $[X,\tilde{X}]\subset 2Q_i$), we have by the Fubini theorem, \cite[Theorem 3.107]{AFP} and Lemma~\ref{Stupido}
	 $$
	 	\begin{aligned}
	 		\int_{P_{u, V,\tilde{V}, K}} \int_{M_{u,{V},K,U} \times M_{u,\tilde{V},K,U}}& |D_{\tau}f_{\rceil[X\tilde{X}]}| ([X\tilde{X}]) \, d\H^1\times\H^1(X,\tilde{X}) \,d\H^1(U) \\
	 		&\leq C2^{-2K}|\langle Df, u\rangle|(2Q_i) \\
	 		&\leq C2^{-2K}|Df|(2Q_i).
	 	\end{aligned}
	 $$
	 Integrating this with respect to $u$ and then using the change of variables formula with $\Psi$ (note that $\J_4 \Psi \approx 2^K$) we get
	 $$
	 	\int_{Q(V,2^{-K-2}) \times Q(\tilde{V},2^{-K-2})} |D_{\tau}f_{\rceil[X\tilde{X}]}|([X\tilde{X}]) d\L^4(X,\tilde{X})  \leq C2^{-3K}|Df|(2Q_i).
	 $$
	 Call $N := N_1\cup N_2\cup N_3\cup N_4\cup N_5$, then $\L^4(N) = 0$. Using the Chebyshev inequality we find a constant $\lambda >0$ (dependent only on $f$) and a set $H_{V,\tilde{V}} \subset Q(V,2^{-K-2}) \times Q(\tilde{V},2^{-K-2}) \setminus N$ such that $\L^4(H_{V,\tilde{V}}) \geq \tfrac{4}{5} \L^4(Q(V,2^{-K-2}) \times Q(\tilde{V},2^{-K-2}))$ and
	 $$
	 	|D_{\tau}f_{\rceil[X\tilde{X}]}|( [X\tilde{X}])  \leq C\lambda 2^{K}|Df|(2Q_i)
	 $$
	 for any $(X,\tilde{X}) \in H_{V,\tilde{V}}$, this is \eqref{NothingEst}.
	 
	 Now let us prove \eqref{SomethingEst} and \eqref{InZ}. We assume that $Q_j=Q(c_j, 2^{-K}) \in G_{\epsilon, \alpha, K}\cup T_{\epsilon, \alpha, K}$. Specifically by Theorem \ref{CEZ} there exists $w_j\in Q(c_j, 2^{-K-2})$
	 \begin{equation}\label{Senile}
	 	\int_{Q(c_j, 2^{2-K})} |\nabla f(z) - \nabla f(w_j)| d\L^2(z) \leq \epsilon \alpha^2 2^{-2K},
	 \end{equation}
	 and
	 \begin{equation}\label{SleepyJoe}
	 |D^sf|(2Q_j) \leq \epsilon |D^a  f|(Q_j)
	 \end{equation}
	 and finally that there exists a set $Z_{j, \alpha} \subset Q(c_j, 2^{1-K})$ with $\L^2\big(Q(c_j, 2^{1-K}) \setminus Z_{j,\alpha}\big)\leq 2^{-2K-9}$ such that
	 $$
	 		\|f(\cdot) - f(w_j) - \nabla f(w_j)(\cdot - w_j)\|_{L^\infty(Z_{j,\alpha})} < \alpha^4 2^{-K}
	 $$
	 and
	 $$
	 \alpha_0\leq |\nabla f(w_j)|\leq \alpha^{-1}_0, \ \alpha_0< \det \nabla f(w_j) .
	 $$
	 We integrate over all lines parallel to $|u| =1$ a vector such that $P_{u, V,\tilde{V}, K} := \pi_u(Q(V,2^{-K-2})) \cap \pi_u(Q(\tilde{V},2^{-K-2}))\neq \emptyset$ and over $M_{u,V,K,U} = \pi^{-1}_u(U)\cap Q(V,2^{-K-2})$ and $M_{u,\tilde{V},K,U} = \pi^{-1}_u(U)\cap Q(\tilde{V},2^{-K-2})$. First we decompose into the singular and absolutely continuous part
	 $$
		 \begin{aligned}
	 		&\int_{P_{u, V,\tilde{V}, K}}\int_{M_{u,{V},K,U}}\int_{M_{u,\tilde{V},K,U}}|D_{\tau}[f(\cdot) - \nabla f(w_j)(\cdot)]_{\rceil [X\tilde{X}]}|([X\tilde{X}]) \, d\tilde{X} \, dX \,d\H^1(U) \\
			 &\quad \leq  \int_{P_{u, V,\tilde{V}, K}}\int_{M_{u,{V},K,U}}\int_{M_{u,\tilde{V},K,U}}  \int_{[X\tilde{X}]}|\nabla f(z) - \nabla f(w_j)|\,d\H^1(z) \, d\tilde{X} \, dX \,d\H^1(U)\\
	 		& \qquad  +\int_{P_{u, V,\tilde{V}, K}}\int_{M_{u,{V},K,U}}\int_{M_{u,\tilde{V},K,U}} \langle |D^sf|, u\rangle ([X\tilde{X}]) d\tilde{X} \, dX \,d\H^1(U).\\
	 	\end{aligned}
	 $$
	 Now we use Fubini and the fact that $\{X\in Q(V,2^{-K-2}) : \pi_u(X)\in P_{u, V,\tilde{V}, K} \}, \{\tilde{X}\in Q(\tilde{V},2^{-K-2}) : \pi_u(\tilde{X})\in P_{u, V,\tilde{V}, K} \}\subset 2Q_i$. Also we use the estimate that every slice of $Q(V,2^{-K-2})$ has diameter bounded by $2^{-K}$ to get
	 $$
	 	\begin{aligned}
	 		&\int_{P_{u, V,\tilde{V}, K}}\int_{M_{u,{V},K,U}}\int_{M_{u,\tilde{V},K,U}}  \int_{[X\tilde{X}]}|\nabla f(z) - \nabla f(w_j)|\,d\H^1(z) \, d\tilde{X} \, dX \,d\H^1(U)\\
	 		& \quad  +\int_{P_{u, V,\tilde{V}, K}}\int_{M_{u,{V},K,U}}\int_{M_{u,\tilde{V},K,U}} \langle |D^sf|, u\rangle ([X\tilde{X}]) d\tilde{X} \, dX \,d\H^1(U)\\
	 		&\qquad \leq 2^{-2K} \int_{2Q_i}|\nabla f - \nabla f(w_i)| + 2^{-2K} |D^sf|(2Q_i)
	 	\end{aligned}
	 $$
	 We use \eqref{Senile} and \eqref{SleepyJoe} and then $\alpha< \alpha_0\leq |\nabla f(w_i)| $ to get
	 $$
	 	\begin{aligned}
	 		2^{-2K} \int_{2Q_i}|\nabla f - \nabla f(w_i)| +& 2^{-2K} |D^sf|(2Q_i)\\
	 		&\leq   \epsilon \alpha^2 2^{-4K} + C2^{-2K} \epsilon |D^a f|(Q_i)\\
			&\leq  C\epsilon \alpha^2 2^{-4K} + C2^{-4K} \epsilon |\nabla f(w_i)| + C\epsilon \alpha^2 2^{-4K}\\
			&\leq C2^{-4K} \epsilon |\nabla f(w_i)|.\\
		\end{aligned}
	$$
	Integrating the above equations over all $|u|=1$ and using the change of variables formula with $\Psi$ (recall that $\J_4 \Psi \approx 2^K$) and denoting $A = (Q(V,2^{-K-2})\cap Z_{j, \alpha}) \times (Q(\tilde{V},2^{-K-2})\cap Z_{j,\alpha})$ we get
	$$
		\int_{A} |D_{\tau}[f(\cdot) - \nabla f(w_j)(\cdot)]_{\rceil[X\tilde{X}]}|( [X\tilde{X}]) d\L^4(X,\tilde{X}) \leq C\epsilon 2^{-3K}|D^af|(2Q_i).
	$$
	Because $\L^2\big(Q(c_j, 2^{1-K}) \setminus Z_{j,\alpha}\big)\leq  2^{-2K-9}$, we have that
	$$
	\begin{aligned}
	\L^2(Q(V, 2^{-K-2}) \cap Z_{j,\alpha})
	&\geq \L^2(Q(V, 2^{-K-2})) - \L^2\big(Q(c_j, 2^{1-K}) \setminus Z_{j,\alpha}\big)\\
	& \geq 2^{-2K-4} - 2^{-2K-9} = \tfrac{31}{32}2^{-2K-4}\\
	&= \tfrac{31}{32} \L^2\big(Q(V, 2^{-K-2})\big).
	\end{aligned}
	$$
	This implies that
	$$
		\L^4\Big(\big(Q(V, 2^{-K-2}) \cap Z_{j,\alpha}\big) \times \big(Q(\tilde{V}, 2^{-K-2}) \cap Z_{j,\alpha}\big)  \Big) \geq \tfrac{9}{10}\L^4\big(Q(V, 2^{-K-2}) \times Q(\tilde{V}, 2^{-K-2})\big) .
	$$
	Again we have, up to increasing the value of $\lambda$, that
	$$
		|D_{\tau}[f(\cdot) - \nabla f(w_j)(\cdot)]_{\rceil[X\tilde{X}]}|( [X\tilde{X}]) \leq C\lambda\epsilon 2^{K}|D^af|(2Q_i)
	$$
	for all $(X,\tilde{X}) \in H_{V,\tilde{V}} \subset (Z_{j, \alpha}\times Z_{j, \alpha})$ while simultaneously $\L^4(H_{V, \tilde{V}}) \geq \tfrac{4}{5}\L^4(Q(V, 2^{-K-2}) \times Q(\tilde{V}, 2^{-K-2}))$, thus proving \eqref{SomethingEst} and concluding our proof.
\end{proof}

	In the following proposition we use the following notation. Let $\P\subset \er^2$ be an injective continuous image of a circle with $\H^1(\P)<\infty$. Denote the closure of the bounded component of $\er^2 \setminus \P$ by $\tilde{\P}$. Let $p_1,p_2 \in  \tilde{\P}$, we define
	\begin{equation}
		d_{\mathcal P}(p_1,p_2)=\inf\Big\{l(\gamma)\colon \gamma \textrm{ is a path joining } p_1,\, p_2; \gamma\subset  \overline{\tilde{\P}}\Big\},
	\end{equation}
	where by a path joining $p_1$ and $p_2$ in  $\tilde{\P}$ we mean a continuous curve $\gamma:[0,1]\to \R^2$ such that $\gamma(0)=p_1$ and $\gamma(1)=p_2$.
	
	The following proposition is the utilization of the $NCBV^+$ condition on a good non-straight grid $\Gamma$ chosen using the previous theorem. It gives us a map $\phi$ defined on $\Gamma$. The utility of $\phi$ is that we are able to find a homeomorphic extension of $\phi$ with estimates that allow us to prove area-strict convergence.

	\begin{prop}\label{DefiningPhi}
	For every $\epsilon >0$ let $\alpha_0>0$ as in Theorem \ref{CEZ}. For every $0<\alpha<\alpha_0$ there exists a good non-straight grid for $f$ called $\Gamma$ and a function $\phi$ defined on $\Gamma$ such that
		\begin{enumerate}
			\item $\Gamma$ is admissible for $f$,
			\item every component of $Q(0,1)\setminus \Gamma$ is a convex quadrilateral and contains exactly one point $c_i$, where $c_i$ is the centre of a square $Q_i$ the set $\{Q_i\}_{i = 1}^{2^{2K}}$ of $K$-dyadic squares in Theorem~\ref{CEZ} and Theorem~\ref{KulovyBlesk} (thanks to this we call the quadrilaterals $\Q_i$ the components of $Q(0,1)\setminus \Gamma$ containing $c_i$),
			\item if $Q_i \in E_{\epsilon, K}$ (the set from Theorem~\ref{CEZ}) and finding $u_i,v_i$ such that that the function $g$ of Theorem~\ref{CEZ} satisfies $g(w_i) = u_i\otimes v_i$ we have
			\begin{equation}\label{SingEst1}
					\int_{\pi_{v_i}(\Q_i)}d_{\phi(\partial \Q_i)}(\phi(X_*),\phi(X^*))\,d\H^1 \leq (1+\epsilon)|Df|(\Q_i) + C\epsilon 2^{-2K}
			\end{equation}
			and
			\begin{equation}\label{SingEst2}
					\int_{\pi_{v_i^{\bot}}(\Q_i)}d_{\phi(\partial \Q_i)}(\phi(Z^*),\phi(Z_*)) \, d\H^1(Z) \leq C\epsilon |D^sf|(4Q_i \cap S ) +C2^{-2K}\epsilon.
			\end{equation}
			where $\pi_{v_i}(x) = v_i^{\bot}\langle x,v_i^{\bot}\rangle$ and $\pi_{v_i^{\bot}}(x) = v_i\langle x,v_i\rangle$ and where for each $X\in \pi_{v_i}(\Q_i)$ the points $X_*, X^*$ are the two distinct points in $\partial \Q_i$ such that $\pi_{v_i(X_*)} =  \pi_{v_i(X^*)} =  X$ and similarly $\pi_{v_i^{\bot}}(Z_*) = \pi_{v_i^{\bot}}(Z^*) = Z \in \pi_{v_i^{\bot}}(\Q_i)$,
			\item for all $i=1,\dots 2^{2K}$ it holds that
			$$
				|D_{\tau}\phi|(\partial \Q_i) \leq C2^{K}|Df|(2Q_i),
			$$
			\item if $Q_i \in G_{\epsilon, \alpha, K}$ then $\phi(x,y) = f(x,y)$ at each $(x,y)$ vertex of $\Q_i$ and $\phi$ is linear on each side of $\partial \Q_i$
			\item if $Q_i \in T_{\epsilon, \alpha, K}$ then
			\begin{equation}\label{TropicalWood}
				\int_{\partial \Q_i} |\partial_{\tau}\phi - \nabla f(w_i)\tau| \,d\H^1 \leq C\epsilon 2^{K} |Df|(2 Q_i).
			\end{equation}
		\end{enumerate}
	\end{prop}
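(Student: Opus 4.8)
The plan is to build $\Gamma$ as a controlled perturbation of the $K$-dyadic grid, to transport the $NCBV^+$ approximation onto it, and then to read off the six properties; the real work is concentrated in property (3).

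\emph{Step 1 (the data).} Fix $\epsilon$ and $\alpha$, shrinking $\alpha$ if necessary so that $\alpha<\min\{\alpha_0,2^{-9}\}$ and $\alpha^4\ll\alpha_0^2$. First apply Theorem~\ref{CEZ} to obtain $K=K(\epsilon,\alpha)\in\en$, the squares $\{Q_i\}_{i=1}^{2^{2K}}$ with centres $c_i$, the families $F_{\epsilon,K},E_{\epsilon,K},G_{\epsilon,\alpha,K},T_{\epsilon,\alpha,K},W_{\epsilon,\alpha,K}$, the points $w_i$, the rank-one directions $u_i,v_i$ on the $E$-squares, the affine data $\nabla f(w_i)$ on the $G\cup T$-squares, and the sets $Z_{i,\alpha}$. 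Then apply Theorem~\ref{KulovyBlesk} with a small auxiliary parameter $\beta>0$ to obtain, for every pair of neighbouring vertices $V,\tilde V$ of every $Q_i$, the set $H_{V,\tilde V}\subset Q(V,2^{-K-2})\times Q(\tilde V,2^{-K-2})$ of admissible shifted endpoints, with $\L^4(H_{V,\tilde V})\ge\tfrac45\L^4$ of the box and with the estimates \eqref{NothingEst}, \eqref{SomethingEst}, \eqref{InZ} and the continuity/transversality statements (1)--(4) of that theorem.

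\emph{Step 2 (the grid $\Gamma$, giving (1) and (2)).} Keep the boundary vertices of the dyadic grid fixed, so that $\Gamma$ meets $\partial Q(0,1)$ along $\partial Q(0,1)$, where $f=\id$. Choose interior shifts $X_V\in Q(V,2^{-K-2})$ by a Fubini/density selection: processing the vertices in an order in which each new vertex has few already-fixed neighbours, one uses that every $H_{V,\tilde V}$ occupies all but a small fraction of its box to keep, at each step, the set of admissible shifts both compatible with the already-fixed neighbours and, on a slightly reduced subset, compatible with the not-yet-fixed ones; this yields shifts with $(X_V,X_{\tilde V})\in H_{V,\tilde V}$ for every grid edge. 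Joining adjacent shifted vertices by segments gives $\Gamma$. Since each shift has size $\le 2^{-K-2}\sqrt2\ll 2^{-K}$, every component $\Q_i$ of $Q(0,1)\setminus\Gamma$ is a convex quadrilateral that still contains the corresponding centre $c_i$; this is (2). Properties (2), (3), (4) of Theorem~\ref{KulovyBlesk} give that $f_{\rceil[X_V X_{\tilde V}]}$ is continuous at the two endpoints, that $|Df|([X_V X_{\tilde V}])=0$, and that every crossing of a side with $\Theta_f^{\beta}$ is a Lebesgue point of $(f^+-f^-)\otimes v$ at which $\langle X_V-X_{\tilde V},v\rangle\neq0$; taking $\beta$ small and discarding the $\H^1$-negligible crossings in the usual way, $\Gamma$ is admissible for $f$ and is a good non-straight grid, which is (1).

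\emph{Step 3 (the map $\phi$, giving (4), (5), (6)).} Apply the $NCBV^+$ condition (Definition~\ref{DefNCBVPlus}) to $\Gamma$ with a parameter $\sigma>0$ to be fixed last (much smaller than $\epsilon\,2^{-10K}$ and than the $\alpha_0$-scales below), obtaining a continuous injective $H_\sigma:\Gamma\to\er^2$ with $\|H_\sigma-h\|_{\infty,\Gamma}<\sigma$, $h$ the geometric representative of $f$ on $\Gamma$. Define $\phi$ by: $\phi(X_V)=f(X_V)$ at every vertex (the Lebesgue value; $f$ is continuous there so $h(X_V)=f(X_V)$ and this moves $H_\sigma$ by $<\sigma$); on each side $S$ bounding some $Q_j\in G_{\epsilon,\alpha,K}$, let $\phi_{\rceil S}$ be the affine parametrisation of $[f(X)f(\tilde X)]$ (this is (5); by \eqref{InZ} both endpoints lie in $Z_{j,\alpha}$, so by \eqref{Scorpions} $\phi_{\rceil S}$ is $\alpha^4 2^{-K}$-close to the nonsingular affine map $x\mapsto f(w_j)+\nabla f(w_j)(x-w_j)$, whence $\phi_{\rceil\partial\Q_j}$ is injective); on every remaining side let $\phi_{\rceil S}$ be a sufficiently fine injective piecewise-linear (inscribed-polygon) approximation of $H_{\sigma\rceil S}$ that agrees with $f$ at the endpoints. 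Then $\phi$ is piecewise linear, injective on each $\partial\Q_i$, and uniformly close to $h$. Property (4) follows from $\H^1(\phi(\partial\Q_i))\le\H^1(h(\partial\Q_i))+o(1)=|D_{\tau}f_{\rceil\partial\Q_i}|(\partial\Q_i)+o(1)$ and four uses of \eqref{NothingEst}. For (6): adjacent squares satisfy $|\nabla f(w)-\nabla f(w')|\lesssim\epsilon\alpha^2$ by \eqref{Scorpions}, so no $G$-square neighbours a $T$-square, hence every side of a $T$-square $\Q_i$ carries an inscribed polygon of $H_\sigma$; since subtracting the linear map $x\mapsto\nabla f(w_i)x$ commutes with inscribing, $\int_{\partial\Q_i}|\partial_{\tau}\phi-\nabla f(w_i)\tau|\,d\H^1=\H^1\big((\phi-\nabla f(w_i)\cdot)(\partial\Q_i)\big)\le|D_{\tau}[f-\nabla f(w_i)\cdot]_{\rceil\partial\Q_i}|(\partial\Q_i)+o(1)\le C\epsilon2^{K}|Df|(2Q_i)$ by \eqref{SomethingEst}, which is \eqref{TropicalWood}.

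\emph{Step 4 (property (3), the main obstacle).} Fix $Q_i\in E_{\epsilon,K}$. By \eqref{Piano} the polar map $g$ of $D^sf$ agrees on $2Q_i\cap S$, in $|D^sf|$-average up to $\epsilon$, with the constant rank-one tensor $u_i\otimes v_i$; hence slicing $\Q_i$ by segments parallel to $v_i$ produces, after integration, essentially all of $|Df|(\Q_i)$ as tangential variation of $f$ along those slices, whereas slicing by segments parallel to $v_i^{\bot}$ produces total tangential variation only $\lesssim\epsilon|D^sf|(4Q_i\cap S)$ plus absolutely-continuous and approximation remainders of size $\lesssim\epsilon 2^{-2K}$. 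What remains is to dominate, for $\H^1$-a.e.\ slice $\sigma_X=[X_*X^*]$ parallel to $v_i$, the geodesic distance $d_{\phi(\partial\Q_i)}(\phi(X_*),\phi(X^*))$ by $|D_{\tau}f_{\rceil\sigma_X}|(\sigma_X)+(\text{error})$: the filled-in image of $f$ along $\sigma_X$ is a curve from $f(X_*)\approx\phi(X_*)$ to $f(X^*)\approx\phi(X^*)$ of exactly that length, and — because $\phi$ is $\sigma$-close to $h$, which on $\partial\Q_i$ traces the filled image of $f_{\rceil\partial\Q_i}$, and because the grid was selected via Theorem~\ref{KulovyBlesk} precisely so that the boundary trace of $f$ controls its interior — a small perturbation of this curve can be realized inside the Jordan domain enclosed by $\phi(\partial\Q_i)$. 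Integrating over $X$ and invoking the slicing identity gives \eqref{SingEst1}, and the $v_i^{\bot}$-version gives \eqref{SingEst2}. The delicate point, and the one I expect to require the genuine effort, is exactly this last step: keeping the fill-in curves inside $\widetilde{\phi(\partial\Q_i)}$ and bounding every error term (from the $NCBV^+$ error $\sigma$, the vertex shifts, and $D^af$ near $\Q_i$) by $C\epsilon 2^{-2K}$ — this is where the rank-one structure of $D^sf$ on $E$-squares and the careful construction of $\Gamma$ must be used together. Finally one fixes $\sigma$ small enough that all the $o(1)$ remainders above meet the stated bounds.
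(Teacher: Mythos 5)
Your Steps 1 and 2 match the paper, and in Step 3 the high-level idea (transport an $NCBV^+$ approximant, affine on $G$-sides, piecewise-linear elsewhere, then invoke \eqref{NothingEst}, \eqref{SomethingEst}, \eqref{InZ}) is in the right spirit. But Step 4, the proof of (3), has a genuine gap which your sketch itself flags but does not close: you assert that the filled-in image of $f$ along a slice $\sigma_X$ of $\Q_i$ ``can be realized inside the Jordan domain enclosed by $\phi(\partial\Q_i)$'' after a small perturbation, attributing this to Theorem~\ref{KulovyBlesk}. Theorem~\ref{KulovyBlesk} only constrains the trace of $f$ on the new \emph{sides}; it says nothing about where the interior slices of $f$ land relative to the curve $\phi(\partial\Q_i)$, so there is no reason the competitor curve lies in $\widetilde{\phi(\partial\Q_i)}$, and the geodesic distance $d_{\phi(\partial\Q_i)}$ cannot be bounded this way.

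The mechanism the paper uses, and which your proposal is missing, is the \emph{guideline} construction: before applying the no-crossing condition, the grid $\Gamma$ is augmented to $\tilde\Gamma\supset\Gamma$ by adding finitely many extra segments \emph{inside} each $\Q_i$ with $Q_i\in E_{\epsilon,K}$, running parallel to $v_i$ and to $v_i^\perp$ and chosen by an averaging argument so that the tangential variation of $f$ along each guideline is controlled by $|D_{v_i}f|$ (resp.\ $|D_{v_i^\perp}f|$) on the corresponding slab (\eqref{EstimateHelper}--\eqref{EstimateHelperT}); slabs with large boundary variation are instead made thin (\eqref{Alabama}). The $NCBV^+$ condition is then applied to $\tilde\Gamma$, not to $\Gamma$, producing an injective $\phi$ on all of $\tilde\Gamma$. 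Because each guideline lies in the interior of $\Q_i$ with both endpoints on $\partial\Q_i$, injectivity of $\phi$ on $\tilde\Gamma$ forces the $\phi$-image of the guideline to stay inside the Jordan domain $\widetilde{\phi(\partial\Q_i)}$. For each $X\in\pi_{v_i}(\Q_i)$ one then concatenates two short boundary arcs with the nearest guideline image to get an explicit competitor path inside $\widetilde{\phi(\partial\Q_i)}$ from $\phi(X_*)$ to $\phi(X^*)$, whose length is controlled by \eqref{Simple} together with \eqref{EstimateHelper}; integrating over $X$ and using $|Df|(\tilde\Gamma)=0$ and \eqref{Piano} gives \eqref{SingEst1} and \eqref{SingEst2}. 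Without these interior guidelines (and without applying $NCBV^+$ to the augmented grid), your proposal does not produce any admissible curve inside the Jordan domain, so (3) remains unproved.

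A secondary point: you state $\phi$ is injective on each $\partial\Q_i$ separately, but the downstream use (Theorem~\ref{TheMeat}) requires global injectivity of $\phi$ on $\Gamma$; the paper's use of the good arrival grid $\mathcal G$ and generalized segments is designed precisely to guarantee that the piecewise-linear $\phi$ is globally continuous, injective and piecewise linear on the whole of $\tilde\Gamma$. Your ``inscribed-polygon approximation of $H_{\sigma\rceil S}$'' on each side would need an analogous global consistency argument to avoid self-crossings between different sides.
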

	\begin{proof}
		\step{1}{Choice of $\Gamma$}{CoG}
		
		Each vertex $V$ of each square $Q_i$ has at most four neighbouring vertexes, call them $\tilde{V_1}, \dots, \tilde{V_4}$. By the Fubini theorem we have the existence of an $X \in Q(V, 2^{-K-2})$ such that $\L^2(\{\tilde{X}: (X,\tilde{X})\in H_{V,\tilde{V}_j} \} )\geq \frac{4}{5}\L^2(Q(\tilde{V}_j, 2^{-K-2}) )$ holds for all $j=1,2,3,4$ simultaneously. It follows that it is possible to choose $X_V \in Q(V, 2^{-K-2})$ for each $V$ such that for every pair of neighbours $V$ and $\tilde{V}$ we have that $(X_{V}, X_{\tilde{V}})\in H_{V,\tilde{V}}$. The squares $Q_i$ can be described as the convex hull of their vertexes $V_1, \dots V_4$ and the corresponding quadrilateral $\Q_i$ is the convex hull of $X_{V_1}, \dots , X_{V_4}$. By definition it is not hard to check that $\{\Q_i\}$ are pairwise disjoint outside their mutual boundaries, they are convex quadrilaterals and $X_{V}$ lies in the boundary of $\Q_i$ exactly when $V$ is a vertex of $Q_i$. This is point (2). Theorem~\ref{KulovyBlesk} and $(X_{V}, X_{\tilde{V}})\in H_{V,\tilde{V}}$ guarantees point (1) of our claim. For every unit vector $v$ the set $\{(X,\tilde{X}) : \frac{X-\tilde{X}}{|X-\tilde{X}|} = v \}$ has Hausdorff dimension $3$ and therefore has $\L^4$ measure $0$. Therefore it is not restrictive to assume that the sides of $\Q_i$ are not parallel to $v_i$ or $v_i^{\bot}$.
		
		In order to prove points (3)-(6) we need to define the map $\phi$. Before we start the construction of $\phi$ itself we add extra lines to the grid $\Gamma$ to get an augmented grid $\tilde{\Gamma}$. Although we do not need to use $f$ on $\tilde{\Gamma} \setminus \Gamma$ we use the extra lines added to get a good parametrization of the geometric representative of $f$ on $\Gamma$. This is equivalent to the concept of guidelines from \cite{CKR}.

		\step{2}{Construction of $\tilde{\Gamma}$ by the choice of guidelines}{CotG}

		For each $Q_i \in E_{\alpha, K}$, by \eqref{Piano}, we have $|u_i| = |v_i|=1$ such that
		$$
			\int_{S\cap 2Q_i}|g(z)-u_i\otimes v_i|d|D^sf|(z) \leq \epsilon |D^sf|(4Q_i \cap S ).
		$$
		For almost every $X\in \pi_{v_i}(\Q_i)$ we have $f_{\rceil [X+\er v_i]\cap Q(0,1)}$ in $BV$ on $[X+\er v_i]\cap Q(0,1)$. The corresponding claim holds for almost every $Z\in \pi_{v_i^{\bot}}(\Q_i)$. It follows from the $BV$ on lines characterization and Lemma~\ref{Stupido} that
		$$
			|\langle Df, v_i\rangle | (\Q_i) = \int_{\pi_{v_i}(\Q_i)}|D_{\tau}f_{\rceil [X+\er v_i]\cap \Q_i}|([X+\er v_i]\cap Q(0,1)) \, d\H^1(X)
		$$
		and, by \eqref{Piano},
		$$
			|\langle Df, v_i^{\bot}\rangle | (\Q_i) = \int_{\pi_{v_i^{\bot}}(\Q_i)}|D_{\tau}f_{\rceil [Z+\er v_i^{\bot}]\cap \Q_i}|([Z+\er v_i^{\bot}]\cap Q(0,1))\, d\H^1(Z) \leq \epsilon|D^sf|(4Q_i).
		$$
		
		Since $\Gamma$ is admissible for $f$ we have $f_{\rceil \Gamma}$ is $BV$ on $\Gamma$. Therefore for each pair, $V_1, V_2$ of neighbouring vertices of $\Q_i$
		\begin{equation}\label{FiniteBigJumps}
		\text{there exists a finite set } J_{V_1,V_2} = \{X \in [V_1V_2]: |D_{\tau} f_{\rceil [V_1V_2]}|(\{X\}) > \epsilon 2^{-2K}\}.
		\end{equation}
		We call the cardinality of this finite set $\mathfrak{K}$.
		
		We now chop $\Q_i$ into slices parallel to $v_i$. Let us have a locally finite decomposition of $\er$ into pairwise disjoint intervals indexed by $m\in \mathbb{Z}$ called $I_{m}$. Then we define $S_{i, m} = \Q_i \cap \{(x,y)\in \er^2: \langle (x,y), v^{\bot}_i \rangle \in I_{m} \}$. Because the sides of $\Q_i$ are not parallel to $v_i$ it holds that for every $X \in [\pi_{v_i}(\Q_i)]^{\circ}$ there are exactly two distinct points $X_*,X^* \in \partial \Q_i$ such that $\pi_{v_i}(X_*) = \pi_{v_i}(X^*) = X$. For simplicity denote the points $X_*$ and $X^*$ such that $\langle X_*, v_i \rangle < \langle X^*, v_i \rangle$. In fact, the sides of $\partial Q$ are separated into two categories, either the points of the side are all $X^*$-type points (an upper side) or all the points of the side are $X_*$-type points (a lower side). Then we denote $S^{+}_{i,m}$ the set of points $X^* \in \partial \Q_i \cap S_{i,m}$ that lie on an ``upper'' side (with respect to $v_i$). Similarly we call $S^{-}_{i,m}$ the set of points $X^* \in \partial \Q_i \cap S_{i,m}$ that lie on a ``lower'' side (with respect to $v_i$). By choosing the intervals $I_m$ carefully we can achieve that either
		\begin{equation}\label{Alabama}
			|D_{\tau} f_{\rceil \Gamma}|(\partial \Q_i \cap S_{i,m}) < \epsilon 2^{-2K} \ \text{ or } \ \H^1(S_{i,m}^{\pm}) < \frac{\epsilon}{2^{2K}|D_{\tau} f_{\rceil \Gamma}|(\Gamma)\mathfrak{K}}
		\end{equation}
		for every $i,m$. We may assume also that $S_{i,m}^{\pm}$ is always a segment.
		
		The argument from the previous paragraph can be repeated in the perpendicular direction $v_i^{\bot}$ (and if necessary increase the number $\mathfrak{K}$). This gives us a finite number of segments $T_{i,m}^{\pm}$ covering $\partial \Q_i$ and either
		$$
			|D_{\tau} f_{\rceil \Gamma}|(\partial \Q_i \cap T_{i,m}) < \epsilon 2^{-2K} \ \text{ or } \ \H^1(T_{i,m}^{\pm}) < \frac{\epsilon}{2^{2K}|D_{\tau} f_{\rceil \Gamma}|(\Gamma)\mathfrak{K}}.
		$$
		As before we separate $\partial \Q_i$ into upper and lower sides (with respect to $v_i^{\bot}$) and points on lower sides we denote by $Z_*$ and points on upper sides we denote by $Z^*$. Note that even if $S_{i,m}^- = T_{i,m}^-$ then $S_{i,m}^+ \neq T_{i,m}^+$.
		
		By Remark~\ref{GGG}, we have for $\H^1$-almost every $X\in \pi_{v_i}(\Q_i)$ that
		\begin{equation}\label{Choice}
			\Gamma \cup ([X+ v_i\er]\cap \Q_i) \cup ([Z + v_i^{\bot}\er]\cap \Q_i)	\text{ is admissible for $f$, for $\H^1$-almost every $Z$.}
		\end{equation}
		Then for every $S_{i,m}$ such that $|D_{\tau} f_{\rceil \Gamma}|(\partial \Q_i \cap S_{i,m}) < \epsilon 2^{-2K}$ we find an $X_{i,m}$ satisfying \eqref{Choice} and such that
		\begin{equation}\label{EstimateHelper}
			\begin{aligned}
				\H^1\big(\pi_{v_i}(S_{i,m})\big) |D_{\tau}f_{\rceil [X_{i,m}+\er v_i]\cap \Q_i}|([X_{i,m}+\er v_i]\cap \Q_i)  \leq |D_{v_i}f|(S_{i,m}\cap \Q_i).
			\end{aligned}
		\end{equation}
		By $X_{i,m}^*$ we denote the point in $[X_{i,m}+\er v_i]\cap S_{i,m}^+$ and by $X_{i,m, *}$ we denote the point in $[X_{i,m}+\er v_i]\cap S_{i,m}^-$ (for clarification see Figure~\ref{Fig:ASet}).
		
		Similarly, for each $T_{i,m}$ we find an $Z_{i,m} \in T_{i,m}$ such that 
		\begin{equation}\label{Lego}
			\Gamma \cup \bigcup_{m, i} ([X_{i,m}+ v_i\er]\cap \Q_i) \cup ([Z_{i,m} + v_i^{\bot}\er]\cap \Q_i)
		\end{equation}
		is admissible for $f$ and
		\begin{equation}\label{EstimateHelperT}
			\begin{aligned}
				\H^1\big(\pi_{v_i^{\bot}}(T_{i,m})\big) |D_{\tau}f_{\rceil [Z_{i,m}+\er v_i^{\bot}]\cap \Q_i}|([Z_{i,m}+\er v_i^{\bot}]\cap \Q_i) \leq |D_{v_i^{\bot}}f|(T_{i,m}\cap \Q_i)
			\end{aligned}
		\end{equation}
		for every $T_{i,m}$ such that $|D_{\tau} f_{\rceil \Gamma}|(T_{i,m}) < \epsilon 2^{-2K}$. The choice of $X_{i,m}, Z_{i,m}$ will be exactly what we need to get the estimate \eqref{SingEst1} and \eqref{SingEst2}. To fix a temporary notation, we call $\hat{\Gamma}$ the good non-straight grid for $f$ which is constructed by repeating the process in \eqref{Lego} for every $\Q_i$ such that $Q_i\in E_{\epsilon, K}$.

		The Figure~\ref{Fig:ASet} can help orient the reader in the following construction. For $S_{i,m}$ such that  $|D_{\tau} f_{\rceil \Gamma}|(\partial \Q_i \cap S_{i, m}) \geq \epsilon 2^{-2K}$ we find pairs of points $X_{i,m}^+, X_{i,m}^-\in \pi_{v_i}(S_{i,m})$ satisfying \eqref{Choice} such that the sets
		$$
			S_{i,m}^l = \{(x,y)\in S_{i,m}: \langle (x,y), v_i^{\bot} \rangle \leq \langle X_{i,m}^-, v_i^{\bot} \rangle \}
		$$
		and
		$$
			S_{i,m}^r = \{(x,y)\in S_{i,m}: \langle (x,y), v_i^{\bot} \rangle \geq \langle X_{i,m}^+, v_i^{\bot} \rangle \}
		$$
		satisfy
		\begin{equation}\label{BadSegemnts}
			\begin{aligned}
				|D_{\tau} f_{\rceil \Gamma}|(\partial\Q_i \cap \pi_{v_i}^{-1} (S_{i,m}^l)) < \epsilon 2^{-2K} \text{ and } 
				|D_{\tau} f_{\rceil \Gamma}|(\partial\Q_i \cap \pi_{v_i}^{-1} (S_{i,m}^r)) < \epsilon 2^{-2K} .
			\end{aligned}
		\end{equation}
		Similarly, for $T_{i,m}$ such that $|D_{\tau} f_{\rceil \Gamma}|(\partial \Q_i \cap T_{i, m}) \geq \epsilon 2^{-2K}$ we find pairs of points $Z_{i,m}^+, Z_{i,m}^-\in \pi_{v_i}(T_{i,m})$ such that
		$$
			\tilde{\Gamma} : = \hat{\Gamma} \cup \bigcup_m ([X^{\pm}_{i,m}+ v_i\er]\cap \Q_i) \cup ([Z^{\pm}_{i, m} + v_i^{\bot}\er]\cap \Q_i)
		$$
		is admissible for $f$ and such that the sets
		$$
			T_{i,m}^l = \{(x,y)\in \er^2: \langle (x,y), v_i \rangle \leq \langle Z_{i,m}^-, v_i \rangle \}
		$$
		and
		$$
			T_{i,m}^r = \{(x,y)\in \er^2: \langle (x,y), v_i \rangle \geq \langle Z_{i,m}^+, v_i \rangle \}
		$$
		satisfy
		\begin{equation}\label{MoreBadSegemnts}
			\begin{aligned}
				|D_{\tau} f_{\rceil \Gamma}|(\partial\Q_i \cap \pi_{v_i^{\bot}}^{-1} (T_{i,m}^l)) < \epsilon 2^{-2K} \text{ and } 
				|D_{\tau} f_{\rceil \Gamma}|(\partial\Q_i \cap \pi_{v_i^{\bot}}^{-1} (T_{i,m}^r)) < \epsilon 2^{-2K} .
			\end{aligned}
		\end{equation}
		As argued above we have that almost every choice of $X_{i,m}^{\pm}$ permits almost any choice of $Z_{i,m'}^{\pm}$ and so there is no obstacle in choosing $X_{i,m}^{\pm}$ and $Z_{i,m}^{\pm}$ for every $m$ and for each $\Q_i$ so that the resulting set $\tilde{\Gamma}$ is a good non-straight grid for $f$.
		 
		\step{3}{Injective approximations of the geometric representative of $f$}{Station}
		
		By $\hat{\phi}$ we denote the geometric representative of $f$ on $\tilde{\Gamma}$ as defined in \eqref{defh} and its following paragraphs. Then Lemma~\ref{ArrivalGrid} provides a good arrival grid, $\mathcal{G}$, associated with $\tilde{\Gamma}$ and $\hat{\phi}$ with side length $\kappa = \frac{\epsilon}{2^K+1}$ in the sense of Definition \ref{good arrival grid}.
		
		By the definition of the good arrival grid $\mathcal{G}$, the set $P:=\hat{\phi}^{-1}(\mathcal G)\cap \tilde{\Gamma}$ is finite and does not contain any vertices of $\tilde{\Gamma}$. Nor does $\hat{\phi}(\tilde{\Gamma})$ intersect any vertex of $\G$ (the points denoted as $(w_n, z_m)$ in Definition~\ref{good arrival grid}). 
		Further, for every point $(x,y)\in P$, it holds that the derivative of $\hat{\phi}$ at $(x,y)$ tangential to $\tilde{\Gamma}$ (we denote it as $\partial_{\tau}\hat{\phi}(x,y)$) has non-zero component perpendicular to the side of $\mathcal{G}$ containing $\hat{\phi}(x,y)$ (the existence of $\partial_{\tau}\hat{\phi}$ at all points of $P$ is a requirement of the good arrival grid, see Definition \ref{good arrival grid}). Therefore there exists a smallest perpendicular component whose size is $v>0$.

		For each point $a\in P$ we have some $d_{a}>0$ such that when $(x,y)\in \tilde{\Gamma}$ and $|(x,y)-a|< d_{a}$ then
		$$
		\hat{\phi}(x,y)-\hat{\phi}(a) - \partial_{\tau}\hat{\phi}(a)[(x,y)-a]< \frac{v}{3}|(x,y) - a|.
		$$
		Since $P$ is finite we define $d := \min_{a\in P}d_a>0$. As a consequence, the images through $\hat{\phi}$ of the endpoints of the segments $B(a, d)\cap\tilde{\Gamma}$ have distance at least $\frac{vd}{2}$ from $\hat{\phi}(a)$. By making $d$ smaller if necessary we can assume that each $B(a, d)\cap\tilde{\Gamma}$ is a segment and each pair of these (finitely many) segments is disjoint neither does any of the segments contain any vertex of $\tilde{\Gamma}$. The choice of the number $d$ has been made so that the following holds; let $c$ be an endpoint of the segment of $B(a, d)\cap\tilde{\Gamma}$ then $|\hat{\phi}(c) - \hat{\phi}(a)| \geq \frac{vd}{2}$ and this holds for all $a \in P$.
		
		On the other hand, being $\hat{\phi} \big(\tilde{\Gamma} \setminus \bigcup_{a\in P} B(a, d) \big)$ a closed set, it follows that there exists a $\sigma_0>0$ such that
		\begin{equation}\label{Marathon}
		\dist\bigg(\hat{\phi} \Big(\tilde{\Gamma} \setminus \bigcup_{a\in P}B\big(a,d\big)\Big), \mathcal{G}\bigg) \geq 3\sigma_0.
		\end{equation}
		We have that $\tilde{\Gamma}$ is a good non-straight grid and so the $NCBV^+$ condition enjoyed by $f$ garantees the existence of injective uniform approximations of $\hat{\phi}$. Equation \eqref{Marathon} immediately implies that for any $0<\sigma\leq \sigma_0$ and any $\tilde{\phi}_{\sigma}$, continuous injective approximation of $\hat{\phi}$ with $\|\tilde{\phi}_{\sigma} - \hat{\phi}\|_{\infty,\tilde{\Gamma}} \leq \sigma$, it holds that $\tilde{\phi}_{\sigma}^{-1}(\mathcal{G})\subset \bigcup_{a\in P} B\big(a,d\big) \cap \tilde{\Gamma}$.

		Let 
		$$
			\rho'=\min\{|\mathbf{a}-\mathbf{b}|\colon \mathbf{a}\in \hat{\phi}(P),\, \mathbf{b} \textrm{ vertex of }\mathcal{G}\}     
		$$
		and
		$$
			\rho''=\min\{|\mathbf{a}-\mathbf{b}|\colon\,  \mathbf{a}\neq \mathbf{b}, \ \mathbf{a},\mathbf{b}\in \hat{\phi}(P)\}
		$$
		Finally, we set 
		$$
			\rho=\min\{\rho',\,\rho', \sigma_0, \tfrac{1}{100}\}.
		$$
		Notice that $\rho$ is positive due to the properties of good arrival grid.
		Let 
		\begin{equation}\label{SigmaDef}
		0<\sigma\leq \frac{\epsilon^2\rho}{ 12(2^K+1)}.
		\end{equation}
		Then by applying the $NCBV^+$ condition to $\hat{\phi}$ we get a continuous injective $\tilde{\phi}_{\sigma}$ with $\|\tilde{\phi}_{\sigma} - \hat{\phi}\|_{\infty, \Gamma}<\sigma$.
		
		We adjust the map $\tilde{\phi}_{\sigma}$ as follows. For each $a\in P=\hat{\phi}^{-1}(\mathcal G)\cap \tilde{\Gamma}$ we find the first and last point (i.e. the points furthest away from $a$) on the segment $B(a, d)\cap\tilde{\Gamma}$ (call them $a^-$ and $a^+$ respectively) such that $\tilde{\phi}_{\sigma}(a^{\pm}) \in B(\hat{\phi}(a), 2\sigma)$. Notice that \eqref{Marathon} and the choice of $\sigma$ imply that
		\[ \hat{\phi} \Big(\tilde{\Gamma} \setminus \bigcup_{a \in P}B\big(a,d\big)\Big) \cap \Big( \bigcup_{a \in P} B(\hat{\phi}(a), 2\sigma) \Big) = \emptyset,  \] 
		hence we have that $\tilde{\phi}_\sigma$ intersects $B(\hat{\phi}(a), 2\sigma)$ only on the segments $B(a, d)\cap\tilde{\Gamma}$, $a\in P$.
		We define
		$$
		\tilde{\tilde{\phi}}_{\sigma}(t) = \frac{|t - a^+|}{|a^+ - a^-|}\tilde{\phi}_{\sigma}(a^-) +\frac{|t - a^-|}{|a^+ - a^-|} \tilde{\phi}_{\sigma}(a^+) \qquad \mbox{for all $t \in [a^-a^+]$ and $a \in P$}
		$$
		then we set
		$$
		\tilde{\tilde{\phi}}_{\sigma}(t) ={\tilde{\phi}}_{\sigma}(t) \qquad \text{ for } t\in \tilde{\Gamma} \setminus \bigcup_{a\in P} [a^- a^+].
		$$
		By construction, it follows that $\tilde{\tilde{\phi}}_{\sigma}(t)$ is again continuous, injective and $\|\tilde{\tilde{\phi}}_{\sigma} -\hat{\phi}\|_{L^\infty(\tilde{\Gamma})}\leq 7\sigma$. Since $\tilde{\tilde{\phi}}_{\sigma}(a^-)$ and $\tilde{\tilde{\phi}}_{\sigma}(a^+)$ must be separated by $\mathcal{G}$ there is exactly one point $\tilde{a}$ in each $[a^-a^+]$ which is mapped onto $\mathcal{G} \cap B(\hat{\phi}(a), 2\sigma)$.
		
		At this stage we use $\tilde{\tilde{\phi}}_{\sigma}$ and the arrival grid $\mathcal{G}$ to define a piecewise linear map from $\tilde{\Gamma}$ to $\er^2$. We will call this map $\phi$. 
		We start by specifying the image, $\phi(\tilde{\Gamma})$. 
		For each segment of $\tilde{\Gamma}$ we have a finite number of points $\tilde{a}$ such that $\tilde{\tilde{\phi}}_{\sigma}(\tilde{a}) \in \mathcal{G}$. 
		Whenever we have a pair of adjacent points $\tilde{a}_1, \tilde{a}_2$ lying on a common segment of $\tilde{\Gamma}$ such that $\tilde{\tilde{\phi}}_{\sigma}(\tilde{a}_1)$ and $\tilde{\tilde{\phi}}_{\sigma}(\tilde{a}_2)$ lie on two distinct sides of a rectangle in $\mathcal{G}$ we define the segment $S_{a_1,a_2} = [\tilde{\tilde{\phi}}_{\sigma}(\tilde{a}_1)\tilde{\tilde{\phi}}_{\sigma}(\tilde{a}_2)]$ where $a_1$ and $a_2$ are the unique points in $P$ for which $\tilde{a}_i \in B(a_i, d)$.
		
		Let us now consider a pair of adjacent $\tilde{a}_1$ and $\tilde{a}_2$ for which $\tilde{\tilde{\phi}}_{\sigma}(\tilde{a}_1),\tilde{\tilde{\phi}}_{\sigma}(\tilde{a}_2)$ lie on the same side of a rectangle in $\mathcal{G}$. 
		Firstly notice that for any such pair $\tilde{a}_1$ and $\tilde{a}_2$ there exists an $0<\xi_{a_1,a_2}$ so small that the generalized segments (see Definition \ref{generalized segments}) with $\xi = \xi_{a_1,a_2}$ intersect only those previously defined straight segments $S_{a_3,a_4}$ for which $\tilde{\tilde{\phi}}_{\sigma}([\tilde{a}_1\tilde{a}_2])$ was already intersecting $\tilde{\tilde{\phi}}_{\sigma}([\tilde{a}_3\tilde{a}_4])$. We define
		$$
			\xi = \tfrac{1}{2}\min\big\{\xi_{a_1,a_2}: a_1,a_2 \in P \text{ adjacent and } f(a_1), f(a_2) \text{ lie on a common side of }\mathcal{G}\big\}.
		$$
		We assume that $\xi <\kappa < \epsilon$ and define $S_{a_1,a_2}$ as the generalised segment from $\tilde{\tilde{\phi}}_{\sigma}(\tilde{a}_1)$ to $\tilde{\tilde{\phi}}_{\sigma}(\tilde{a}_2)$ with the chosen $\xi$.	
		
		It is very easy to check that any pair $S_{a_1,a_2}$ and $S_{a_3,a_4}$ as defined above, where $a_1,a_2$ and $a_3,a_4$ are pairs of adjacent points of $P$ lying on a common segment of $\tilde{\Gamma}$, intersect each other if and only if $\tilde{\tilde{\phi}}_{\sigma}([\tilde{a}_1\tilde{a}_2])$ intersects $\tilde{\tilde{\phi}}_{\sigma}([\tilde{a}_3\tilde{a}_4])$. Further, any two (distinct) paths can have at most one intersection. 
		
		Now we are in a position to define the map $\phi$ on $\tilde{\Gamma}$. We define $\phi(a) = \tilde{\tilde{\phi}}_{\sigma}(\tilde{a})$ for all $a\in P$ and for all the corresponding $\tilde{a}$. 
		Further, for every $\{X_{i,j}\} = \gamma_i([0,1])\cap\gamma_j([0,1])$, intersection point of the grid $\tilde{\Gamma}$, there exists exactly two pairs of adjacent $a_1,a_2\in P$ (both lying on $\gamma_i$) and $a_3,a_4 \in P$ (both lying on $\gamma_j$) closest to $X_{i,j}$ on $\gamma_i$ and $\gamma_j$ respectively. That is there exists a $t_1,t_2, t_3,t_4 \in [0,1]$ such that $\gamma_i(t_1) = a_1, \gamma_i(t_2) = a_2$, $\gamma_j(t_3) = a_3, \gamma_j(t_3) = a_3$ further $\gamma_{i}((t_1,t_2))\cap P = \emptyset$ and $\gamma_{j}((t_3,t_4))\cap P = \emptyset$.
		
		Then, by construction, there exists exactly one point of intersection call it $\mathbf{X}_{i,j}$ in the set $S_{a_1,a_2}\cap S_{a_3,a_4}$ and we define $\phi(X_{i,j}) = \mathbf{X}_{i,j}$. Thus we have separated the grid $\tilde{\Gamma}$ into simple segments lying between adjacent intersections with $\mathcal{G}$, intersecting segments of $\tilde{\Gamma}$ or a combination of the two. In each case there is a clear correspondence between the endpoints of remaining segments in $\tilde{\Gamma}$ and (parts of the possibly generalized) segments defined in the previous paragraph. We define $\phi$ by parametrizing these segments (or possibly paths consisting of 2 segments) at constant speed from the corresponding segments in $\tilde{\Gamma}$.
		
		Thus we obtain a continuous injective piecewise linear mapping $\phi:\tilde{\Gamma}\to Q(0,1)$ satisfying $\|\hat{\phi} - \phi\|_{L^\infty(\tilde{\Gamma})}\leq 4\kappa \leq \eps 4(2^K+1)^{-1}$. The last step we make in order to define $\phi$ is to redefine it as linear on each side of $\partial\Q_i \in G_{\epsilon, \alpha, K}$ keeping the same values at its vertices. By the definition of $G_{\epsilon, \alpha, K}$ (especially \eqref{Scorpions}) it is not hard to see that this modification keeps $\phi$ continuous, injective and piecewise linear. We refer to \cite{C}, Proof of Theorem 4.1, Step 3, the detailed argument.
		 Thus we have achieved point (5) of the claim.

		 \step{4}{Estimates}{Satisfaction}

		We claim that for any pair of points $a,b \in [ab]\subset \tilde{\Gamma}$ we have that
		\begin{equation}\label{Simple}
			|D_{\tau}\phi|([ab]) \leq (1+\xi)(1+\epsilon) \left(|D_{\tau}\hat{\phi}|([ab]) + 4\kappa \right).
		\end{equation}
		Indeed it holds that $|\hat{\phi}(a) - \phi(a)|\leq 3\sigma \leq \tfrac{1}{2}\epsilon\rho$ for each $a\in P$. Immediately from the definition of $\rho$ we obtain that for any pair $a, a' \in P$ adjacent on a segment of $\tilde{\Gamma}$ we have that either $\hat{\phi}(a) = \hat{\phi}(a')$ or $|\hat{\phi}(a)- \hat{\phi}(a')| \geq \rho.$  In the first case we know that the length of the curve given by $\hat{\phi}$ on $[aa']$ is at least $100\sigma$ by \eqref{Marathon}, $\epsilon<\tfrac{1}{100}$, and $\sigma\leq \epsilon^2 \rho \leq \epsilon \sigma_0$. On the other hand we have $|\phi(a)- \phi(a')| < 6\sigma$ and so the length of $S_{aa'}$ is at most $(1+\xi)6\sigma \leq 12\sigma < 100\sigma$. Therefore, on such segments we in fact have that $|D_{\tau}\phi|([aa']) < |D_{\tau}\hat{\phi}|([aa'])$.
		
		In the second case we have that $|\hat{\phi}(a)- \hat{\phi}(a')| \geq \rho$ and, by \eqref{SigmaDef}, that
		$$
			|\phi(a) - \hat{\phi}(a)| < 3\sigma\leq \epsilon\rho
		$$
		and the same holds also for $a'$. We can estimate by the triangle inequality and \eqref{SigmaDef} that
		$$
			\begin{aligned}
				|\phi(a) - \phi(a')|
				&\leq |\phi(a) - \hat{\phi}(a)|+|\hat{\phi}(a) - \hat{\phi}(a')|+|\hat{\phi}(a') - \phi(a')|\\
				& \leq |\hat{\phi}(a) - \hat{\phi}(a')| + 6\sigma\\
				& \leq (1+\epsilon)|\hat{\phi}(a) - \hat{\phi}(a')|.
			\end{aligned}
		$$
		Now, because the length of the generalized segment between $\mathbf{a}$ and $\mathbf{b}$ with parameter $\xi$ has length bounded by $(1+\xi)|\mathbf{a} - \mathbf{b}|$ (see Proposition~\ref{EverythingIDo}), we get that
		\begin{equation}\label{BasicA}
			|D_{\tau}\phi|([aa']) \leq (1+\xi)(1+\epsilon)|D_{\tau}\hat{\phi}|([aa'])
		\end{equation}
		for any $a,a'\in P$ adjacent on a segment in $\tilde{\Gamma}$. Summing over subsegments we see immediately that the same holds for any $a,a'\in P$ lying on a segment of $\tilde{\Gamma}$ but not necessarily adjacent.
		
		The argument for a general pair $a,b$ both lying on a single segment of $\tilde{\Gamma}$ is as follows. We use the estimate \eqref{BasicA} on the maximal segment $[a_1a_2]\subset [a,b]$ for $a_1, a_2 \in P$. Now it remains to estimate the length of the image of $[aa_1]$ and $[a_2b]$; or if $[ab]\cap P = \emptyset$ then we have to estimate the length of the image of $[ab]$ knowing that $[ab]\cap P = \emptyset$. Let us deal with the former case first; the latter case readily follows from the first. We can deal with each segment separately so let us estimate the length of $\phi([aa_1])$. The image of $[aa_1]$ is a generalized segment contained in some rectangle, i.e. $\{(x,y) \in \er^2: w_n<x<w_{n+1}, z_m < y<z_{m+1}\}$ of $\G$. The diameter of each rectangle of $\G$ is bounded by $2\frac{\epsilon}{2^K+1}$. Then also the diameter of $\phi([aa_1])$ is also bounded by $2\frac{\epsilon}{2^K+1}$. On the other hand by Proposition~\ref{EverythingIDo} the length of $\phi([aa_1])$ is bounded by $(1+\xi)$ times its diameter. Therefore, the length of $\phi([aa_1])$ is bounded by $2(1+\xi)\frac{\epsilon}{2^K+1}$. The same estimate holds for the length of $\phi([a_2b])$ and indeed for $\phi([ab])$ if $[ab]\cap P = \emptyset$. Assuming then that $[ab]\cap P \neq \emptyset$ we combine \eqref{BasicA} with
		$$
			\H^1(\phi([aa_1])), \ \H^1(\phi([a_2b])) \leq (1+\xi)2\frac{\epsilon}{2^K+1}
		$$
		we get \eqref{Simple}, since $\kappa = \frac{\epsilon}{2^K+1}$. In the case that $[ab]\cap P = \emptyset$ then $\H^1(\phi([ab])) \leq (1+\xi)2\frac{\epsilon}{2^K+1}$ and so \eqref{Simple} holds. 
		 
		 The estimate in (4) follows immediately from $(X_V,X_{\tilde{V}})\in H_{V,\tilde{V}}$ for each adjacent $V, \tilde{V}$ and \eqref{NothingEst}. Similarly we get the estimate (6) immediately from \eqref{SomethingEst}.
		  
		 Let us now prove the estimate \eqref{SingEst1}. As in \eqref{Alabama}, each $S_{i,m}$  has either
		 $$
		 	|D_{\tau} f_{\rceil \partial \Q_i}|(S_{i,m}^+), \  |D_{\tau} f_{\rceil \partial \Q_i}|(S_{i,m}^-) < \epsilon 2^{-2K}
		 $$
		 or
		 $$
		 	\max\{|D_{\tau} f_{\rceil \partial \Q_i}|(S_{i,m}^+) , |D_{\tau} f_{\rceil \partial \Q_i}|(S_{i,m}^-) \} \geq \epsilon 2^{-2K}, \text{ but } \H^1\big(\pi_{v_i}(S_{i,m}) \big) < \frac{\epsilon}{2^{2K}|D_{\tau} f_{\rceil \Gamma}|(\Gamma)}.
		 $$
	
		 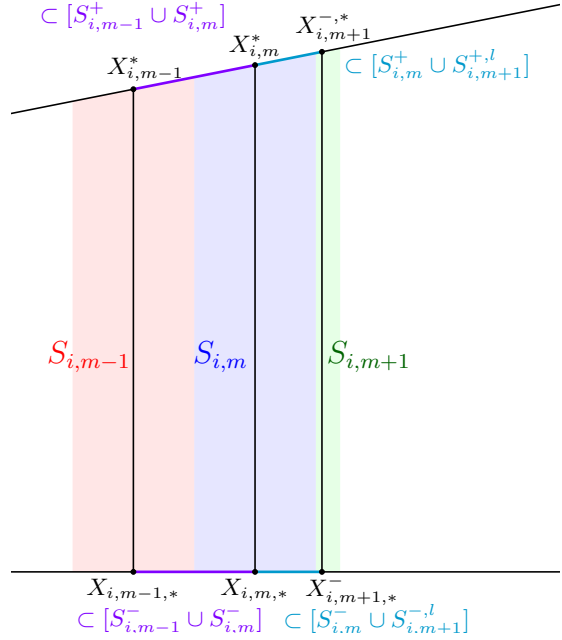
\begin{figure}[h]
		 	\begin{tikzpicture}[line cap=round,line join=round,>=triangle 45,x=1.6cm,y=1.6cm]
		 	\clip(1,3.5) rectangle (5.5,8.8);
		 	\fill[line width=0.6pt,color=ffqqqq,fill=ffqqqq,fill opacity=0.10000000149011612] (1.5,7.9) -- (2.5,8.1) -- (2.5,4.) -- (1.5,4.) -- cycle;
		 	\fill[line width=0.6pt,color=qqffqq,fill=qqffqq,fill opacity=0.10000000149011612] (3.5,8.3) -- (3.7,8.34) -- (3.7,4.) -- (3.5,4.) -- cycle;
		 	\fill[line width=0.6pt,color=qqqqff,fill=qqqqff,fill opacity=0.10000000149011612] (2.5,8.1) -- (3.5,8.3) -- (3.5,4.) -- (2.5,4.) -- cycle;
		 	\draw [line width=0.6pt] (3.,8.2)-- (3.,4.);
		 	\draw [line width=0.6pt] (2.,8.)-- (2.,4.);
		 	\draw [line width=0.6pt] (3.55,8.31)-- (3.55,4.);
		 	\draw [line width=1.0pt,color=xfqqff] (2.,8.)-- (3.,8.2);
		 	\draw [line width=0.6pt] (0.,7.6)-- (2.,8.);
		 	\draw [line width=0.6pt] (3.55,8.31)-- (12.,10.);
		 	\draw [line width=0.6pt] (2.,4.)-- (0.,4.);
		 	\draw [line width=0.6pt] (3.55,4.)-- (14.,4.);
		 	\draw [line width=1.0pt,color=xfqqff] (2.,4.)-- (3.,4.);
		 	\draw [line width=1.0pt,color=qqzzcc] (3.,8.2)-- (3.55,8.31);
		 	\draw [line width=1.0pt,color=qqzzcc] (3.,4.)-- (3.55,4.);
		 	\draw [color=ffqqqq](1.2,6) node[anchor=north west] {$S_{i,m-1}$};
		 	\draw [color=qqqqff](2.4,6) node[anchor=north west] {$S_{i,m}$};
		 	\draw [color=qqwuqq](3.5,6) node[anchor=north west] {$S_{i,m+1}$};
		 	\begin{scriptsize}
		 	\draw [fill=black] (2.,8.) circle (0.9pt);
		 	\draw[color=black] (2.1,8.17) node {$X_{i,m-1}^*$};
		 	\draw [fill=black] (2.,4.) circle (0.9pt);
		 	\draw[color=black] (2.,3.85) node {$X_{i,m-1, *}$};
		 	\draw [fill=black] (3.,8.2) circle (0.9pt);
		 	\draw[color=black] (3.,8.35) node {$X_{i,m}^*$};
		 	\draw [fill=black] (3.,4.) circle (0.9pt);
		 	\draw[color=black] (3.,3.85) node {$X_{i,m,*}$};
		 	\draw [fill=black] (3.55,4.) circle (0.9pt);
		 	\draw[color=black] (3.8,3.85) node {$X_{i,m+1, *}^{-}$};
		 	\draw [fill=black] (3.55,8.31) circle (0.9pt);
		 	\draw[color=black] (3.65,8.52) node {$X_{i,m+1}^{-, *}$};
		 	\draw[color=xfqqff] (2.0,8.6) node {$\subset [S_{i,m-1}^+\cup S_{i,m}^+]$};
		 	\draw[color=xfqqff] (2.3,3.6) node {$\subset [S_{i,m-1}^-\cup S_{i,m}^-]$};
		 	\draw[color=qqzzcc] (4.5,8.2) node {$\subset [S_{i,m}^+\cup S_{i,m+1}^{+,l}]$};
		 	\draw[color=qqzzcc] (4,3.6) node {$\subset [S_{i,m}^-\cup S_{i,m+1}^{-,l}]$};
		 	\end{scriptsize}
		 	\end{tikzpicture}
		 	\caption{A depiction of the layout of the sets $S_{i,m}$, $S_{i,m}^{\pm}$ and the points $X_{i,m}^*$, $X_{i,m,*}$. The set $S_{i,m-1}$ is category-1 and $S_{i,m+1}$ is category-2. The variation along the purple and teal segments is small by the choice of $X_{i,m-1}, X_{i,m}, X_{i,m+1}^-$.}\label{Fig:ASet}
		 \end{figure}
		 We start with the first case. Figure~\ref{Fig:ASet} should help identify the following sets. We define the set
		 $$
		 	\begin{aligned}
		 		A_{i,m}^+&:= \big[S_{i,m-1}^+\cup S_{i,m}^+\cup S_{i,m+1}^+\big] \cap \big\{X\in \er^2: \langle Y_{i,m-1}, v_i^{\bot} \rangle \leq \langle X, v_i^{\bot} \rangle \leq \langle Y_{i,m+1}, v_i^{\bot} \rangle \big\},\\
		 		A_{i,m}^-&:= \big[S_{i,m-1}^-\cup S_{i,m}^-\cup S_{i,m+1}^-\big] \cap \big\{X\in \er^2: \langle Y_{i,m-1}, v_i^{\bot} \rangle \leq \langle X, v_i^{\bot} \rangle \leq \langle Y_{i,m+1}, v_i^{\bot} \rangle \big\},
		 	\end{aligned}
		 $$
		 where $Y_{i,m-1} = X_{i,m-1}$ if $S_{i,m-1}$ is in the first category of \eqref{Alabama} and $Y_{i,m-1} = X_{i,m-1}^+$ if $S_{i,m-1}$ is in the second category of \eqref{Alabama} and $Y_{i,m+1} = X_{i,m+1}$ if $S_{i,m+1}$ is in the first category of \eqref{Alabama} and $Y_{i,m+1} = X_{i,m+1}^-$ if $S_{i,m+1}$ is in the second category of \eqref{Alabama}. In all of the above cases we have
		 $$
		 	|D_{\tau} f_{\rceil \partial \Q_i}|(A_{i,m}^+\cap S_{i,m+1}^+),\  |D_{\tau} f_{\rceil \partial \Q_i}|(A_{i,m}^+\cap S_{i,m-1}^+) \leq \epsilon 2^{-2K}.
		 $$
		 By the choice of $S_{i,m}$ being first category in \eqref{Alabama} we have $|D_{\tau} f_{\rceil \partial \Q_i}|(A_{i,m}^+) \leq 3\epsilon 2^{-2K}$. For each $X^*\in S_{i,m}^+$ we can estimate (using~\eqref{Simple})
		 $$
		 	|D_{\tau} \phi|([X^*X^*_{i,m}]) \leq |D_{\tau} f_{\rceil \partial \Q_i}|(A_{i,m}^+) + 4\kappa \leq 3\epsilon 2^{-2K} + 4\epsilon2^{-K}.
		 $$
		 The above estimate is done on the bottom side in the same way, specifically
		 $$
		 	|D_{\tau} \phi|([X_*X_{i,m,*}]) \leq |D_{\tau} f_{\rceil \partial \Q_i}|(A_{i,m}^-) + 4\kappa \leq 3\epsilon 2^{-2K} + 4\epsilon2^{-K}.
		 $$
		 for all $X_* \in S_{i,m}^-$.
		 
		 For each $X_*\in S_{i,m}^-$ and $X^*\in S_{i,m}^+$ with $\pi_{v_i}(X_*)  = \pi_{v_i}(X^*)$. We define the path $p_{X^*,X_*}\subset \tilde{\Gamma}\cap \overline{\Q_i}$ as
		 $$
		 	p(X^*, X_*) : = [X^* X_{i,m}^*]\cup[X_{i,m}^*X_{i,m,*}]\cup [X_{i,m,*}, X_*].
		 $$
		 We estimate using \eqref{Simple} and \eqref{EstimateHelper} that
		 $$
		 	\begin{aligned}
		 		|D_{\tau}\phi|([X_{i,m}^*X_{i,m,*}]) &\leq (1+\epsilon)(1+\xi)|D_{\tau}f_{\rceil [X_{i,m}+\er v_i]\cap \Q_i}|([X_{i,m}+\er v_i]\cap \Q_i) +4\kappa \\
		 		&\leq (1+\epsilon)(1+\xi)\frac{ |D_{v_i}f|(S_{i,m}\cap \Q_i) }{ \H^1\big(\pi_{v_i}(S_{i,m})\big)} + 4\kappa.
		 	\end{aligned}
		 $$
		 Combining the above estimates we get
		 $$
		 	|D_{\tau} \phi|(p(X^*, X_*)) \leq (1+\epsilon)(1+\xi) \frac{ |D_{v_i}f|(S_{i,m}\cap \Q_i) }{ \H^1\big(\pi_{v_i}(S_{i,m})\big)} +6\epsilon 2^{-2K}+ 12\kappa.
		 $$
		 This means that (see \eqref{dDef})
		 \begin{equation}\label{Services}
		 	d_{\phi(\partial \Q_i)}(\phi(X^*),\phi(X_*)) \leq (1+\epsilon)(1+\xi)\frac{ |D_{v_i}f|(S_{i,m}\cap \Q_i)}{ \H^1\big(\pi_{v_i}(S_{i,m})\big)} +6\epsilon 2^{-2K}+ 12\kappa
		 \end{equation}
		 for every opposing pair $X_*$, $X^*$ in $S_{i,m}$ of first category.
		 
		 For opposing pairs $X_*, X^*$ in $S_{i,m}$ for $S_{i,m}$ in the second category of \eqref{Alabama} we use the estimate
		 \begin{equation}\label{TheBad}
		 	\begin{aligned}
		 		d_{\phi(\partial \Q_i)}(\phi(X^*),\phi(X_*)) &\leq  (1+\epsilon)(1+\xi) |D_{\tau}f_{\rceil \Gamma}|(\partial\Q_i) +4\kappa\\
		 		& \leq (1+\epsilon)(1+\xi) |D_{\tau}f_{\rceil \Gamma}|(\Gamma) +4\kappa.
		 	\end{aligned}
		 \end{equation}
		 
		 For opposing pairs in the perpendicular direction, i.e. $Z_*\in T_{i,m}^{-}, Z^* \in T_{i,m}^{+}$ we estimate in the same way using \eqref{EstimateHelperT} in place of \eqref{EstimateHelper}. For $Z^* \in T_{i,m}^{+}, Z_*\in T_{i,m}^{-}$ we get
		 \begin{equation}\label{TheUgly}
		 		d_{\phi(\partial \Q_i)}(\phi(Z^*),\phi(Z_*)) \leq \begin{cases}
		 		\frac{  (1+\epsilon)(1+\xi)|D_{v_i^{\bot}}f|(T_{i,m}\cap \Q_i) }{ \H^1\big(\pi_{v_i^{\bot}}(T_{i,m})\big)} +6  \epsilon 2^{-2K}+ 12\kappa\quad &T_{i,m}^{\pm} \text{ is category 1}\\
		 		 (1+\epsilon)(1+\xi)|D_{\tau}f_{\rceil \Gamma}|(\Gamma) +4\kappa \quad &T_{i,m}^{\pm} \text{ is category 2}.
		 		\end{cases}
		 \end{equation}
		 
		 Find $\theta_i \in [0,2\pi)$ such that $v_i = (\cos\theta_i, \sin\theta_i)$. We denote the set of indexes $m$ such that $S_{i,m}$ is category 1 by $C_1$ and category 2 by $C_2$. We denote the set of indexes $m$ such that $T_{i,m}$ is category 1 by $C_3$ and category 2 by $C_4$. We use the admissibility of $\tilde{\Gamma}$ (especially $|Df|(\tilde{\Gamma}) = 0$) to prove \eqref{SingEst1}
		$$
		 	\begin{aligned}
		 		\int_{\pi_{v_i}(\Q_i)}&d_{\phi(\partial \Q_i)}(\phi(X^*),\phi(X_*)) \, d\H^1(X)\\
		 		& \leq (1+\epsilon)(1+\xi)\Big[\sum_{m\in C_1}  |D_{v_i}f|(S_{i,m}\cap \Q_i) \\
		 		&\quad+  \sum_{m\in C_2}|D_{\tau}f_{\rceil{\Gamma}}|(\Gamma)   \frac{\epsilon}{2^{2K}|D_{\tau} f_{\rceil \Gamma}|(\Gamma) \mathfrak{K}}  \Big]\\
		 		&\quad +[6\epsilon 2^{-2K}+ 12\kappa]\mathcal{H}^1(\pi_{v_i}(\Q_i))\\
				&\leq (1+\epsilon)(1+\xi)| Df|(\Q_i) +C2^{-2K}\epsilon.
		 	\end{aligned}
		$$
		Recall our notation $g|D^sf| = D^sf$ and by the Theorem~\ref{ARO} we have $g = u(X)\otimes v(X)$ for $|D^sf|$-almost every $X$. We have $|\langle v(X), v_i^{\bot}\rangle| \leq C|v(X) - v_i| +C |\langle v_i, v_i^{\bot}\rangle|$. Using this and \eqref{Piano}  we calculate
		$$
			\begin{aligned}
				\int_{\pi_{v_i^{\bot}}(\Q_i)}&d_{\phi(\partial \Q_i)}(\phi(Z^*),\phi(Z_*)) \, d\H^1(Z)\\
				& \leq (1+\epsilon)(1+\xi)\Big[\sum_{m\in C_3}  |D_{v_i^{\bot}}f|(T_{i,m}\cap \Q_i) +\sum_{m\in C_4} |D_{\tau}f_{\rceil{\Gamma}}|(\Gamma)\frac{\epsilon}{2^{2K}|D_{\tau} f_{\rceil \Gamma}|(\Gamma)\mathfrak{K}}\Big]\\ 
				&\quad+[6  \epsilon 2^{-2K}+ 12\kappa]\mathcal{H}^1(\pi_{v_i}(\Q_i))\\
				&\leq (1+\epsilon)(1+\xi)|\langle Df, v_i^{\bot}\rangle|(\Q_i) +C2^{-K}\epsilon\\
				&\leq (1+\epsilon)(1+\xi)\epsilon |D^sf|(4Q_i \cap S ) +C2^{-2K}\epsilon
			\end{aligned}
		$$
		Proving \eqref{SingEst2}.
	\end{proof}
	
	Now it suffices to extend the mapping $\phi$ defined in Proposition~\ref{DefiningPhi} to get a $BV$ homeomorphism. This is the content of the following theorem.
	
	\begin{thm}\label{TheMeat}
		Let $f\in BV(Q(0,1); Q(0,1))$, let $f(x,y) = (x,y)$ for $(x,y)\in \partial Q(0,1)$ and let $f$ satisfy the $NCBV^+$ condition. Then there exists a sequence $f_k\in BV(Q(0,1), Q(0,1))$, $f_k = \id$ on $\partial Q(0,1)$ converging to $f$ area-strictly.
	\end{thm}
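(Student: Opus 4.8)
The plan is to feed the grid $\Gamma$ and the boundary map $\phi$ produced by Proposition~\ref{DefiningPhi} into the extension theorems of Section~2, one quadrilateral at a time, glue the pieces to a bi-Lipschitz homeomorphism of $Q(0,1)$, and then sum the resulting energy bounds. Fix $\epsilon>0$, let $\alpha_0=\alpha_0(\epsilon)$ be as in Theorem~\ref{CEZ}, choose $0<\alpha<\min\{\alpha_0,2^{-9}\}$ with $\alpha^2\ll\alpha_0$ and $\alpha\ll\epsilon$, let $K=K(\epsilon,\alpha)$ (taken as large as needed), let $\{Q_i\}$ be the $K$-dyadic squares and $F_{\epsilon,K}\supset E_{\epsilon,K}$, $G_{\epsilon,\alpha,K}$, $T_{\epsilon,\alpha,K}$, $W_{\epsilon,\alpha,K}$ the families of Theorem~\ref{CEZ}. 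Proposition~\ref{DefiningPhi} then yields a good non-straight grid $\Gamma$, convex quadrilaterals $\{\Q_i\}$ tiling $Q(0,1)$ with $c_i\in\Q_i$, and a continuous injective piecewise-linear $\phi$ on $\Gamma\supset\bigcup_i\partial\Q_i$ with the properties (1)--(6) listed there. On each $\Q_i$ I would extend $\phi_{\rceil\partial\Q_i}$ to a finitely piecewise affine homeomorphism $g_i\colon\Q_i\to\P_i$, where $\P_i$ is the bounded Jordan domain enclosed by $\phi(\partial\Q_i)$, and set $f_{\epsilon,\alpha}:=g_i$ on $\Q_i$. Since the $g_i$ agree on shared edges (all restricting to $\phi$ on $\Gamma$), since $\phi$ is injective on $\Gamma$, and since $\phi$ can be taken equal to $\id$ on $\partial Q(0,1)$ (a routine correction of $\phi$ in a thin collar along $\partial Q(0,1)$, using $f=\id$ there), the resulting $f_{\epsilon,\alpha}$ is a bi-Lipschitz homeomorphism of $Q(0,1)$ onto itself with $f_{\epsilon,\alpha}=\id$ on $\partial Q(0,1)$; in particular it lies in $BV(Q(0,1),Q(0,1))$.

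The extension $g_i$ depends on the category of $Q_i$. For $Q_i\in E_{\epsilon,K}$ I would use Theorem~\ref{Componentwise Extension} with $v_{\theta_i}=v_i$; combining \eqref{SingEst1}, \eqref{SingEst2} with the rank-one identity $g(w_i)=u_i\otimes v_i$ of Theorem~\ref{ARO} — which makes $|Df|(\Q_i)$, up to the negligible a.c.\ part, comparable to $|\langle D^sf,v_i\rangle|(\Q_i)$ — and the pointwise bound $|Dg_i|\le|\langle Dg_i,v_i\rangle|+|\langle Dg_i,v_i^\bot\rangle|$, one gets $\int_{\Q_i}|Dg_i|\le(1+C\epsilon)|Df|(\Q_i)+C\epsilon\,|D^sf|(4Q_i\cap S)+C\epsilon\,2^{-2K}$. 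For $Q_i\in T_{\epsilon,\alpha,K}$ the matrix $\nabla f(w_i)$ has rank one, so after rotating the source $\nabla f(w_i)=\bigl(\begin{smallmatrix}d&0\\0&0\end{smallmatrix}\bigr)$ with $d=|\nabla f(w_i)|\in[\alpha_0,\alpha_0^{-1}]$, and \eqref{TropicalWood} becomes exactly hypothesis \eqref{bound1} of Theorem~\ref{NullExt} (with $\delta r_0\approx C\epsilon\,2^{K}|Df|(2Q_i)$, $r_0\approx 2^{-K}$), the missing $L^\infty$ bound on $\partial_\tau\phi$ coming from its piecewise-linearity and \eqref{Scorpions}; Theorem~\ref{NullExt} then yields $g_i$ with $\|Dg_i-\nabla f(w_i)\|_{L^1(\Q_i)}<C\epsilon\,|Df|(2Q_i)$. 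For $Q_i\in G_{\epsilon,\alpha,K}$, $\phi$ is affine on each side of $\partial\Q_i$ and equals $f$ at the vertices, which by \eqref{Scorpions} and \eqref{InZ} lie within $\alpha^4 2^{-K}$ of the nondegenerate affine map $A_i(x)=f(w_i)+\nabla f(w_i)(x-w_i)$; as $\alpha^2\ll\alpha_0$, $\P_i$ is then a small perturbation of the nondegenerate parallelogram $A_i(\Q_i)$, and the direct triangulation of \cite[Proof of Theorem~4.1, Step~3]{C} extends $\phi$ to $g_i$ with $\|Dg_i-\nabla f(w_i)\|_{L^1(\Q_i)}\le C\alpha\,2^{-2K}$. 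For all remaining $Q_i$ (those in $W_{\epsilon,\alpha,K}$ or $F_{\epsilon,K}\setminus E_{\epsilon,K}$) I would simply invoke Theorem~\ref{HPExt} with Proposition~\ref{DefiningPhi}(4): $\int_{\Q_i}|Dg_i|\le C\diam(\Q_i)\,|D_\tau\phi|(\partial\Q_i)\le C\,|Df|(2Q_i)$.

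Next I would sum over $i$, using that $\{\Q_i\}$ tiles $Q(0,1)$ with $|Df|(\Gamma)=0$ (admissibility), that $\{2Q_i\},\{4Q_i\}$ have bounded overlap, and — for $K$ large — absolute continuity of $|D^af|$ to discard the thin strips by which $\bigcup\Q_i$ pokes out of $\bigcup Q_i$. Since $\bigcup_E\Q_i$ carries only $O(\epsilon)\,|D^af|(Q(0,1))$ and at most all of $|D^sf|$ once, the $E$-squares contribute at most $|D^sf|(Q(0,1))+C\epsilon\bigl(|Df|(Q(0,1))+1\bigr)$. Writing $D^af=\nabla f\,\L^2=\sum_i\nabla f\,\L^2\llcorner\Q_i$ and bounding $\int_{\Q_i}|\nabla f-\nabla f(w_i)|\le\epsilon\alpha^2 2^{-2K}$ on $G\cup T$-squares via \eqref{Scorpions}, these contribute at most $|D^af|(Q(0,1))+C\epsilon\bigl(|Df|(Q(0,1))+1\bigr)$. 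By \eqref{Judy}, by $|D^sf|(2Q_i)\le\epsilon|D^af|(Q_i)$ on the squares defining $W$, and by parts (1)--(3) of Theorem~\ref{CEZ} together with $2Q_i\cap\tilde F_\epsilon=\emptyset$ whenever $Q_i\notin F_{\epsilon,K}$, the remaining squares contribute at most $C\epsilon\bigl(|Df|(Q(0,1))+1\bigr)$. Hence $|Df_{\epsilon,\alpha}|(Q(0,1))\le|Df|(Q(0,1))+C\epsilon\bigl(|Df|(Q(0,1))+1\bigr)$ once $K$ is large enough. A similar but easier estimate — controlling $\osc(f,\cdot)$ on $G\cup T$-squares and using $\L^2(\tilde F_\epsilon)$ small (Lemma~\ref{Isolationism}) together with $\L^2(\Q_i)\,\diam(\P_i)\le C\,2^{-K}\,|Df|(2Q_i)$ on all remaining squares — gives $\|f_{\epsilon,\alpha}-f\|_{L^1(Q(0,1))}\le C\epsilon\bigl(|Df|(Q(0,1))+1\bigr)$ for $K$ large.

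Finally I would diagonalize: take $\epsilon_k\downarrow0$, then $\alpha_k\downarrow0$ and $K_k=K(\epsilon_k,\alpha_k)$ large enough for the above, and set $f_k:=f_{\epsilon_k,\alpha_k}$. Then $f_k\to f$ in $L^1$, $\limsup_k|Df_k|(Q(0,1))\le|Df|(Q(0,1))$, and lower semicontinuity of the total variation under $L^1$-convergence gives the reverse inequality, so $f_k\to f$ strictly. For area-strictness put $\nu_k:=Df_k\llcorner\bigl(\bigcup_{Q_i\in E_{\epsilon_k,K_k}}\Q_i\bigr)$ and $\mu_k:=Df_k-\nu_k$. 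The $E$-estimate gives $|\nu_k|(Q(0,1))\le(1+C\epsilon_k)|D^sf|(Q(0,1))+C\epsilon_k$, while the $G,T,W,F\setminus E$ estimates together with the tiling decomposition $D^af=\sum_i\nabla f\,\L^2\llcorner\Q_i$ give $|\mu_k-D^af|(Q(0,1))\le C\epsilon_k\bigl(|Df|(Q(0,1))+1\bigr)\to0$; since $|\mu_k|+|\nu_k|=|Df_k|\to|D^af|+|D^sf|=|Df|$, also $|\nu_k|(Q(0,1))\to|D^sf|(Q(0,1))$. Hence $f_k\to f$ area-strictly. I expect the real difficulty to lie exactly here: forcing $|\mu_k-D^af|(Q(0,1))\to0$ in the total-variation norm (not merely weakly) is what requires the componentwise, rank-one-aware approximation of the singular part — Theorem~\ref{Componentwise Extension}, the Lebesgue-point selection \eqref{Piano}, and the guideline grid of Proposition~\ref{DefiningPhi} — and the decomposition of $D^af$ along $\{\Q_i\}$ rather than $\{Q_i\}$, so the mismatch $\Q_i\triangle Q_i$ never enters the a.c.\ bound; a secondary source of care is checking the precise hypotheses of Theorem~\ref{NullExt} on $T$-squares and the nondegeneracy of $\P_i$ on $G$-squares, both of which hinge on taking $\alpha$ small in terms of $\alpha_0$, together with controlling the overlap of the quadrilaterals that straddle the interface between $E$- and non-$E$-squares.
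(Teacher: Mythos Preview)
Your proposal is correct and follows essentially the same route as the paper: the same case-by-case extension of $\phi$ on the quadrilaterals $\Q_i$ (triangulation on $G$-squares, Theorem~\ref{NullExt} on $T$-squares, Theorem~\ref{Componentwise Extension} on $E$-squares, Theorem~\ref{HPExt} on the rest), and the same area-strict decomposition $\mu_k=Dg\chi_\Omega$, $\nu_k=Dg\chi_{Q(0,1)\setminus\Omega}$ with $\Omega=Q(0,1)\setminus\bigcup_{E}\Q_i$. Your final step (first obtain strict convergence via lower semicontinuity, then deduce $|\nu_k|\to|D^sf|$ from $|\mu_k-D^af|\to0$ and $|\mu_k|+|\nu_k|=|Df_k|$) is a slight repackaging of the paper's direct estimate $|Dg|(Q(0,1)\setminus\Omega)\le(1+C\epsilon)|D^sf|(Q(0,1))+C\epsilon$, but the content is the same.
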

	\begin{proof}
		Given $\epsilon >0$ we get a good non-straight grid for $f$ called $\Gamma$ and a mapping $\phi$ from Proposition~\ref{DefiningPhi}.
		
		\step{1}{Extend $\phi$ to get a homeomorphism}{MS1}
		
		Let us start by extending $\phi$ on the `good' quadrilaterals, i.e. on $\Q_i$ such that $Q_i \in G_{\epsilon,\alpha,K}$. We have that $\phi$ is linear on each side of $\Q_i$. Further by the choice of $X,X'$, the vertices of $\Q_i$, (specifically \eqref{InZ}) we have
		\begin{equation}\label{Noice}
			|f(X) - f(X') - \nabla f(w_i)(X - X')|< \alpha^4 2^{-K}.
		\end{equation}
		Recall that we have $\alpha\leq |\nabla f(w_i)|\leq \alpha^{-1}, \ \alpha< \det \nabla f(w_i)$, which implies that $|\nabla f(w_i)v| \geq \alpha^2$ for all $|v|=1$. Therefore, since $|X-X'| \geq 2^{-K-1}$, we have $|\nabla f(w_i)(X - X')| \geq 2^{-K-1}\alpha^2$. By Proposition~\ref{DefiningPhi}, step~\ref{CoG} we have that $\Q_i$ is convex and so also is $\nabla f(w_i)\Q_i$. 
		Therefore \eqref{Noice} guarantees that $\phi(\partial \Q_i)$ is a convex quadrilateral. Then we can choose any pair of opposing corners of $\Q_i$, call them $X$ and $X'$. We define $\phi$ as linear continuous on the segment $[XX']$. Then $\Q_i$ is composed of two triangles and $\phi$ is continuous and linear on each side of each triangle and therefore $\phi$ extends to an affine map on each triangle, which we call $g$. By the convexity of $\phi(\partial \Q_i)$ we get the injectivity of the map $g$ on each $\Q_i$.
		
		On the quadrilaterals $\Q_i$ such that $Q_i\in T_{\epsilon, \alpha, K}$ we apply Theorem~\ref{NullExt}. For each quadrilateral $\Q_i$ such that $Q_i\in E_{\epsilon, K}$ we find a $\theta_i$ such that $v_i = (\cos\theta_i, \sin\theta_i)$ and we apply Theorem~\ref{Componentwise Extension} with the parameter $\epsilon$ from Theorem~\ref{Componentwise Extension} $\epsilon2^{-2K}$. On the other quadrilaterals, i.e. $\Q_i$ such that $Q_i\in W_{\epsilon, \alpha, K} \cup [F_{\epsilon, K} \setminus E_{\epsilon, K}]$ we apply Theorem~\ref{HPExt}.
		
		The map $g$ is injective on $Q(0,1)$ because it is injective on each $\Q_i$, because $g = \phi$ on $\Gamma$ and because $\phi$ is injective on $\Gamma$.
		
		\step{2}{Convergence estimates}{MS2}
		
		It is not hard to observe the $L^1$ convergence. Since $\|g\|_{\infty}<2$ and the set $\L^2(\tilde{F}_{\epsilon, K}\cup \tilde{P}_{\alpha_0})<\delta < \epsilon$ it remains to consider the set $Q(0,1) \setminus [\tilde{F}_{\epsilon, K} \cup \tilde{P}_{\alpha_0}]$. On each of the $\Q_i$  we use \eqref{Scorpions} and the Poincar\'e inequality to get
		$$
			\|f-g\|_{L^1(\Q_i)} \leq C2^{-3K} \alpha_0^{-1}
		$$
		and since $2^{-K} < \epsilon \alpha_0$ we get that $\|f-g\|_{1}<C\epsilon$.
		
		Let us call $\Omega = Q(0,1)\setminus \bigcup_{Q_i\in E_{\epsilon,K}}\Q_i$. We want to estimate $\|Dg\chi_{\Omega} - D^af\|_{L^1(Q(0,1))}$. For $\L^2$-almost all points of $\Q_i$ with $Q_i\in G_{\epsilon, \alpha, K}$ we have $|Dg - \nabla f(w_i)| \leq 2\alpha^4$ by \eqref{Noice} and we chose $\alpha < \epsilon$. Therefore using \eqref{Scorpions} we have
		$$
			\int_{\Q_i} |Dg - D^af| \leq 3\epsilon^3 \L^2(\Q_i).
		$$
		Summing this over $Q_i\in G_{\epsilon,\alpha, K}$ we bound the sum by $\epsilon$.
		
		Now we calculate on $\Q_i$ for $Q_i\in T_{\epsilon,\alpha, K}$. Summing the estimate from Theorem~\ref{NullExt} using the estimate from \eqref{TropicalWood} we get 
		$$
			\sum_i \int_{\Q_i} |Dg - D^af| \leq C\epsilon |Df|(Q).
		$$
		
		By the definition of $W_{\epsilon,\alpha, K}$ and $F_{\epsilon, K} \setminus E_{\epsilon, K}$ we have that
		$$
			|Df|\left(\bigcup_{Q_i \in W_{\epsilon,\alpha, K} \cup [F_{\epsilon, K} \setminus E_{\epsilon, K}]} \Q_i\right) \leq C\epsilon.
		$$
		On each $\Q_i$ such that $Q_i\in W_{\epsilon, \alpha, K} \cup [F_{\epsilon, K} \setminus E_{\epsilon, K}]$ we have
		$$
			|Dg|\bigg(\bigcup_{Q_i \in W_{\epsilon,\alpha, K} \cup [F_{\epsilon, K} \setminus E_{\epsilon, K}]}\Q_i\bigg)\leq C|Df|\bigg(\bigcup_{Q_i \in W_{\epsilon,\alpha, K} \cup [F_{\epsilon, K} \setminus E_{\epsilon, K}]} \Q_i\bigg) \leq C\epsilon.
		$$
		Therefore, recalling that $\|D^a f\|_{L^1(Q(0,1)\setminus\Omega)} < \epsilon |D^af|(Q(0,1))$ by Lemma~\ref{Isolationism}, we conclude that
		$$
			\|(Dg)\chi_{\Omega} - D^af\|_{L^1(Q(0,1))} < C\epsilon.
		$$
		
		It remains to prove that $\|Dg\|_{L^1(Q(0,1)\setminus \Omega)} \leq |D^sf|(Q(0,1)) + C\epsilon$ in order to prove the area strict convergence. In the following we use the fact that
		$$
			|Dg|(\Q_i) \leq |\langle Dg, v_i\rangle|(\Q_i) + |\langle Dg, v_i^{\bot}\rangle|(\Q_i).
		$$
		On the set in question we defined $g$ using Theorem~\ref{Componentwise Extension} and thanks to \eqref{SingEst1}, \eqref{SingEst2} and Theorem~\ref{CEZ}, points 2) and 3) we have
		$$
			\begin{aligned}
				|Dg|(Q(0,1)\setminus \Omega) & \leq \sum_{Q_i\in E_{\epsilon,K}} |\langle Dg, v_i\rangle|(\Q_i) + |\langle Dg, v_i^{\bot}\rangle|(\Q_i)\\
				&\leq \sum_{Q_i\in E_{\epsilon,K}}(1+\epsilon)|Df|(\Q_i) + C\epsilon 2^{-2K} + C\epsilon |D^sf|(4Q_i \cap S ) \\
				&\leq (1+C\epsilon)|D^sf|(Q(0,1)) + C\epsilon
			\end{aligned}
		$$
		because $\sum_i \chi_{4Q_i} \leq 25$.
	\end{proof}

	\begin{proof}[Proof of Theorem~\ref{main}]
		The equivalence of points 1) and 2) is in Theorem~\ref{ItsAllLies}. The implication 2) implies 3) is Theorem~\ref{TheMeat}. Trivially 3) implies 4). The equivalence of 4) and 1) was proved in \cite{CKR}.
	\end{proof}

\end{document}